\def\titlerunning#1{\gdef\titrun{#1}}
\def\author#1{\gdef\autrun{\def\and{\unskip, }#1}\gdef\@author{#1}}
\def\keywords#1{\par\medskip
\noindent\textbf{Keywords.} #1}
\def\subjclass#1{\par\smallskip
\noindent\textbf{MSC (2010):} #1}
\newtheorem{thm}{Theorem}[section]
\newtheorem{lem}[thm]{Lemma}
\newtheorem{prop}[thm]{Proposition}
\theoremstyle{definition}
\newtheorem{rem}[thm]{Remark}
\newtheorem{exa}[thm]{Example}
\numberwithin{equation}{section}
\newtheorem*{notations}{Notations}
\DeclareMathOperator*{\esssup}{ess\,sup}
\DeclareMathOperator*{\essinf}{ess\,inf}
\begin{document}

\baselineskip=17pt

\titlerunning{Compactness and existence in weighted Sobolev spaces, I}

\title{Compactness and existence results in weighted Sobolev spaces of radial functions, Part I: Compactness}

\author{
Marino Badiale\thanks{Partially supported by the PRIN2009 grant ``Critical Point Theory and
Perturbative Methods for Nonlinear Differential Equations''},
\ \ Michela Guida,
\ \ Sergio Rolando\footnotemark[1]}

\date{
\begin{footnotesize}
\emph{
Dipartimento di Matematica ``Giuseppe Peano'' \smallskip\\
Universit\`{a} degli Studi di Torino, Via Carlo Alberto 10, 10123 Torino, Italy \\
e-mail:} marino.badiale@unito.it, michela.guida@unito.it, sergio.rolando@unito.it
\end{footnotesize}
}
\maketitle

\begin{abstract}
Given two measurable functions $V\left(r \right)\geq 0$ and $K\left(r\right)> 0$, $r>0$,
we define the weighted spaces 
\[
H_{V}^{1}=\left\{ u\in D^{1,2}(\mathbb{R}^{N}):\int_{\mathbb{R}^{N}}V\left( \left|
x\right| \right) u^{2}dx<\infty \right\} ,\quad L_{K}^{q}=L^{q}(\mathbb{R}%
^{N},K\left( \left| x\right| \right) dx)
\]
and study the compact embeddings of the radial subspace of $H_{V}^{1}$ into $%
L_{K}^{q_{1}}+L_{K}^{q_{2}}$, and thus into $L_{K}^{q}$ ($%
=L_{K}^{q}+L_{K}^{q}$) as a particular case. 
Both super- and sub-quadratic exponents $q_{1}$, $q_{2}$ and $q$ are considered.
Our results do not require any compatibility between how the potentials $V$ and $K$ behave at the origin and at infinity, and 
essentially rely on power type estimates of their relative growth, not of the potentials separately.
%Our results concern both super- and sub-quadratic exponents $q_{1}$, $q_{2}$ and $q$, and essentially rely on power type estimates of the relative growth of the potentials $V$ and $K$ at the origin and at infinity.
Applications to existence results for nonlinear elliptic problems like 
\[
-\triangle u+V\left( \left| x\right| \right) u=f\left( \left| x\right|
,u\right) \quad \text{in }\mathbb{R}^{N},\quad u\in H_{V}^{1},
\]
will be given in a forthcoming paper.

\keywords{Weighted Sobolev spaces, compact embeddings, unbounded or decaying potentials}
\subjclass{Primary 46E35; Secondary 46E30, 35J60, 35J20, 35J05}
\end{abstract}

\section{Introduction}

Consider the nonlinear elliptic radial equation 
\begin{equation}
-\triangle u+V\left( \left| x\right| \right) u=K\left( \left| x\right|
\right) f\left( u\right) \quad \text{in }\mathbb{R}^{N},  \label{EQ}
\end{equation}
where $N\geq 3$, $f:\mathbb{R}\rightarrow \mathbb{R}$ is a continuous nonlinearity
satisfying $f\left( 0\right) =0$ and $V\geq 0,K>0$ are given potentials. The
motivation of this paper is concerned with the problem of the existence of
non-zero non-negative solutions to equation (\ref{EQ}) in the following weak
sense: we say that $u\in H_{V}^{1}$ is a \textit{weak solution}\emph{\ }to (%
\ref{EQ}) if 
\begin{equation}
\int_{\mathbb{R}^{N}}\nabla u\cdot \nabla h\,dx+\int_{\mathbb{R}^{N}}V\left(
\left| x\right| \right) uh\,dx=\int_{\mathbb{R}^{N}}K\left( \left| x\right|
\right) f\left( u\right) h\,dx\quad \text{for all }h\in H_{V}^{1},
\label{weak solution}
\end{equation}
where 
\begin{equation}
H_{V}^{1}:=H_{V}^{1}\left( \mathbb{R}^{N}\right) :=\left\{ u\in D^{1,2}\left( 
\mathbb{R}^{N}\right) :\int_{\mathbb{R}^{N}}V\left( \left| x\right| \right)
u^{2}dx<\infty \right\}  \label{H:=}
\end{equation}
is the energy space associated to the linear part of the equation, which is
a Hilbert space with respect to the following inner product and related
norm: 
\[
\left( u\mid v\right) :=\int_{\mathbb{R}^{N}}\left( \nabla u\cdot \nabla
v+V\left( \left| x\right| \right) uv\right) dx,\quad
\left\| u\right\|^2:=\int_{\mathbb{R}^{N}}\left(\left| \nabla u\right| ^{2}
+V\left( \left| x\right| \right) u^{2}\right)dx.
\]
By well known arguments, such solutions lead to special solutions (\textit{%
solitary waves} and \textit{solitons}) for several nonlinear classical field
theories, such as Schr\"{o}dinger and Klein-Gordon equations (see e.g. \cite
{Be09,YangY, BBR1}). In this respect, since the early studies of \cite
{Beres-Lions,Strauss,Floer-Wein,Rabi92}, equation (\ref{EQ}) has been
massively addressed in the mathematical literature, recently focusing on the
case of $V$ possibly vanishing at infinity, i.e., 
$\liminf_{\left|x\right| \rightarrow \infty }V\left( \left| x\right| \right) =0$ 
(some first results on such a case can be found in \cite
{Ambr-Fel-Malch,BR,Be-Gr-Mic,Be-Gr-Mic 2}; for more recent references, see
e.g. the bibliography of \cite{Alves-Souto-13,SuTian12}).

The natural approach in studying equation (\ref{EQ}) is variational, since
its weak solutions are (at least formally) critical points of the Euler
functional 
\begin{equation}
I\left( u\right) :=\frac{1}{2}\left\| u\right\| ^{2}-\int_{\mathbb{R}%
^{N}}K\left( \left| x\right| \right) F\left( u\right) dx,  \label{I:=}
\end{equation}
where $F\left( t\right) :=\int_{0}^{t}f\left( s\right) ds$. Then the problem
of existence is easily solved if $V$ does not vanish at infinity and $K$ is
bounded. Indeed, in this case, introducing the weighted Lebesgue space 
\begin{equation}
L_{K}^{q}:=L_{K}^{q}\left( \mathbb{R}^{N}\right) :=L^{q}\left( \mathbb{R}%
^{N},K\left( \left| x\right| \right) dx\right) ,  \label{L_K:=}
\end{equation}
one easily sees that the embeddings $H_{V}^{1}\hookrightarrow H^{1}(\mathbb{R}%
^{N})$ and $L^{q}(\mathbb{R}^{N})\hookrightarrow L_{K}^{q}$ are continuous, so
that, by the well known theory of $H^{1}(\mathbb{R}^{N})$, the embedding 
\[
H_{V}^{1}\left( \mathbb{R}^{N}\right) \hookrightarrow L_{K}^{q}\left( \mathbb{R}%
^{N}\right) ,\qquad 2<q<2^{*}:=\frac{2N}{N-2}, 
\]
is continuous and becomes compact if restricted to the radial subspace of $%
H_{V}^{1}$, namely 
\begin{equation}
H_{V,\mathrm{r}}^{1}:=H_{V,\mathrm{r}}^{1}\left( \mathbb{R}^{N}\right) :=%
\overline{\left\{ u\in C_{\mathrm{c,rad}}^{\infty }\left( \mathbb{R}^{N}\right)
:\int_{\mathbb{R}^{N}}V\left( \left| x\right| \right) u^{2}dx<\infty \right\} }%
^{\ H_{V}^{1}(\mathbb{R}^{N})}.  \label{Hr:=}
\end{equation}
Therefore, assuming that $f$ grows as a super-linear and subcritical power,
the functional $I$ is of class $C^{1}$ on $H_{V}^{1}$ (because its
non-quadratic part is of class $C^{1}$ on $L_{K}^{q}$) and, under some
additional conditions on the nonlinearity, by-now almost standard in the
literature, the restriction of $I$ to $H_{V,\mathrm{r}}^{1}$ has a
mountain-pass geometry and satifies the Palais-Smale condition, so that it
has a non-zero critical point by the Mountain-Pass Theorem \cite{Ambr-Rab}.
Such a critical point is actually a critical point of $I$, i.e., a weak
solution of (\ref{EQ}) in the sense of (\ref{weak solution}), thanks to the
Palais' Principle of Symmetric Criticality \cite{Palais}, which applies
since $I$ is rotationally invariant and of class $C^{1}$ on $H_{V}^{1}$.

If $V$ vanishes at infinity, the space $H_{V}^{1}$ is no more necessarily
contained in $L^{2}(\mathbb{R}^{N})$ and thus the embedding properties of $%
H^{1}(\mathbb{R}^{N})$ become useless, so that the above scheme fails in
essentially two points: the compactness properties of the radial subspace of 
$H^{1}(\mathbb{R}^{N})$ are no more avaliable and a growth condition of the
form $\left| F\left( u\right) \right| \leq \left( \mathrm{const.}\right)
\left| u\right| ^{q}$, $q\neq 2^{*}$, does not ensure the differentiability,
not even the finiteness, of $I$ on $H_{V}^{1}$. A possible way of saving the
scheme, then, is to face and solve the following problems:

\begin{itemize}
\item[(i)]  find a compact embedding of $H_{V,\mathrm{r}}^{1}$ into a space $%
X$ where the non-quadratic part of $I$ is of class $C^{1}$;

\item[(ii)]  prove a simmetric criticality type result, ensuring that the
critical points of $I_{\mid H_{V,\mathrm{r}}^{1}}$ are weak solutions of
equation (\ref{EQ}) in the sense of definition (\ref{weak solution}).
\end{itemize}

Problem (i) has been largely investigated in the literature for $X=L_{K}^{q}$
and, as far as we know, the more general results in this direction are the
ones recently obtained by Su, Wang and Willem \cite{Su-Wang-Will p}, Su and
Tian \cite{SuTian12}, and Bonheure and Mercuri \cite{BonMerc11} (for older
results, see the references in \cite{Su-Wang-Will p,SuTian12}; for related
results without symmetry assumptions, see \cite
{Alves-Souto-13,Bon-VanSchaft-10} and the references therein). In
particular, \cite{Su-Wang-Will p}, \cite{SuTian12} and \cite{BonMerc11}
respectively concern the cases $q\geq 2$, $1<q<2$ and $q>2$. We also observe
that \cite{Su-Wang-Will p,SuTian12} actually deal with the more general case
of Banach energy spaces $W_{V,\mathrm{r}}^{1,p}$, $1<p<N$, which we do not
study here. The spirit of the results of \cite{Su-Wang-Will p,SuTian12} (for 
$p=2$) is essentially the following: assuming that $V,K$ are continuous and
satisfy power type estimates of the form: 
\begin{equation}
\liminf_{r\rightarrow 0^{+}}\frac{V\left( r\right) }{r^{a_{0}}}>0,\quad
\liminf_{r\rightarrow +\infty }\frac{V\left( r\right) }{r^{a}}>0,\quad
\limsup_{r\rightarrow 0^{+}}\frac{K\left( r\right) }{r^{b_{0}}}<\infty
,\quad \limsup_{r\rightarrow +\infty }\frac{K\left( r\right) }{r^{b}}<\infty
,  \label{pow-estim}
\end{equation}
the authors find two limit exponents $\underline{q}=\underline{q}\left(
a,b\right) $ and $\overline{q}=\overline{q}\left( a_{0},b_{0}\right) $ such
that the embedding $H_{V,\mathrm{r}}^{1}\hookrightarrow L_{K}^{q}$ is
compact if $\underline{q}<q<\overline{q}$. The exponent $\underline{q}$ is
always defined, while $\overline{q}$ exists provided that suitable
compatibility conditions between $a_{0}$ and $b_{0}$ occur. Moreover, the
condition $\underline{q}<q<\overline{q}$ also asks for $\underline{q}<%
\overline{q}$, which is a further assumption: a compatibility is also
required between the behaviours of the potentials at zero and at infinity.
These results are extended in \cite{BonMerc11} by replacing (\ref{pow-estim}%
) with estimates on $V$ and $K$ in terms of a much wider class of comparison
functions than the powers of $r$ (the so-called \textit{Hardy-Dieudonn\'{e}
comparison class}). A price to pay for such a generality is that the compact
embedding $H_{V,\mathrm{r}}^{1}\hookrightarrow L_{K}^{q}$ is no more ensured
for a range of exponents $q$, but under assumptions which join $q$ and the
comparison functions together.

Motivated by a wide recent use of the sum of weighted Lebesgue spaces in
dealing with nonlinear problems (see the references in \cite{BPR}), the case 
$X=L_{K}^{q_{1}}+L_{K}^{q_{2}}$ has been considered in \cite{BPR},
generalizing a result of \cite{Be-F.2} and studying the compactness of the
embedding of $H_{V,\mathrm{r}}^{1}$ (and of $W_{V,\mathrm{r}}^{1,p}$, $1<p<N$%
) into $L_{K}^{q_{1}}+L_{K}^{q_{2}}$ for $V=0$ and $K$ satisfying power type
estimates from above at zero and infinity (see Example \ref{EX: BPR} below).

Here we investigate problem (i) for $X=L_{K}^{q_{1}}+L_{K}^{q_{2}}$ with $%
q_{1}$ and $q_{2}$ not necessarily different, and thus for $X=L_{K}^{q}$ ($%
=L_{K}^{q}+L_{K}^{q}$) as a particular case. Some properties of the $%
L_{K}^{q_{1}}+L_{K}^{q_{2}}$ spaces will be recalled in Section \ref{SEC:1}.
We consider $q_{1},q_{2}\in \left( 1,+\infty \right) $, so that both super-
and sub-quadratic cases are covered. Our embedding results are given by the
combination of Theorem \ref{THM(cpt)} with Theorems \ref{THM0}, \ref{THM1}, 
\ref{THM2} and \ref{THM3}, which give sufficient conditions in order to
apply Theorem \ref{THM(cpt)}. The spirit of the results is essentially the
following: assuming that the relative growth of the potentials satisfies
power type estimates of the form 
\[
%\limfunc{esssup}_
\esssup_{r\ll 1}\frac{K\left( r\right) }{r^{\alpha _{0}}V\left(r\right) ^{\beta _{0}}}<+\infty ,\quad 
\esssup_{r\gg 1}\frac{K\left( r\right) }{r^{\alpha _{\infty }}V\left( r\right) ^{\beta _{\infty }}}%
<+\infty ,\quad \beta _{0},\beta _{\infty }\in \left[ 0,1\right] , 
\]
we find two open intervals, say $\mathcal{I}_{1}=\mathcal{I}_{1}\left(
\alpha _{0},\beta _{0}\right) $ and $\mathcal{I}_{2}=\mathcal{I}_{2}\left(
\alpha _{\infty },\beta _{\infty }\right) $, the one depending on the
behaviour of $V$ and $K$ at the origin, the other on their behaviour at
infinity, such that the embedding of $H_{V,\mathrm{r}}^{1}$ into $%
L_{K}^{q_{1}}+L_{K}^{q_{2}}$ is compact for $q_{1}\in \mathcal{I}_{1}$ and $%
q_{2}\in \mathcal{I}_{2}$. These intervals are independent from one another
and may not intersect; if they do, a compact embedding of $H_{V,\mathrm{r}%
}^{1}$ into $L_{K}^{q}$ ensues, by taking $q_{1}=q_{2}=q$.

The compactness results we present here generalize the ones of \cite
{Su-Wang-Will p,SuTian12,BPR} (for $p=2$) and are complementary to the ones
of \cite{BonMerc11}. The main novelties concern the decaying rates allowed
for the potentials and the independence between their behaviours at the
origin and at infinity.

As to the first issue, we do not require \textit{separate} estimates on $V$
and $K$, but only on their relative growth, so that potentials which do not
exhibit a power like behaviour, as prescribed in \cite{Su-Wang-Will
p,SuTian12}, are permitted (see Examples \ref{EX(nnP1)} and \ref{EX(nnP2)}).
Moreover, unlike in \cite{Su-Wang-Will p,SuTian12,BonMerc11}, we also allow
that $V$ vanishes identically in a neighbourhood of zero, or of infinity, or
both (see Remark \ref{RMK: suff12}.\ref{RMK: suff12-V^0} and Examples \ref
{EX: BPR} and \ref{EX(nnP2)}). On the other hand, many of the potentials
considered in \cite{BonMerc11} do not fall into the class studied here,
since they give rise to a ratio $K/V^{\beta }$ which behaves as a general
Hardy-Dieudonn\'{e} function and therefore cannot be estimated by a\emph{\ }%
power of $r$. However, we think that our arguments can be extended in order
to estimate $K/V^{\beta }$ by means of a wider class of functions than the
powers of $r$, such as, indeed, the Hardy-Dieudonn\'{e} comparison functions.

As far as the second issue is concerned, we avoid any compatibility
requirement between how the potentials behave at the origin and at infinity,
since the use of $L_{K}^{q_{1}}+L_{K}^{q_{2}}$ spaces, not simply of $%
L_{K}^{q}$, leads to the already mentioned independent intervals $\mathcal{I}%
_{1},\mathcal{I}_{2}$ for the exponents $q_{1},q_{2}$. This also provides
new compact embeddings for potentials which belong to the classes considered
in \cite{Su-Wang-Will p,SuTian12,BonMerc11} but escape their results, for
instance because $\underline{q}\geq \overline{q}$ (see Examples \ref{EX(ST)}%
, \ref{EX(SWW)} and \ref{EX(nnP1)}).

Besides these general considerations, it is worth observing that we also
improve the compact embeddings of \cite{SuTian12} (for $p=2$), i.e., $H_{V,%
\mathrm{r}}^{1}\hookrightarrow L_{K}^{q}$ for $q$ sub-quadratic, in three
further respects: for potentials satisfying the same assumptions of \cite{SuTian12},
we find that the embedding is compact for a wider interval of exponents than the
range $\underline{q}<q<\overline{q}$ obtained in \cite{SuTian12}, as well as
for cases in which $\underline{q}$ and $\overline{q}$ are not defined, or
are such that $\underline{q}\geq \overline{q}$. This is thoroughly
highlighted in Example \ref{EX(ST)}, even though in a particular case.

A precise comparison with the compactness result of \cite{BPR} (for $p=2$)
will be given in Example \ref{EX: BPR}.

As to problem (ii), it has been recently treated in \cite{BRpow} and \cite
{BPR} for particular potentials, respectively concerning nonlinearities
satisfying a single-power and a double-power growth condition (see also \cite
{BGR} for a related cylindrical case). We will study the problem in the
forthcoming paper \cite{BGRnext}, where we will prove a simmetric
criticality type result that contains the ones of \cite{BPR,BRpow} and, as
announced in \cite{BGRnonex}, it also completes the existence results of
other papers (e.g. \cite{SuTian12,Su-Wang-Will 2,Su-Wang-Will p}), where a
solution is found as a critical point of the restriction $I_{\mid H_{V,%
\mathrm{r}}^{1}}$ only.

The forthcoming paper \cite{BGRnext} will be also (and mainly) devoted to
applications of our embedding results to nonlinear elliptic problems like (%
\ref{EQ}). Further applications will be given in \cite{GR-boundedPS}, where
the problem of existence without the \textit{Ambrosetti-Rabinowitz growth
condition} is faced.%\smallskip

This paper is organized as follows. In Section \ref{SEC:MAIN} we state our
main results: a general result concerning the embedding properties of $H_{V,%
\mathrm{r}}^{1}$ into $L_{K}^{q_{1}}+L_{K}^{q_{2}}$ (Theorem \ref{THM(cpt)})
and some explicit conditions ensuring that the embedding is compact
(Theorems \ref{THM0}, \ref{THM1}, \ref{THM2} and \ref{THM3}). The general
result is proved in Section \ref{SEC:1}, the explicit conditions in Section 
\ref{SEC:2}. In Section \ref{SUB: es} we apply our results to some examples,
with a view to both illustrate how to use them in concrete cases and to
compare them with the literature mentioned above. The Appendix is devoted to
some detailed computations, displaced from Section \ref{SEC:2} for sake of
clarity.%\smallskip

\begin{notations}
%\noindent \textbf{Notations. }
We end this introductory section by collecting
some notations used in the paper.%\smallskip

\noindent $\bullet $ For every $R>0$, we set $B_{R}:=\left\{ x\in \mathbb{R}%
^{N}:\left| x\right| <r\right\} $.

\noindent $\bullet $ For any subset $A\subseteq \mathbb{R}^{N}$, we denote $%
A^{c}:=\mathbb{R}^{N}\setminus A$. If $A$ is Lebesgue measurable, $\left|
A\right| $ stands for its measure.

\noindent $\bullet $ $O\left( N\right) $ is the orthogonal group of $\mathbb{R}%
^{N}$.

\noindent $\bullet $ By $\rightarrow $ and $\rightharpoonup $ we
respectively mean \emph{strong} and \emph{weak }convergence.

\noindent $\bullet $ $\hookrightarrow $ denotes \emph{continuous} embeddings.

\noindent $\bullet $ $C_{\mathrm{c}}^{\infty }(\Omega )$ is the space of the
infinitely differentiable real functions with compact support in the open
set $\Omega \subseteq \mathbb{R}^{d}$; $C_{\mathrm{c,rad}}^{\infty }(\mathbb{R}%
^{N})$ is the radial subspace of $C_{\mathrm{c}}^{\infty }(\mathbb{R}^{N})$.

\noindent $\bullet $ If $1\leq p\leq \infty $ then $L^{p}(A)$ and $L_{%
\mathrm{loc}}^{p}(A)$ are the usual real Lebesgue spaces (for any measurable
set $A\subseteq \mathbb{R}^{d}$). If $\rho :A\rightarrow \left( 0,+\infty
\right) $ is a measurable function, then $L^{p}(A,\rho \left( z\right) dz)$
is the real Lebesgue space with respect to the measure $\rho \left( z\right)
dz$ ($dz$ stands for the Lebesgue measure on $\mathbb{R}^{d}$).

\noindent $\bullet $ $p^{\prime }:=p/(p-1)$ is the H\"{o}lder-conjugate
exponent of $p.$

\noindent $\bullet $ $D^{1,2}(\mathbb{R}^{N})=\{u\in L^{2^{*}}(\mathbb{R}%
^{N}):\nabla u\in L^{2}(\mathbb{R}^{N})\}$, $N\geq 3$, is the usual Sobolev
space, which identifies with the completion of $C_{\mathrm{c}}^{\infty }(%
\mathbb{R}^{N})$ with respect to the norm of the gradient; $D_{\mathrm{rad}%
}^{1,2}(\mathbb{R}^{N})$ is the radial subspace of $D^{1,2}(\mathbb{R}^{N})$; $%
D_{0}^{1,2}\left( B_{R}\right) $ is closure of $C_{\mathrm{c}}^{\infty
}\left( B_{R}\right) $ in $D^{1,2}(\mathbb{R}^{N})$.

\noindent $\bullet $ $2^{*}:=2N/\left( N-2\right) $ is the critical exponent
for the Sobolev embedding in dimension $N\geq 3$.

\end{notations}

\section{Main results \label{SEC:MAIN}}

Assume $N\geq 3$ and recall the definitions (\ref{H:=}), (\ref{Hr:=}) and (%
\ref{L_K:=}) of the spaces $H_{V}^{1}$, $H_{V,\mathrm{r}}^{1}$ and $%
L_{K}^{q} $. We will always require that the potentials $V$ and $K$ satisfy
the following basic assumptions:

\begin{itemize}
\item[$\left( \mathbf{V}\right) $]  $V:\left( 0,+\infty \right) \rightarrow
\left[ 0,+\infty \right] $ is a measurable function such that $V\in
L^{1}\left( \left( r_{1},r_{2}\right) \right) $ for some $r_{2}>r_{1}>0;$

\item[$\left( \mathbf{K}\right) $]  $K:\left( 0,+\infty \right) \rightarrow
\left( 0,+\infty \right) $ is a measurable function such that $K\in L_{%
\mathrm{loc}}^{s}\left( \left( 0,+\infty \right) \right) $ for some $s>\frac{%
2N}{N+2}.$
\end{itemize}

\noindent Assumption $\left( \mathbf{V}\right) $ implies that the spaces $%
H_{V}^{1}$ and $H_{V,\mathrm{r}}^{1}$ are nontrivial, while hypothesis $%
\left( \mathbf{K}\right) $ ensures that $H_{V,\mathrm{r}}^{1}$ is compactly
embedded into the weighted Lebesgue space $L_{K}^{q}(B_{R}\setminus B_{r})$
for every $1<q<\infty $ and $R>r>0$ (cf. Lemma \ref{Lem(corone)} below). In
what follows, the summability assumptions in $\left( \mathbf{V}\right) $ and 
$\left( \mathbf{K}\right) $ will not play any other role than this.

Given $V$ and $K$, we define the following functions of $R>0$ and $q>1$: 
\begin{eqnarray}
\mathcal{S}_{0}\left( q,R\right)&:=&
\sup_%\Sb 
{u\in H_{V,\mathrm{r}}^{1},\,%  \\ %
\left\| u\right\| =1  }%\endSb 
\int_{B_{R}}K\left( \left| x\right| \right)
\left| u\right| ^{q}dx,  \label{S_o :=}
\\
\mathcal{S}_{\infty }\left( q,R\right)&:=&
\sup_%\Sb
{u\in H_{V,\mathrm{r}}^{1},\,%  \\ 
\left\| u\right\| =1  }%\endSb 
\int_{\mathbb{R}%
^{N}\setminus B_{R}}K\left( \left| x\right| \right) \left| u\right| ^{q}dx.
\label{S_i :=}
\end{eqnarray}
Clearly $\mathcal{S}_{0}\left( q,\cdot \right) $ is nondecreasing, $\mathcal{%
S}_{\infty }\left( q,\cdot \right) $ is nonincreasing and both of them can
be infinite at some $R$.

Our first result concerns the embedding properties of $H_{V,\mathrm{r}}^{1}$
into $L_{K}^{q_{1}}+L_{K}^{q_{2}}$ and relies on assumptions which are quite
general, sometimes also sharp (see claim (iii)), but not so easy to check.
More handy conditions ensuring these general assumptions will be provided by
the next results. Some recallings on the space $L_{K}^{q_{1}}+L_{K}^{q_{2}}$
will be given in Section \ref{SEC:1}.

\begin{thm}
\label{THM(cpt)}Let $N\geq 3$, let $V$, $K$ be as in $\left( \mathbf{V}%
\right) $, $\left( \mathbf{K}\right) $ and let $q_{1},q_{2}>1$.

\begin{itemize}
\item[(i)]  If 
\begin{equation}
\mathcal{S}_{0}\left( q_{1},R_{1}\right) <\infty \quad \text{and}\quad 
\mathcal{S}_{\infty }\left( q_{2},R_{2}\right) <\infty \quad \text{for some }%
R_{1},R_{2}>0,  
\tag*{$\left( {\cal S}_{q_{1},q_{2}}^{\prime }\right) $}
\end{equation}
then $H_{V,\mathrm{r}}^{1}(\mathbb{R}^{N})$ is continuously embedded into $%
L_{K}^{q_{1}}(\mathbb{R}^{N})+L_{K}^{q_{2}}(\mathbb{R}^{N})$.

\item[(ii)]  If 
\begin{equation}
\lim_{R\rightarrow 0^{+}}\mathcal{S}_{0}\left( q_{1},R\right)
=\lim_{R\rightarrow +\infty }\mathcal{S}_{\infty }\left( q_{2},R\right) =0, 
\tag*{$\left({\cal S}_{q_{1},q_{2}}^{\prime \prime }\right) $}
\end{equation}
then $H_{V,\mathrm{r}}^{1}(\mathbb{R}^{N})$ is compactly embedded into $%
L_{K}^{q_{1}}(\mathbb{R}^{N})+L_{K}^{q_{2}}(\mathbb{R}^{N})$.

\item[(iii)]  If $K\left( \left| \cdot \right| \right) \in L^{1}(B_{1})$ and 
$q_{1}\leq q_{2}$, then conditions $\left( \mathcal{S}_{q_{1},q_{2}}^{\prime
}\right) $ and $\left( \mathcal{S}_{q_{1},q_{2}}^{\prime \prime }\right) $
are also necessary to the above embeddings.
\end{itemize}
\end{thm}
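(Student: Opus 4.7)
For each $u\in H_{V,\mathrm{r}}^1$ I would exhibit a concrete decomposition $u=u_1+u_2$ with $u_1\in L^{q_1}_K$, $u_2\in L^{q_2}_K$, and norms controlled by $\|u\|$. By the monotonicity properties of $\mathcal{S}_0(q_1,\cdot)$ and $\mathcal{S}_\infty(q_2,\cdot)$ noted just after their definition, I may assume $R_1\le R_2$ (otherwise swap the roles), and split $u=u\chi_{B_{R_1}}+u\chi_{B_{R_1}^c}$. The first piece has $L^{q_1}_K$-norm at most $\mathcal{S}_0(q_1,R_1)^{1/q_1}\|u\|$ directly from the definition of $\mathcal{S}_0$. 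For the second piece I would write $\int_{B_{R_1}^c}K|u|^{q_2}dx$ as the sum of the contributions over the annulus $B_{R_2}\setminus B_{R_1}$ (bounded by Lemma~\ref{Lem(corone)}, the continuous embedding of $H_{V,\mathrm{r}}^1$ into $L^{q_2}_K$ on any spherical shell) and over $B_{R_2}^c$ (bounded by $\mathcal{S}_\infty(q_2,R_2)$).

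\textbf{Plan for (ii).} Given a bounded sequence $\{u_n\}\subset H_{V,\mathrm{r}}^1$, I pass to a weakly convergent subsequence $u_n\rightharpoonup u$ by reflexivity, set $M:=\sup_n\|u_n-u\|$, and aim to show $u_n\to u$ strongly in $L^{q_1}_K+L^{q_2}_K$. Given $\varepsilon>0$, I use $(\mathcal{S}''_{q_1,q_2})$ to pick $R_0>0$ small with $\mathcal{S}_0(q_1,R_0)<\varepsilon$ and $R_\infty>R_0$ large with $\mathcal{S}_\infty(q_2,R_\infty)<\varepsilon$, and then decompose
\[
u_n-u=(u_n-u)\chi_{B_{R_0}}+(u_n-u)\chi_{B_{R_\infty}\setminus B_{R_0}}+(u_n-u)\chi_{B_{R_\infty}^c}.
\]
The inner and outer pieces have $L^{q_1}_K$- and $L^{q_2}_K$-norms at most $M\varepsilon^{1/q_1}$ and $M\varepsilon^{1/q_2}$ respectively, while the middle annular piece tends to $0$ in $L^{q_1}_K$ (or $L^{q_2}_K$) as $n\to\infty$ by the compact annular embedding of Lemma~\ref{Lem(corone)}. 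Taking $\limsup_{n\to\infty}$ and then $\varepsilon\to 0^+$ yields the compactness.

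\textbf{Plan for (iii) and the main obstacle.} For the necessity of $\mathcal{S}_0(q_1,R)<\infty$: under $K(|\cdot|)\in L^1(B_1)$ and $q_1\le q_2$, H\"older's inequality on the finite measure $K(|x|)dx$ over $B_1$ gives $L^{q_2}_K(B_1)\hookrightarrow L^{q_1}_K(B_1)$, so the sum space restricted to $B_1$ embeds continuously into $L^{q_1}_K(B_1)$; composing with the hypothesized embedding yields $\mathcal{S}_0(q_1,1)<\infty$. For the necessity of $\mathcal{S}_\infty(q_2,R)<\infty$: the key tool is the Strauss-type radial decay $|u(x)|\le c|x|^{-(N-2)/2}\|u\|$ valid for radial $u\in D^{1,2}(\mathbb{R}^N)$, which forces $|u|\le 1$ uniformly on $B_{R_*}^c$ for $R_*$ large and $\|u\|=1$. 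Combined with the pointwise inequality $|u|^{q_2}\le |u|^{q_1}$ on $B_{R_*}^c$ (from $q_1\le q_2$) and a carefully chosen decomposition in the sum space, this should yield the desired $L^{q_2}_K$-control on $u|_{B_{R_*}^c}$. The vanishing versions of both conditions follow from the continuous-case conclusions by a contradiction argument: a sequence $\{u_n\}$ with $\|u_n\|=1$ violating, say, $\mathcal{S}_0(q_1,R_n)\to 0$ along $R_n\to 0^+$ would, via the strong convergence in $L^{q_1}_K(B_1)$ inherited from the compact embedding into the sum space, contradict dominated convergence on shrinking balls. The main obstacle I anticipate is the necessity of $\mathcal{S}_\infty<\infty$: a naive decomposition fails because $\int_{B_R^c}K$ may be infinite, so one has to combine the radial $L^\infty$-bound with the specific level-set structure of the sum-space norm (in the regime $q_1\le q_2$) to close the estimate without any assumption of global integrability of $K$ outside a compact set.
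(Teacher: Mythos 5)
Your plans for (i) and (ii) follow essentially the same route as the paper: split $\mathbb{R}^N$ into a small ball, an annulus, and the exterior of a large ball, control the inner and outer pieces by $\mathcal{S}_0$ and $\mathcal{S}_\infty$ applied to $u$ (resp.\ to $u_n-u$), and kill the annular piece via Lemma \ref{Lem(corone)} together with the compactness of $D^{1,2}_{\mathrm{rad}}(\mathbb{R}^N)\hookrightarrow L^2_{\mathrm{loc}}(\mathbb{R}^N)$. The only cosmetic differences are that you attach the annulus to the $q_2$-component while the paper attaches it to the $q_1$-component (both are legitimate, since Lemma \ref{Lem(corone)} works for every $1<q<\infty$), and that you get continuity from a direct norm bound on an explicit decomposition rather than from the sequential criterion of Proposition \ref{Prop(->0)}. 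The first half of (iii) is also fine: your H\"older argument on the finite measure $K(|x|)\,dx$ over $B_1$ is exactly the content of Proposition \ref{Prop(L+L)}(i), and the contradiction argument for $\lim_{R\to 0^+}\mathcal{S}_0(q_1,R)=0$ matches the paper's.

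The genuine gap is the $\mathcal{S}_\infty$ half of (iii), and you have correctly located it but not closed it. Knowing that $\|u\|_{L^{q_1}_K+L^{q_2}_K}\le c\|u\|$ and that $|u|\le 1$ on $B_{R_2}^c$ does not by itself give $\int_{B_{R_2}^c}K|u|^{q_2}dx\le C$: the pointwise inequality $|u|^{q_2}\le|u|^{q_1}$ is useless because nothing guarantees $\int_{B_{R_2}^c}K|u|^{q_1}dx<\infty$ (indeed $K$ need not be integrable at infinity), and an arbitrary near-optimal decomposition $u=u_1+u_2$ gives no pointwise control on $u_1$ over the region where $|u|\le 1$. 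What is needed is a quantitative inequality converting the sum-space norm plus the uniform bound $\|u\|_{L^\infty(E)}\le 1$ into an $L^{q_2}_K(E)$ bound; this is precisely Proposition \ref{Prop(L+L)}(ii), i.e.\ inequalities \eqref{PropLL:1}--\eqref{PropLL:2}, which the paper imports from \cite{BPR} and whose proof rests on a H\"older-duality argument (testing $K|u|^{q_2-1}$ against each summand of the decomposition), not merely on the level-set heuristic you invoke. The same tool is needed a second time, in the contradiction argument for $\lim_{R\to+\infty}\mathcal{S}_\infty(q_2,R)=0$: there one must show that $u_n\to u$ in the sum space forces $\int_{B_{R_2}^c}K|u_n-u|^{q_2}dx\to 0$ and that the fixed limit $u$ actually lies in $L^{q_2}_K(B_{R_2}^c)$, both of which the paper obtains from \eqref{PointwiseEstimate} combined with \eqref{PropLL:1}--\eqref{PropLL:2}; your sketch of "dominated convergence on shrinking balls" covers only the $\mathcal{S}_0$ case. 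Until you state and prove an inequality of the type \eqref{PropLL:1}, the necessity of $\mathcal{S}_\infty(q_2,R_2)<\infty$ and of its vanishing remains unproved.
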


Observe that, of course, $(\mathcal{S}_{q_{1},q_{2}}^{\prime \prime })$
implies $(\mathcal{S}_{q_{1},q_{2}}^{\prime })$. Moreover, these assumptions
can hold with $q_{1}=q_{2}=q$ and therefore Theorem \ref{THM(cpt)} also
concerns the embedding properties of $H_{V,\mathrm{r}}^{1}$ into $L_{K}^{q}$%
, $1<q<\infty $.\smallskip

We now look for explicit conditions on $V$ and $K$ implying $(\mathcal{S}%
_{q_{1},q_{2}}^{\prime \prime })$ for some $q_{1}$ and $q_{2}$. More
precisely, we will ensure $(\mathcal{S}_{q_{1},q_{2}}^{\prime \prime })$
through a more stringent condition involving the following functions of $R>0$
and $q>1$: 
\begin{eqnarray}
\mathcal{R}_{0}\left( q,R\right)&:=&
\sup_%\Sb 
{
u\in H_{V,\mathrm{r}}^{1},\,h\in H_{V}^{1},\,%  \\ 
\left\| u\right\| =\left\| h\right\| =1 %\endSb
}%
\,\int_{B_{R}}K\left( \left| x\right| \right) \left| u\right| ^{q-1}\left|
h\right| dx,  \label{N_o} \\
\mathcal{R}_{\infty }\left( q,R\right)&:= &
\sup_%\Sb 
{
u\in H_{V,\mathrm{r}}^{1},\,h\in H_{V}^{1},\, \left\| u\right\| =\left\| h\right\| =1  %\endSb 
}
\,\int_{\mathbb{R}^{N}\setminus B_{R}}K\left( \left| x\right| \right) \left|
u\right| ^{q-1}\left| h\right| dx.  \label{N_i}
\end{eqnarray}
Note that $\mathcal{R}_{0}\left( q,\cdot \right) $ is nondecreasing, $%
\mathcal{R}_{\infty }\left( q,\cdot \right) $ is nonincreasing and both can
be infinite at some $R$. Moreover, for every $\left( q,R\right) $ one has $%
\mathcal{S}_{0}\left( q,R\right) \leq \mathcal{R}_{0}\left( q,R\right) $ and 
$\mathcal{S}_{\infty }\left( q,R\right) \leq \mathcal{R}_{\infty }\left(
q,R\right) $, so that $(\mathcal{S}_{q_{1},q_{2}}^{\prime \prime })$ is a
consequence of the following, stronger condition: 
\begin{equation}
\lim_{R\rightarrow 0^{+}}\mathcal{R}_{0}\left( q_{1},R\right)
=\lim_{R\rightarrow +\infty }\mathcal{R}_{\infty }\left( q_{2},R\right) =0. 
\tag*{$\left( {\cal R}_{q_{1},q_{2}}^{\prime \prime }\right) $}
\end{equation}
In Theorems \ref{THM0} and \ref{THM3} we will find ranges of exponents $%
q_{1} $ such that $\lim_{R\rightarrow 0^{+}}\mathcal{R}_{0}\left(q_{1},R\right)$ $=0$,
while in Theorems \ref{THM1} and \ref{THM2} we will do the same for exponents $q_{2}$ such that
$\lim_{R\rightarrow +\infty}\mathcal{R}_{\infty }\left( q_{2},R\right) =0$.
Condition $(\mathcal{R}_{q_{1},q_{2}}^{\prime \prime })$ 
then follows by joining Theorem \ref{THM0} or \ref{THM3} with Theorem \ref{THM1} or \ref{THM2}.

The reason why we introduce the functions $\mathcal{R}_{0}$ and $\mathcal{R}%
_{\infty }$ is just a matter of future convenience: some of the results of 
\cite{BGRnext} will need assumption $(\mathcal{R}_{q_{1},q_{2}}^{\prime
\prime })$ and therefore we provide, already at this stage, sufficient
conditions in order that $(\mathcal{R}_{q_{1},q_{2}}^{\prime \prime })$
holds. This does not affect our compactness results, since such sufficient
conditions are exactly the same under which our arguments ensure $(\mathcal{S%
}_{q_{1},q_{2}}^{\prime \prime })$.

The fact that Theorem \ref{THM(cpt)} allows the case of $V\left( r\right)=+\infty $
on a positive measure set is for future convenience as well,
since it will be used in \cite{BGRnext} to easily deduce existence results
on bounded or exterior radial domains. Such a generality for $V$ is not so
relevant in the next results, so we will assume hereafter that $V$ is finite
almost everywhere.\smallskip

For $\alpha \in \mathbb{R}$ and $\beta \in \left[ 0,1\right] $, we define two
functions $\alpha ^{*}\left( \beta \right) $ and $q^{*}\left( \alpha ,\beta
\right) $ by setting 
\[
\alpha ^{*}\left( \beta \right) :=\max \left\{ 2\beta -1-\frac{N}{2},-\left(
1-\beta \right) N\right\} =\left\{ 
\begin{array}{ll}
2\beta -1-\frac{N}{2}\quad \smallskip & \text{if }0\leq \beta \leq \frac{1}{2%
} \\ 
-\left( 1-\beta \right) N & \text{if }\frac{1}{2}\leq \beta \leq 1
\end{array}
\right. 
\]
and 
\[
q^{*}\left( \alpha ,\beta \right) :=2\frac{\alpha -2\beta +N}{N-2}. 
\]
Note that $\alpha ^{*}\left( \beta \right) \leq 0$ and $\alpha ^{*}\left(
\beta \right) =0$ if and only if $\beta =1$.

The following Theorems \ref{THM0} and \ref{THM1} only rely on a power type
estimate of the relative growth of the potentials and do not require any
other separate assumption on $V$ and $K$ than $\left( \mathbf{V}\right) $
and $\left( \mathbf{K}\right) $, including the case $V\left( r\right) \equiv
0$ (see Remark \ref{RMK: suff12}.\ref{RMK: suff12-V^0}).

\begin{thm}
\label{THM0}Let $N\geq 3$ and let $V$, $K$ be as in $\left( \mathbf{V}%
\right) $, $\left( \mathbf{K}\right) $ with $V\left( r\right) <+\infty $.
Assume that there exists $R_{1}>0$ such that 
\begin{equation}
\esssup_{r\in \left( 0,R_{1}\right) }\frac{K\left( r\right) }{%
r^{\alpha _{0}}V\left( r\right) ^{\beta _{0}}}<+\infty \quad \text{for some }%
0\leq \beta _{0}\leq 1\text{~and }\alpha _{0}>\alpha ^{*}\left( \beta
_{0}\right) .  \label{esssup in 0}
\end{equation}
Then $\displaystyle \lim_{R\rightarrow 0^{+}}\mathcal{R}_{0}\left(
q_{1},R\right) =0$ for every $q_{1}\in \mathbb{R}$ such that 
\begin{equation}
\max \left\{ 1,2\beta _{0}\right\} <q_{1}<q^{*}\left( \alpha _{0},\beta
_{0}\right) .  \label{th1}
\end{equation}
\end{thm}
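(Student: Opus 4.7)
The plan is to prove the pointwise estimate $\mathcal{R}_{0}(q_{1},R) \le C R^{\mu}$ on $(0, R_{1}]$ with
\[
\mu := \alpha_{0} + N - 2\beta_{0} - \tfrac{N-2}{2}\, q_{1},
\]
so that $\mu > 0$ iff $q_{1} < q^{\ast}(\alpha_{0},\beta_{0})$, and the conclusion follows by letting $R \to 0^{+}$. Under hypothesis (\ref{esssup in 0}), for $R \le R_{1}$ and arbitrary unit-norm $u \in H^{1}_{V,\mathrm{r}}$, $h \in H^{1}_{V}$,
\[
\int_{B_{R}} K(|x|)\, |u|^{q_{1}-1}|h|\,dx \;\le\; C_{0} \int_{B_{R}} |x|^{\alpha_{0}} V(|x|)^{\beta_{0}}\, |u|^{q_{1}-1}|h|\,dx,
\]
so the task reduces to controlling the right-hand side.

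The three key inputs are (i) $\int V u^{2},\, \int V h^{2} \le 1$ from the norm constraints, (ii) the Strauss--Ni radial bound $|u(r)| \le C_{N} r^{-(N-2)/2}$ for radial $u \in D^{1,2}_{\mathrm{rad}}(\mathbb{R}^{N})$ with $\|\nabla u\|_{L^{2}} \le 1$, and (iii) the Sobolev embedding $\|h\|_{L^{2^{\ast}}} \le C_{N}$. I combine them through a three-factor Hölder splitting. Set $\sigma := \max\{0,\,\beta_{0}-\tfrac{1}{2}\}$; then $\sigma \in [0,\beta_{0}]$, and the constraint $\sigma \le (q_{1}-1)/2$---essential below---amounts exactly to $q_{1} > \max\{1, 2\beta_{0}\}$. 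Factor
\[
V^{\beta_{0}}\, |u|^{q_{1}-1}|h| \;=\; (Vu^{2})^{\sigma}\,(Vh^{2})^{\beta_{0}-\sigma}\,|u|^{a}|h|^{b},\qquad a := q_{1}-1-2\sigma,\ b := 1-2(\beta_{0}-\sigma),
\]
with $a, b \ge 0$ by construction.

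For $\beta_{0}\in(0,1)$, three-factor Hölder with exponents $1/\sigma$, $1/(\beta_{0}-\sigma)$, $1/(1-\beta_{0})$ (summing to $1$) reduces the estimate to
\[
\mathcal{J} := \int_{B_{R}} \bigl(r^{\alpha_{0}}\,|u|^{a}|h|^{b}\bigr)^{\tau}\, dx,\qquad \tau := 1/(1-\beta_{0}),
\]
because the first two Hölder factors contribute $(\int Vu^{2})^{\sigma}(\int Vh^{2})^{\beta_{0}-\sigma} \le 1$. In $\mathcal{J}$, the Strauss bound gives $|u|^{a\tau} \le C\, r^{-(N-2)a\tau/2}$; Hölder applied to the residual $r^{\rho}|h|^{b\tau}$, paired with Sobolev (the chosen $\sigma$ yields $b\tau \le 1 < 2^{\ast}$), then produces $\mathcal{J} \le C\, R^{\alpha_{0}\tau - (N-2)(a+b)\tau/2 + N}$. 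Raising to the power $1-\beta_{0} = 1/\tau$ and using $a+b = q_{1} - 2\beta_{0}$ collapses the exponent of $R$ to precisely $\mu$ above. The boundary cases adapt: for $\beta_{0}=0$ the $V$-factors are trivial and one uses only Hölder and Sobolev; for $\beta_{0}=1$, Cauchy--Schwarz in $L^{2}(V\,dx)$ replaces the three-factor inequality and delivers the same $\mu$.

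The interval $(\max\{1,\,2\beta_{0}\},\, q^{\ast}(\alpha_{0},\beta_{0}))$ is non-empty under $\alpha_{0}>\alpha^{\ast}(\beta_{0})$, via the direct check $q^{\ast}(\alpha^{\ast}(\beta_{0}),\beta_{0}) = \max\{1,\,2\beta_{0}\}$. The main obstacle is the bookkeeping: one must verify simultaneous admissibility of all Hölder exponents (notably the Sobolev compatibility $b\tau \le 2^{\ast}$, which motivates the specific choice $\sigma = \max\{0,\,\beta_{0}-\tfrac{1}{2}\}$) and confirm that the successive $R$-exponents combine to exactly the clean expression for $\mu$. The rest is mechanical.
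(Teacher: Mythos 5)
Your proposal is correct and follows essentially the same route as the paper: the paper packages the three\--factor H\"older splitting of $V^{\beta_{0}}$ (between $\int V u^{2}$, $\int V h^{2}$ and the Sobolev norm of $h$), together with the Ni--Strauss pointwise bound $|u(x)|\leq C_{N}\|u\|\,|x|^{-(N-2)/2}$, into Lemma \ref{Lem(Omega)} with $\Omega=B_{R}$, $\nu=\frac{N-2}{2}$, and then checks that the resulting power of $R$ is exactly your $\mu=\alpha_{0}+N-2\beta_{0}-\frac{N-2}{2}q_{1}>0$. Your unified parametrization $\sigma=\max\{0,\beta_{0}-\tfrac12\}$ just merges the paper's case distinction $\beta_{0}\lessgtr\tfrac12$ into one formula; the computation and all the key inputs are the same.
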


\begin{thm}
\label{THM1}Let $N\geq 3$ and let $V$, $K$ be as in $\left( \mathbf{V}%
\right) $, $\left( \mathbf{K}\right) $ with $V\left( r\right) <+\infty $.
Assume that there exists $R_{2}>0$ such that 
\begin{equation}
\esssup_{r>R_{2}}\frac{K\left( r\right) }{r^{\alpha _{\infty
}}V\left( r\right) ^{\beta _{\infty }}}<+\infty \quad \text{for some }0\leq
\beta _{\infty }\leq 1\text{~and }\alpha _{\infty }\in \mathbb{R}.
\label{esssup all'inf}
\end{equation}
Then $\displaystyle \lim_{R\rightarrow +\infty }\mathcal{R}_{\infty }\left(
q_{2},R\right) =0$ for every $q_{2}\in \mathbb{R}$ such that 
\begin{equation}
q_{2}>\max \left\{ 1,2\beta _{\infty },q^{*}\left( \alpha _{\infty },\beta
_{\infty }\right) \right\} .  \label{th2}
\end{equation}
\end{thm}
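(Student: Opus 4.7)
The plan is to bound, for $R \geq R_2$ and all $u \in H^1_{V,\mathrm{r}}$, $h \in H^1_V$ with $\|u\|=\|h\|=1$, the integral
\[
\int_{|x|>R} K(|x|)\,|u|^{q_2-1}|h|\,dx \leq C_0 \int_{|x|>R} |x|^{\alpha_\infty} V(|x|)^{\beta_\infty} |u|^{q_2-1} |h|\,dx,
\]
using hypothesis~(\ref{esssup all'inf}), and then to show that the right-hand side vanishes uniformly in $u,h$ as $R \to +\infty$. Three ingredients would be central: the classical pointwise radial estimate $|u(x)| \leq C_N |x|^{-(N-2)/2}$, available for $u \in D^{1,2}_{\mathrm{rad}}$ with $\|\nabla u\|_2 \leq 1$ and hence for $u \in H^1_{V,\mathrm{r}}$ with $\|u\|=1$; the Sobolev embedding $H^1_V \hookrightarrow D^{1,2} \hookrightarrow L^{2^*}$, giving uniform bounds $\|u\|_{2^*},\|h\|_{2^*} \leq C$; and the tautological weight bounds $\int V u^2 \leq 1$, $\int V h^2 \leq 1$.

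Concretely, I would decompose $\beta_\infty = a + b$ with $b = \min\{\beta_\infty,1/2\}$, $a = \beta_\infty - b$, and $q_2 - 1 = 2a + s + t$ with $s, t \geq 0$. The pointwise bound is applied only to the factor $|u|^t$, extracting $|x|^{-t(N-2)/2}$, and then a five-way H\"older inequality is applied to the remaining factors $(Vu^2)^a, (Vh^2)^b, |u|^s, |h|^{1-2b}, |x|^\gamma$ (with $\gamma := \alpha_\infty - t(N-2)/2$) and H\"older exponents $1/a, 1/b, 2^*/s, 2^*/(1-2b), p_1 \in (1, +\infty]$ whose reciprocals sum to $1$. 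The first four integrals are uniformly bounded by the three ingredients above, so the whole expression reduces to a constant multiple of $\bigl\|\,|x|^\gamma\,\bigr\|_{L^{p_1}(\{|x|>R\})}$, which vanishes as $R \to +\infty$ provided $\gamma p_1 < -N$ (when $p_1 < +\infty$) or $\gamma < 0$ (when $p_1 = +\infty$).

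A direct computation, in which the free parameter $t$ cancels out, reveals that either condition is equivalent to $q_2 > q^*(\alpha_\infty,\beta_\infty)$, exactly matching~(\ref{th2}); nonnegativity of $a, b, s, t$ together with $b \leq 1/2$ forces in turn $q_2 > \max\{1, 2\beta_\infty\}$, again in accord with~(\ref{th2}). The main obstacle I anticipate is the careful handling of degenerate subcases: when $\beta_\infty = 1$ one is forced into $p_1 = +\infty$ with $s = 0$, $a = b = 1/2$, $t = q_2 - 2$, so the $|x|^\gamma$ factor must be pulled out as $R^\gamma$ (legitimate since $\gamma < 0$ under $q_2 > q^*(\alpha_\infty,1)$); and when any of $a, b, s$, or $1-2b$ vanishes, the corresponding H\"older factor simply drops. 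These edge cases can be subsumed by allowing $p_1 \in (1, +\infty]$ throughout the H\"older setup and by choosing $t$ strictly inside its admissible subinterval whenever the constraints so permit.
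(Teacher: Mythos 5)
Your proposal is correct and follows essentially the same route as the paper: there the H\"older/pointwise-estimate machinery is isolated in Lemma \ref{Lem(Omega)} (a case analysis on $\beta$ that corresponds exactly to your choices of $a,b,s,t,p_{1}$, always with $s=0$, so that $t=q_{2}-1-2a$ carries the whole pointwise bound), and Theorem \ref{THM1} is then its application with $\Omega=B_{R}^{c}$, $\nu=\frac{N-2}{2}$, followed by the same sign check on the resulting power of $R$. Your unified five-factor H\"older with the cancelling free parameter $t$, including the degenerate treatment of $\beta_{\infty}=1$ via $p_{1}=+\infty$, is a repackaging of that computation and yields the identical condition $q_{2}>\max\{1,2\beta_{\infty},q^{*}(\alpha_{\infty},\beta_{\infty})\}$.
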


We observe explicitly that for every $\left( \alpha ,\beta \right) \in \mathbb{R%
}\times \left[ 0,1\right] $ one has 
\[
\max \left\{ 1,2\beta ,q^{*}\left( \alpha ,\beta \right) \right\} =\left\{ 
\begin{array}{ll}
q^{*}\left( \alpha ,\beta \right) \quad & \text{if }\alpha \geq \alpha
^{*}\left( \beta \right) \smallskip \\ 
\max \left\{ 1,2\beta \right\} & \text{if }\alpha \leq \alpha ^{*}\left(
\beta \right)
\end{array}
\right. . 
\]

\begin{rem}
\label{RMK: suff12}\quad 

\begin{enumerate}
\item  \label{RMK: suff12-V^0}We mean $V\left( r\right) ^{0}=1$ for every $r$
(even if $V\left( r\right) =0$). In particular, if $V\left( r\right) =0$ for
almost every $r>R_{2}$, then Theorem \ref{THM1} can be applied with $\beta
_{\infty }=0$ and assumption (\ref{esssup all'inf}) means 
\[
\esssup_{r>R_{2}}\frac{K\left( r\right) }{r^{\alpha _{\infty }}}%
<+\infty \quad \text{for some }\alpha _{\infty }\in \mathbb{R}.
\]
Similarly for Theorem \ref{THM0} and assumption (\ref{esssup in 0}), if $%
V\left( r\right) =0$ for almost every $r\in \left( 0,R_{1}\right) $.

\item  \label{RMK: suff12-no hp}The inequality $\max \left\{ 1,2\beta
_{0}\right\} <q^{*}\left( \alpha _{0},\beta _{0}\right) $ is equivalent to $%
\alpha _{0}>\alpha ^{*}\left( \beta _{0}\right) $. Then, in (\ref{th1}),
such inequality is automatically true and does not ask for further
conditions on $\alpha _{0}$ and $\beta _{0}$.

\item  \label{RMK: suff12-Vbdd}The assumptions of Theorems \ref{THM0} and 
\ref{THM1} may hold for different pairs $\left( \alpha _{0},\beta_{0}\right)$,
$\left( \alpha _{\infty },\beta _{\infty }\right) $. In this
case, of course, one chooses them in order to get the ranges for $q_{1},q_{2}
$ as large as possible. For instance, if $V$ is not singular at the origin,
i.e., $V$ is essentially bounded in a neighbourhood of 0, and condition (\ref
{esssup in 0}) holds true for a pair $\left( \alpha _{0},\beta _{0}\right) $%
, then (\ref{esssup in 0}) also holds for all pairs $\left( \alpha
_{0}^{\prime },\beta _{0}^{\prime }\right) $ such that $\alpha _{0}^{\prime
}<\alpha _{0}$ and $\beta _{0}^{\prime }<\beta _{0}$. Therefore, since $\max
\left\{ 1,2\beta \right\} $ is increasing in $\beta $ and $q^{*}\left(
\alpha ,\beta \right) $ is increasing in $\alpha $ and decreasing in $\beta $%
, it is convenient to choose $\beta _{0}=0$ and the best interval where one
can take $q_{1}$ is $1<q_{1}<q^{*}\left( \overline{\alpha },0\right) $ with $%
\overline{\alpha }:=\sup \left\{ \alpha _{0}:\esssup_{r\in \left(
0,R_{1}\right) }\frac{K\left( r\right) }{r^{\alpha _{0}}}<+\infty \right\} $
(we mean $q^{*}\left( +\infty ,0\right) =+\infty $).
\end{enumerate}
\end{rem}

For any $\alpha \in \mathbb{R}$, $\beta \leq 1$ and $\gamma \in \mathbb{R}$,
define 
\begin{equation}
q_{*}\left( \alpha ,\beta ,\gamma \right) :=2\frac{\alpha -\gamma \beta +N}{%
N-\gamma }\quad \text{and}\quad q_{**}\left( \alpha ,\beta ,\gamma \right)
:=2\frac{2\alpha +\left( 1-2\beta \right) \gamma +2\left( N-1\right) }{%
2\left( N-1\right) -\gamma }.  \label{q** :=}
\end{equation}
Of course $q_{*}$ and $q_{**}$ are undefined if $\gamma =N$ and $\gamma
=2\left( N-1\right) $, respectively.

The next Theorems \ref{THM2} and \ref{THM3} improve the results of Theorems 
\ref{THM0} and \ref{THM1} by exploiting further informations on the growth
of $V$ (see Remarks \ref{RMK: Hardy 1}.\ref{RMK: Hardy 1-improve} and \ref
{RMK: Hardy 2}.\ref{RMK: Hardy 2-improve}).

\begin{thm}
\label{THM2}Let $N\geq 3$ and let $V$, $K$ be as in $\left( \mathbf{V}%
\right) $, $\left( \mathbf{K}\right) $ with $V\left( r\right) <+\infty $.
Assume that there exists $R_{2}>0$ such that 
\begin{equation}
\esssup_{r>R_{2}}\frac{K\left( r\right) }{r^{\alpha _{\infty
}}V\left( r\right) ^{\beta _{\infty }}}<+\infty \quad \text{for some }0\leq
\beta _{\infty }\leq 1\text{~and }\alpha _{\infty }\in \mathbb{R}
\label{hp all'inf}
\end{equation}
and 
\begin{equation}
\essinf_{r>R_{2}}r^{\gamma _{\infty }}V\left( r\right) >0\quad 
\text{for some }\gamma _{\infty }\leq 2.  \label{stima all'inf}
\end{equation}
Then $\displaystyle \lim_{R\rightarrow +\infty }\mathcal{R}_{\infty }\left(
q_{2},R\right) =0$ for every $q_{2}\in \mathbb{R}$ such that 
\begin{equation}
q_{2}>\max \left\{ 1,2\beta _{\infty },q_{*},q_{**}\right\} ,  \label{th3}
\end{equation}
where $q_{*}=q_{*}\left( \alpha _{\infty },\beta _{\infty },\gamma _{\infty
}\right) $ and $q_{**}=q_{**}\left( \alpha _{\infty },\beta _{\infty
},\gamma _{\infty }\right) .$
\end{thm}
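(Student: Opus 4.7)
My plan is to control $\mathcal{R}_\infty(q_2,R)$ by bounding $\int_{\mathbb{R}^N\setminus B_R}K(|x|)|u|^{q_2-1}|h|\,dx$ uniformly over unit-norm $u\in H_{V,\mathrm{r}}^1$, $h\in H_V^1$, using two complementary Hölder-type estimates whose validity thresholds coincide with $q_*$ and $q_{**}$. The key new ingredient compared to Theorem~\ref{THM1} is a \emph{weighted} Strauss-type decay estimate exploiting (\ref{stima all'inf}): for $u\in H_{V,\mathrm{r}}^1$ with $\|u\|=1$ and $r>R_2$,
$$|u(r)|\leq C\,r^{-\delta},\qquad \delta:=\frac{2(N-1)-\gamma_\infty}{4},$$
which I would derive from $u(r)^2=-2\int_r^\infty u u'\,ds$ by applying Cauchy--Schwarz to $|u||u'|=(V^{1/2}|u|)\cdot(V^{-1/2}|u'|)$ and converting the factor $V^{-1}$ into a positive power of $r$ via (\ref{stima all'inf}). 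At $\gamma_\infty=2$ this is the classical $r^{-(N-2)/2}$ decay; for $\gamma_\infty<2$ it is strictly faster.

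The first estimate, suited to $q_2\geq 2$, proceeds by three-factor Hölder with exponents $(2,2,\infty)$ applied to the decomposition
$$K|u|^{q-1}|h|=(V^{1/2}|u|)\,(V^{1/2}|h|)\,(KV^{-1}|u|^{q-2}),$$
so that $\int K|u|^{q-1}|h|\,dx\leq\|u\|\|h\|\,\|KV^{-1}|u|^{q-2}\|_{L^\infty(B_R^c)}$. Combining the pointwise bound $KV^{-1}\leq C\,r^{\alpha_\infty+\gamma_\infty(1-\beta_\infty)}$ (obtained from (\ref{hp all'inf})--(\ref{stima all'inf}) using $\beta_\infty\leq 1$) with the weighted Strauss estimate bounds this supremum by $C\,R^{\alpha_\infty+\gamma_\infty(1-\beta_\infty)-(q-2)\delta}$; the exponent is negative precisely when $q>q_{**}$. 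The second estimate, targeting the complementary range where the first is inapplicable (notably $q_2<2$), starts from Cauchy--Schwarz $\int K|u|^{q-1}|h|\,dx\leq\|h\|\,(\int K^2V^{-1}|u|^{2(q-1)}\,dx)^{1/2}$ and controls the remaining integral over $B_R^c$ by interpolating the Sobolev embedding $\|u\|_{L^{2^*}}\leq C\|u\|$ against the weighted Hardy-type bound $\int r^{-\gamma_\infty}u^2\,dx\leq C\|u\|^2$ (an immediate consequence of (\ref{stima all'inf})); after pointwise estimating $K^2/V$ by the appropriate power of $r$, the balance of weights in this interpolation produces the $R$-decay precisely when $q>q_*$.

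The conditions $q_2>1$ and $q_2>2\beta_\infty$ are algebraic: they are exactly what is needed so that the Hölder exponents and the splittings built around $V^{\beta_\infty}|u|^{2\beta_\infty}=(Vu^2)^{\beta_\infty}$ carry non-negative powers of $|u|$. Taking $q_2$ above all four thresholds, one of the two estimates yields $\mathcal{R}_\infty(q_2,R)\to 0$. I expect the principal difficulty to lie in the case $\beta_\infty>1/2$: the bound $K^2/V\leq C\,r^{2\alpha_\infty}V^{2\beta_\infty-1}$ then contains a positive power of $V$ that cannot be absorbed into a pure $r$-power, so one has to rewrite $V^{2\beta_\infty-1}=V\cdot V^{2\beta_\infty-2}$, estimate $V^{2\beta_\infty-2}$ by $r^{\gamma_\infty(2-2\beta_\infty)}$ via (\ref{stima all'inf}), and reallocate the leftover factor of $V$ into an additional weighted-$L^2$ slot, lengthening the Hölder chain. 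Tracking the exponents through all such sub-cases to land on \emph{exactly} $q_*$ and $q_{**}$ is the most delicate bookkeeping, and I expect the detailed computations to be deferred to the Appendix announced in the introduction.
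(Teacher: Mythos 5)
Your proposal is correct and follows essentially the same route as the paper: your weighted Strauss-type decay $|u(x)|\le C\lambda_\infty^{-1/4}\|u\|\,|x|^{-(2(N-1)-\gamma_\infty)/4}$ on $B_{R_2}^c$ is exactly the paper's preliminary lemma (quoted from Su--Wang--Willem), and your two H\"{o}lder decompositions --- the $(2,2,\infty)$ splitting $(V^{1/2}u)(V^{1/2}h)(KV^{-1}|u|^{q-2})$ giving the threshold $q_{**}$, and the Cauchy--Schwarz-in-$h$ estimate giving $q_*$ --- are precisely the $\beta=1$ and $\beta\in(\frac12,1)$ cases of the paper's Lemma \ref{Lem(Omega)}, applied after trading $(\alpha_\infty,\beta_\infty)$ for $(\alpha_\infty+\xi\gamma_\infty,\beta_\infty+\xi)$ by means of (\ref{stima all'inf}). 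The only real difference is organizational: the paper runs a five-case analysis over a single auxiliary parameter $\xi\ge 0$ rather than two complementary estimates, and the reallocation of the leftover factor of $V$ that you correctly identify as the delicate point when $\beta_\infty>\frac12$ is exactly what the choice of $\xi$ accomplishes there.
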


For future convenience, we define three functions $\alpha _{1}:=\alpha
_{1}\left( \beta ,\gamma \right) $, $\alpha _{2}:=\alpha _{2}\left( \beta
\right) $ and $\alpha _{3}:=\alpha _{3}\left( \beta ,\gamma \right) $ by
setting 
\begin{equation}
\alpha _{1}:=-\left( 1-\beta \right) \gamma ,\quad \alpha _{2}:=-\left(
1-\beta \right) N,\quad \alpha _{3}:=-\frac{N+\left( 1-2\beta \right) \gamma 
}{2}.  \label{alpha_i :=}
\end{equation}
Then an explicit description of $\max \left\{ 1,2\beta ,q_{*},q_{**}\right\} 
$ is the following: for every $\left( \alpha ,\beta ,\gamma \right) \in \mathbb{%
R}\times \left( -\infty ,1\right] \times \left( -\infty ,2\right] $ we have 
\begin{equation}
\max \left\{ 1,2\beta ,q_{*},q_{**}\right\} =\left\{ 
\begin{array}{ll}
q_{**}\left( \alpha ,\beta ,\gamma \right) \quad & \text{if }\alpha \geq
\alpha _{1}\smallskip \\ 
q_{*}\left( \alpha ,\beta ,\gamma \right) & \text{if }\max \left\{ \alpha
_{2},\alpha _{3}\right\} \leq \alpha \leq \alpha _{1}\smallskip \\ 
\max \left\{ 1,2\beta \right\} & \text{if }\alpha \leq \max \left\{ \alpha
_{2},\alpha _{3}\right\}
\end{array}
\right. ,  \label{descrizioneThm2}
\end{equation}
where $\max \left\{ \alpha _{2},\alpha _{3}\right\} <\alpha _{1}$ for every $%
\beta <1$ and $\max \left\{ \alpha _{2},\alpha _{3}\right\} =\alpha _{1}=0$
if $\beta =1$.

\begin{rem}
\label{RMK: Hardy 1}\quad 

\begin{enumerate}
\item  \label{RMK: Hardy 1-B<0}The proof of Theorem \ref{THM2} does not
require $\beta _{\infty }\geq 0$, but this condition is not a restriction of
generality in stating the theorem. Indeed, under assumption (\ref{stima
all'inf}), if (\ref{hp all'inf}) holds with $\beta _{\infty }<0$, then it
also holds with $\alpha _{\infty }$ and $\beta _{\infty }$ replaced by $%
\alpha _{\infty }-\beta _{\infty }\gamma _{\infty }$ and $0$ respectively,
and this does not change the thesis (\ref{th3}), because $q_{*}\left( \alpha
_{\infty }-\beta _{\infty }\gamma _{\infty },0,\gamma _{\infty }\right)
=q_{*}\left( \alpha _{\infty },\beta _{\infty },\gamma _{\infty }\right) $
and $q_{**}\left( \alpha _{\infty }-\beta _{\infty }\gamma _{\infty
},0,\gamma _{\infty }\right) =q_{**}\left( \alpha _{\infty },\beta _{\infty
},\gamma _{\infty }\right) $.

\item  \label{RMK: Hardy 1-improve}Denote $q^{*}=q^{*}\left( \alpha _{\infty
},\beta _{\infty }\right) $ for brevity. If $\gamma _{\infty }<2$, then one
has 
\[
\max \left\{ 1,2\beta _{\infty },q^{*}\right\} =\left\{ 
\begin{array}{ll}
\max \left\{ 1,2\beta _{\infty }\right\} =\max \left\{ 1,2\beta _{\infty
},q_{*},q_{**}\right\} \quad \smallskip  & \text{if }\alpha _{\infty }\leq
\alpha ^{*}\left( \beta _{\infty }\right)  \\ 
q^{*}>\max \left\{ 1,2\beta _{\infty },q_{*},q_{**}\right\}  & \text{if }%
\alpha _{\infty }>\alpha ^{*}\left( \beta _{\infty }\right) 
\end{array}
\right. ,
\]
so that, under assumption (\ref{stima all'inf}), Theorem \ref{THM2} improves
Theorem \ref{THM1}. Otherwise, if $\gamma _{\infty }=2$, we have $%
q_{*}=q_{**}=q^{*}$ and Theorems \ref{THM2} and \ref{THM1} give the same
result. This is not surprising, since, by Hardy inequality, the space $%
H_{V}^{1}$ coincides with $D^{1,2}(\mathbb{R}^{N})$ if $V\left( r\right) =r^{-2}
$ and thus, for $\gamma _{\infty }=2$, we cannot expect a better result than
the one of Theorem \ref{THM1}, which covers the case of $V\left( r\right)
\equiv 0$, i.e., of $D^{1,2}(\mathbb{R}^{N})$.

\item  \label{RMK: Hardy 1-best gamma}Description (\ref{descrizioneThm2})
shows that $q_{*}$ and $q_{**}$ are relevant in inequality (\ref{th3}) only
for $\alpha _{\infty }>\alpha _{2}\left( \beta _{\infty }\right) $. In this
case, both $q_{*}$ and $q_{**}$ turn out to be increasing in $\gamma $ and
hence it is convenient to apply Theorem \ref{THM2} with the smallest $\gamma
_{\infty }$ for which (\ref{stima all'inf}) holds. This is consistent with
the fact that, if (\ref{stima all'inf}) holds with $\gamma _{\infty }$, then
it also holds with every $\gamma _{\infty }^{\prime }$ such that $\gamma
_{\infty }\leq \gamma _{\infty }^{\prime }\leq 2$.
\end{enumerate}
\end{rem}

In order to state our last result, we introduce, by the following
definitions, an open region $\mathcal{A}_{\beta ,\gamma }$ of the $\alpha q$%
-plane, depending on $\beta \leq 1$ and $\gamma \geq 2$. Recall the
definitions (\ref{q** :=}) of the functions $q_{*}=q_{*}\left( \alpha ,\beta
,\gamma \right) $ and $q_{**}=q_{**}\left( \alpha ,\beta ,\gamma \right) $.
We set 
\begin{equation}
\begin{array}{ll}
\mathcal{A}_{\beta ,\gamma }:=\left\{ \left( \alpha ,q\right) :\max \left\{
1,2\beta \right\} <q<\min \left\{ q_{*},q_{**}\right\} \right\} \quad
\smallskip & \text{if }2\leq \gamma <N, \\ 
\mathcal{A}_{\beta ,\gamma }:=\left\{ \left( \alpha ,q\right) :\max \left\{
1,2\beta \right\} <q<q_{**},\,\alpha >-\left( 1-\beta \right) N\right\}
\quad \smallskip & \text{if }\gamma =N, \\ 
\mathcal{A}_{\beta ,\gamma }:=\left\{ \left( \alpha ,q\right) :\max \left\{
1,2\beta ,q_{*}\right\} <q<q_{**}\right\} \smallskip & \text{if }N<\gamma
<2N-2, \\ 
\mathcal{A}_{\beta ,\gamma }:=\left\{ \left( \alpha ,q\right) :\max \left\{
1,2\beta ,q_{*}\right\} <q,\,\alpha >-\left( 1-\beta \right) \gamma \right\}
\smallskip & \text{if }\gamma =2N-2, \\ 
\mathcal{A}_{\beta ,\gamma }:=\left\{ \left( \alpha ,q\right) :\max \left\{
1,2\beta ,q_{*},q_{**}\right\} <q\right\} & \text{if }\gamma >2N-2.
\end{array}
\label{A:=}
\end{equation}
For more clarity, $\mathcal{A}_{\beta ,\gamma }$ is sketched in the
following five pictures, according to the five cases above. Recall the
definitions (\ref{alpha_i :=}) of the functions $\alpha _{1}=\alpha
_{1}\left( \beta ,\gamma \right) $, $\alpha _{2}=\alpha _{2}\left( \beta
\right) $ and $\alpha _{3}=\alpha _{3}\left( \beta ,\gamma \right) $.\bigskip

\noindent
\begin{tabular}[t]{l}
\begin{tabular}{l}
\textbf{Fig.1}:\ \ $\mathcal{A}_{\beta ,\gamma }$ for $\beta \leq 1$\\
and $2\leq \gamma <N$.\smallskip\\
%\\ 
$\bullet $ If $\gamma =2$, the two\\
straight lines above are\\
the same.\smallskip\\
%\\ 
$\bullet $ If $\beta <1$ we have\\
$\max \left\{ \alpha _{2},\alpha _{3}\right\} <\alpha _{1}<0;$\\
if $\beta =1$ we have\\
$\max \left\{ \alpha _{2},\alpha _{3}\right\} =\alpha _{1}=0$\\
and $\mathcal{A}_{1,\gamma }$ reduces to the\\
angle $2<q<q_{**}$.
\end{tabular}
\begin{tabular}{l}
\includegraphics[width=3.9in]{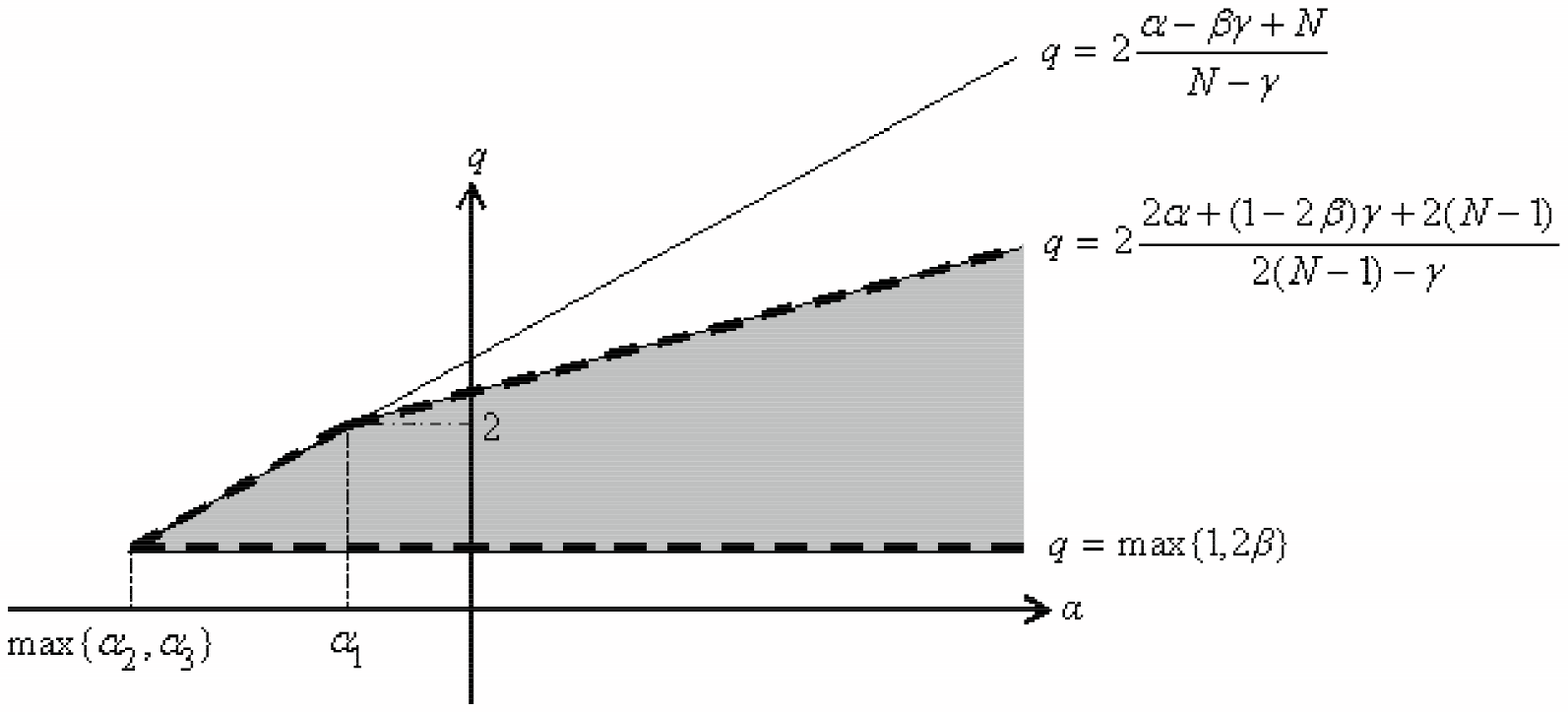}
\end{tabular}
\end{tabular}
\bigskip

\noindent
\begin{tabular}[t]{l}
\begin{tabular}{l}
\textbf{Fig.2}:\ \ $\mathcal{A}_{\beta ,\gamma }$ for $\beta \leq 1$\\
and $\gamma =N$.\smallskip\\
%\\ 
$\bullet $ If $\beta <1$ we have\\
$\alpha _{1}=\alpha _{2}=\alpha _{3}<0;$\\
if $\beta =1$ we have\\
$\alpha _{1}=\alpha _{2}=\alpha _{3}=0$\\
and $\mathcal{A}_{1,\gamma }$ reduces to the\\
angle $2<q<q_{**}$.
\end{tabular}
\begin{tabular}{l}
\includegraphics[width=3.9in]{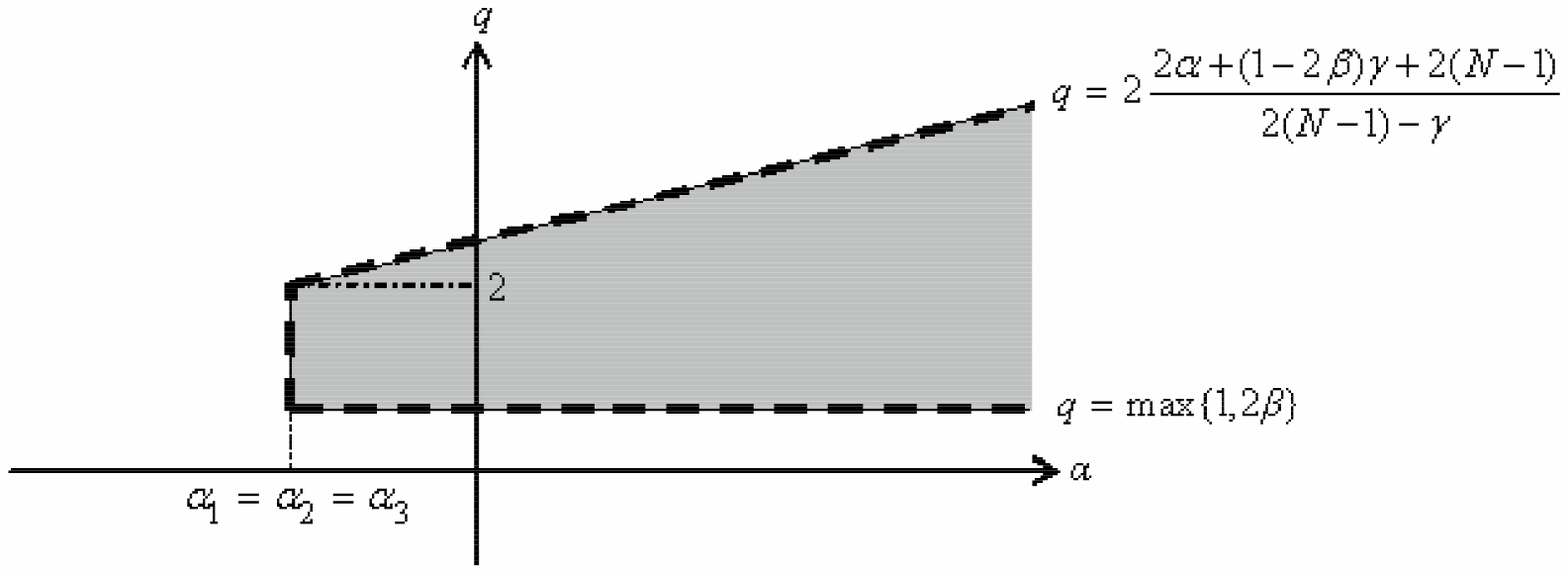}
\end{tabular}
\end{tabular}
\bigskip

\noindent
\begin{tabular}[t]{l}
\begin{tabular}{l}
\textbf{Fig.3}:\ \ $\mathcal{A}_{\beta ,\gamma }$ for $\beta \leq 1$\\
and $N<\gamma <2N-2$.\smallskip\\
%\\ 
$\bullet $ If $\beta <1$ we have\\
$\alpha _{1}<\min \left\{ \alpha _{2},\alpha _{3}\right\} <0;$\\
if $\beta =1$ we have\\
$\alpha _{1}=\min \left\{ \alpha _{2},\alpha _{3}\right\} =0$\\
and $\mathcal{A}_{1,\gamma }$ reduces to the\\
angle $2<q<q_{**}$.
\end{tabular}
\begin{tabular}{l}
\includegraphics[width=3.9in]{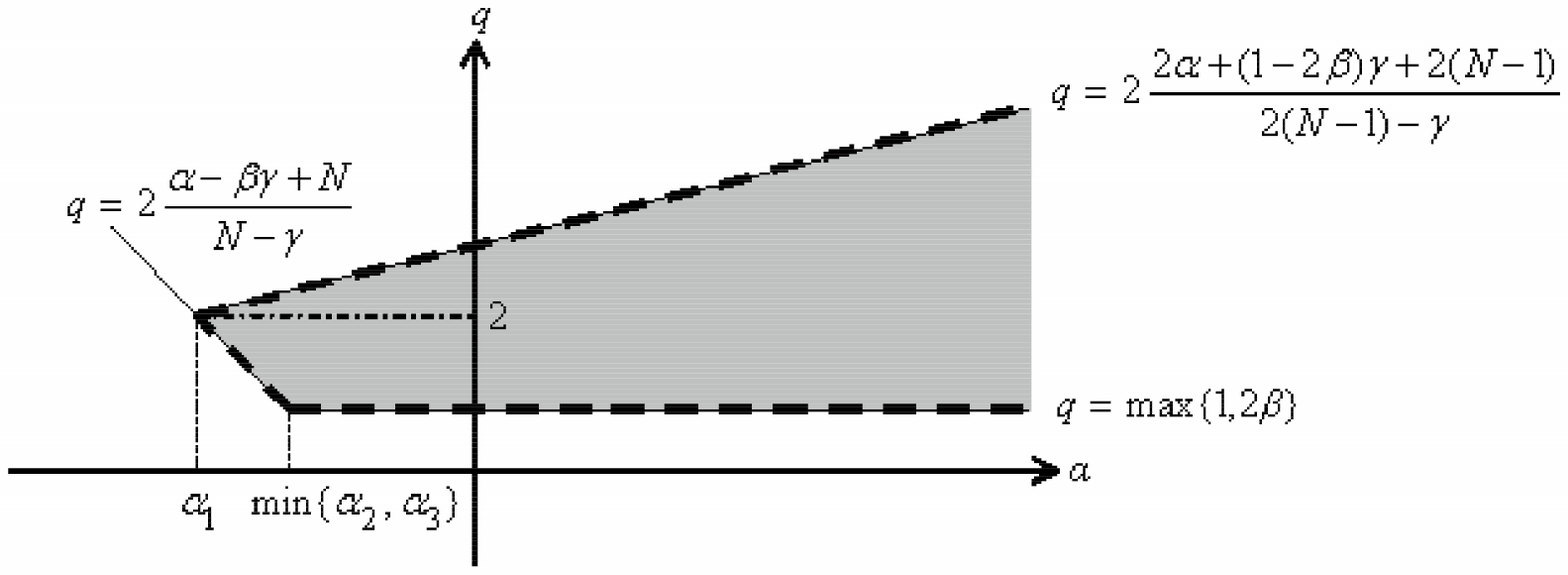}
\end{tabular}
\end{tabular}
\bigskip

\noindent
\begin{tabular}[t]{l}
\begin{tabular}{l}
\textbf{Fig.4}:\ \ $\mathcal{A}_{\beta ,\gamma }$ for $\beta \leq 1$ \\ 
and $\gamma =2N-2$.\smallskip\\ 
%\\ 
$\bullet $ If $\beta <1$ we have \\ 
$\alpha _{1}<\min \left\{ \alpha _{2},\alpha _{3}\right\} <0;$ \\ 
if $\beta =1$ we have \\ 
$\alpha _{1}=\min \left\{ \alpha _{2},\alpha _{3}\right\} =0$ \\ 
and $\mathcal{A}_{1,\gamma }$ reduces to the \\ 
angle $\alpha >0,\,q>2$.
\end{tabular}
%\quad 
\begin{tabular}{l}
\includegraphics[width=3.9in]{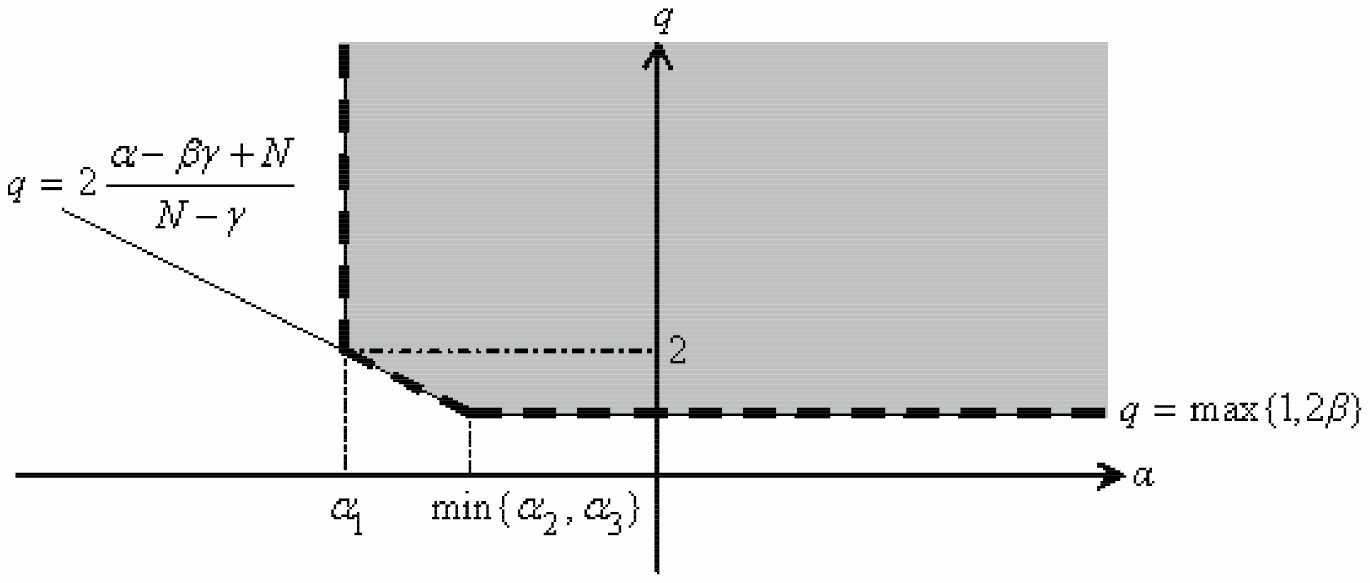}
\end{tabular}
\end{tabular}
\bigskip

\noindent
\begin{tabular}[t]{l}
\begin{tabular}{l}
\textbf{Fig.5}:\ \ $\mathcal{A}_{\beta ,\gamma }$ for $\beta \leq 1$\\
and $\gamma >2N-2$.\smallskip\\
%\\ 
$\bullet $ If $\beta <1$ we have\\
$\alpha _{1}<\min \left\{ \alpha _{2},\alpha _{3}\right\} <0;$\\
if $\beta =1$ we have\\
$\alpha _{1}=\min \left\{ \alpha _{2},\alpha _{3}\right\} =0$\\
and $\mathcal{A}_{1,\gamma }$ reduces to the\\
angle $q>\max \left\{ 2,q_{**}\right\} $.
\end{tabular}
%\quad 
\begin{tabular}{l}
\includegraphics[width=3.9in]{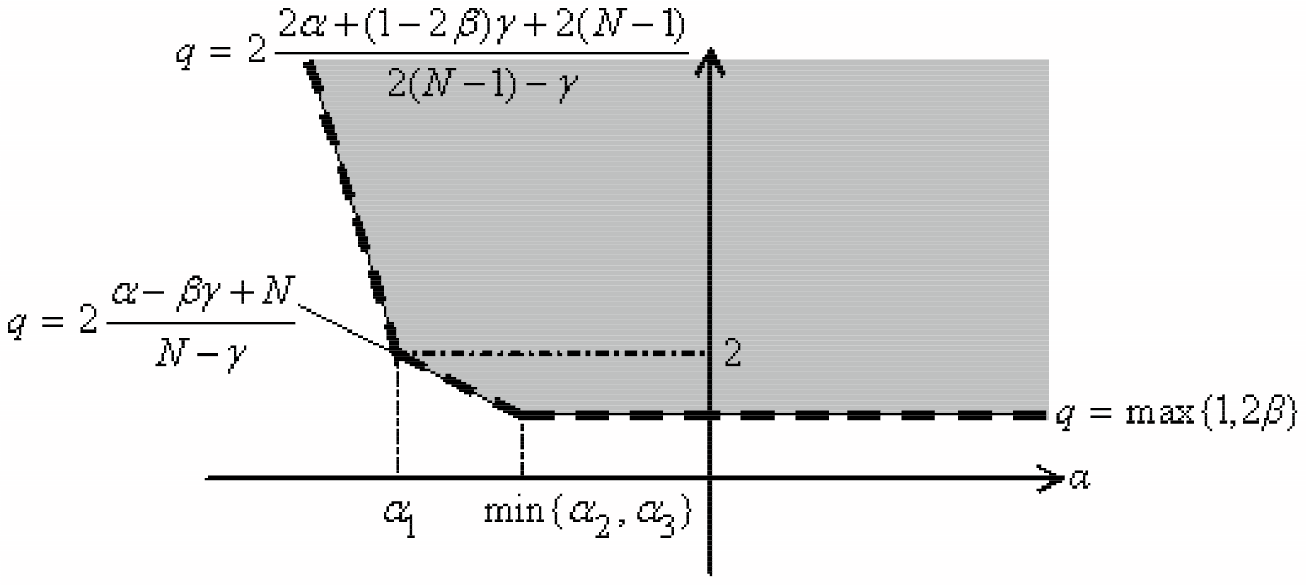}
\end{tabular}
\end{tabular}
\bigskip

\begin{thm}
\label{THM3}Let $N\geq 3$ and let $V$, $K$ be as in $\left( \mathbf{V}%
\right) $, $\left( \mathbf{K}\right) $ with $V\left( r\right) <+\infty $.
Assume that there exists $R_{1}>0$ such that 
\begin{equation}
\esssup_{r\in \left( 0,R_{1}\right) }\frac{K\left( r\right) }{%
r^{\alpha _{0}}V\left( r\right) ^{\beta _{0}}}<+\infty \quad \text{for some }%
0\leq \beta _{0}\leq 1\text{~and }\alpha _{0}\in \mathbb{R}  \label{hp in 0}
\end{equation}
and 
\begin{equation}
\essinf_{r\in \left( 0,R_{1}\right) }r^{\gamma _{0}}V\left(
r\right) >0\quad \text{for some }\gamma _{0}\geq 2.  \label{stima in 0}
\end{equation}
Then $\displaystyle \lim_{R\rightarrow 0^{+}}\mathcal{R}_{0}\left(
q_{1},R\right) =0$ for every $q_{1}\in \mathbb{R}$ such that 
\begin{equation}
\left( \alpha _{0},q_{1}\right) \in \mathcal{A}_{\beta _{0},\gamma _{0}}.
\label{th4}
\end{equation}
\end{thm}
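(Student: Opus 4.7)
The plan is to establish, for every $q_1$ with $(\alpha_0, q_1) \in \mathcal{A}_{\beta_0, \gamma_0}$ and every $R \in (0, R_1)$, a bound of the form
\[
\int_{B_R} K(|x|) |u|^{q_1-1} |h|\,dx \leq C R^\eta \|u\|^{q_1-1} \|h\|, \quad u \in H_{V,\mathrm{r}}^1,\ h \in H_V^1,
\]
with some $\eta = \eta(\alpha_0, \beta_0, \gamma_0, q_1) > 0$; this immediately yields $\mathcal{R}_0(q_1, R) \to 0$ as $R \to 0^+$. We may normalize $\|u\| = \|h\| = 1$ and use the two hypotheses in the form $K(r) \leq C\, r^{\alpha_0} V(r)^{\beta_0}$ and $V(r)^{-s} \leq C\, r^{\gamma_0 s}$ (for every $s \geq 0$) on $(0, R_1)$.

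The a priori controls available for the unit-norm pair $(u,h)$ are: (i) $\int |\nabla u|^2\,dx \leq 1$ and $\int V u^2\,dx \leq 1$, with identical bounds for $h$; (ii) the Sobolev estimate $\|h\|_{L^{2^{*}}} \leq C$; (iii) the Strauss pointwise inequality $|u(x)| \leq C\,|x|^{(2-N)/2}$ valid for radial $u \in D_{\mathrm{rad}}^{1,2}(\mathbb{R}^N)$; (iv) the weighted estimate $\int_{B_{R_1}} |x|^{-\gamma_0} h^2\,dx \leq C$ (and analogue for $u$), which follows from (\ref{stima in 0}) together with (i). I would then decompose the integrand
\[
|x|^{\alpha_0}\, V(|x|)^{\beta_0}\, |u|^{q_1-1}\, |h|
\]
as a product of powers of the quantities $V u^2$, $V h^2$, $|u|$, $|h|$, and $|x|$ (converting any residual negative power of $V$ into a positive power of $|x|$ via $V^{-s} \leq C r^{\gamma_0 s}$), apply (iii) pointwise to any residual pure $|u|$-factor, and use iterated H\"older inequalities to reduce the integral on $B_R$ to a product of bounded factors times $\int_{B_R} |x|^{\mu}\,dx$; for this last integral to be of order $R^{\mu+N}$ with $\mu+N>0$, the exponents chosen in the decomposition must satisfy a set of matching and conjugacy constraints.

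The verification that admissible exponents exist is a linear feasibility problem whose solvability in the $(\alpha_0, q_1)$-plane amounts precisely to $(\alpha_0, q_1) \in \mathcal{A}_{\beta_0, \gamma_0}$. Broadly, the threshold $q_{**}$ emerges from schemes that use the Strauss bound (iii) to absorb part of $|u|^{q_1-1}$, balanced against $V u^2$ and either $V h^2$ or (iv); the threshold $q_{*}$ arises from purely weighted-$L^2$ interpolations relying on (iv), without invoking Strauss. The five subcases in (\ref{A:=}) reflect which constraint is binding as $\gamma_0$ crosses the thresholds $N$ and $2N-2$: at $\gamma_0 = N$ the radial integral $\int_0^R r^{-\gamma_0 + N-1}\,dr$ becomes critical, and at $\gamma_0 = 2N-2$ the Strauss-based bound ceases to constrain $q_1$ from above.

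The main obstacle will be the bookkeeping of these interpolations across the five subcases, each requiring a distinct choice of exponents saturating a different subset of (i)--(iv). I would begin with the benchmark case $\gamma_0 = 2$, where (\ref{stima in 0}) reduces to the classical Hardy inequality already implicit in the $D^{1,2}$-part of $\|u\|$, so that $q_{*} = q_{**} = q^{*}(\alpha_0,\beta_0)$ and $\mathcal{A}_{\beta_0, 2}$ coincides with the range of Theorem \ref{THM0}; then I would progressively activate (iv) for $\gamma_0 > 2$. Structurally, the argument should mirror the proof of Theorem \ref{THM2}, which treats the parallel problem at infinity, with the inequalities reversed where $\gamma_0 \geq 2$ replaces $\gamma_\infty \leq 2$ in the comparison of $r$-powers on the small versus large scale.
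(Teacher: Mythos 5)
There is a genuine gap, and it lies precisely in the pointwise ingredient. You propose to handle the residual powers of $|u|$ with the Strauss/Ni inequality $|u(x)|\leq C\left\| u\right\| \left| x\right|^{-(N-2)/2}$ and you attribute the threshold $q_{**}$ to ``schemes that use the Strauss bound''. But $q_{**}=q_{**}\left( \alpha_{0},\beta_{0},\gamma_{0}\right)$ depends on $\gamma_{0}$, while the Strauss bound does not; no H\"older scheme whose only pointwise input is a $\gamma_{0}$-independent decay rate can produce it. The paper's proof of Theorem \ref{THM3} rests on a different estimate (Lemma \ref{LEM(pointwise0)}, from \cite[Lemma 5]{Su-Wang-Will p}): for $u\in H_{V,\mathrm{r}}^{1}\cap D_{0}^{1,2}\left( B_{R}\right)$ one has $\left| u\left( x\right) \right| \leq c_{0}\left( \cdots \right) \left\| u\right\| \left| x\right| ^{-\left( 2N-2-\gamma _{0}\right) /4}$ on $B_{R}$, a strictly slower blow-up than Strauss as soon as $\gamma_{0}>2$; it is the exponent $\nu =\left( 2N-2-\gamma _{0}\right) /4$, inserted into Lemma \ref{Lem(Omega)}, that generates $q_{**}$. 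A concrete test shows your tools cannot reach the stated range: by (\ref{A:=}), for $\gamma_{0}\geq 2N-2$ the region $\mathcal{A}_{\beta_{0},\gamma_{0}}$ contains arbitrarily large $q_{1}$, and the powers of $V$ hidden in $K\leq \Lambda r^{\alpha_{0}}V^{\beta_{0}}$ can only be matched against the finitely many integrals $\int V u^{2}$, $\int V h^{2}$, so any interpolation must absorb $\left| u\right|^{q_{1}-O(1)}$ pointwise; with Strauss this costs $\left| x\right|^{-(N-2)(q_{1}-O(1))/2}$, a singularity growing without bound in $q_{1}$, whereas the improved estimate (whose exponent is nonnegative when $\gamma_{0}\geq 2N-2$) gives uniform boundedness of $u$ on $B_{R}$. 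Your weighted bound (iv) cannot substitute for this at the pointwise level.

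The second missing step is that the improved decay holds only for functions vanishing on $\partial B_{R}$ (for general radial $u$ a boundary term of order $R^{-(N-2)}$ dominates when $\gamma_{0}>2$). The paper therefore multiplies $u$ by a cutoff $\phi_{R}$ supported in $B_{R}$ and checks $\left\| \phi_{R}u\right\| \leq C\left\| u\right\|$ with $C$ independent of $R$, absorbing the term $R^{-2}\int_{B_{R}}u^{2}dx$ into $\int V u^{2}$ via (\ref{stima in 0}); this reduction (Lemmas \ref{Lem(Omega0)} and \ref{Lem(zero0)} plus the cutoff in the proof proper) appears nowhere in your outline. With only the ingredients you list, the argument would essentially reproduce Theorem \ref{THM0} (the case $\gamma_{0}=2$), not the improvement for $\gamma_{0}>2$. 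Your remaining plan — shifting $\left( \alpha_{0},\beta_{0}\right)$ by $\xi$ using $r^{\gamma_{0}}V\geq \lambda_{0}$, the five-case feasibility check, and the analogy with Theorem \ref{THM2} — does match the paper, but note that Theorem \ref{THM2} likewise uses the improved decay (\ref{PointwiseInfty}) at infinity rather than Strauss, and needs no cutoff there only because the exterior-domain version of the estimate requires no compact support.
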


\begin{rem}
\label{RMK: Hardy 2}\quad 

\begin{enumerate}
\item  Condition (\ref{th4}) also asks for a lower bound on $\alpha _{0}$,
except for the case $\gamma _{0}>2N-2$, as it is clear from Figures 1-5.

\item  The proof of Theorem \ref{THM3} does not require $\beta _{0}\geq 0$,
but this is not a restriction of generality in stating the theorem (cf.
Remark \ref{RMK: Hardy 1}.\ref{RMK: Hardy 1-B<0}). Indeed, under assumption (%
\ref{stima in 0}), if (\ref{hp in 0}) holds with $\beta _{0}<0$, then it
also holds with $\alpha _{0}$ and $\beta _{0}$ replaced by $\alpha
_{0}-\beta _{0}\gamma _{0}$ and $0$ respectively, and one has that $\left(
\alpha _{0},q_{1}\right) \in \mathcal{A}_{\beta _{0},\gamma _{0}}$ if and
only if $\left( \alpha _{0}-\beta _{0}\gamma _{0},q_{1}\right) \in \mathcal{A%
}_{0,\gamma _{0}}$.

\item  \label{RMK: Hardy 2-improve}If (\ref{stima in 0}) holds with $\gamma
_{0}>2$, then Theorem \ref{THM3} improves Theorem \ref{THM0}. Otherwise, if $%
\gamma _{0}=2$, then one has $\max \left\{ \alpha _{2},\alpha _{3}\right\}
=\alpha ^{*}\left( \beta _{0}\right) $ and $\left( \alpha _{0},q_{1}\right)
\in \mathcal{A}_{\beta _{0},\gamma _{0}}$ is equivalent to $\max \left\{
1,2\beta _{0}\right\} <q_{1}<q^{*}\left( \alpha _{0},\beta _{0}\right) $,
i.e., Theorems \ref{THM3} and \ref{THM0} give the same result, which is
consistent with Hardy inequality (cf. Remark \ref{RMK: Hardy 1}.\ref{RMK:
Hardy 1-improve}).

\item  \label{RMK: Hardy 2-best gamma}Given $\beta \leq 1$, one can check
that $\mathcal{A}_{\beta ,\gamma _{1}}\subseteq \mathcal{A}_{\beta ,\gamma
_{2}}$ for every $2\leq \gamma _{1}<\gamma _{2}$, so that, in applying
Theorem \ref{THM3}, it is convenient to choose the largest $\gamma _{0}$ for
which (\ref{stima in 0}) holds. This is consistent with the fact that, if (%
\ref{stima in 0}) holds with $\gamma _{0}$, then it also holds with every $%
\gamma _{0}^{\prime }$ such that $2\leq \gamma _{0}^{\prime }\leq \gamma _{0}
$.
\end{enumerate}
\end{rem}

\section{Examples \label{SUB: es}}

In this section we give some examples which might clarify how to use our
results and compare them with the ones of \cite{Su-Wang-Will
p,SuTian12,BonMerc11,BPR}, cited in the Introduction. It will be always
understood that $N\geq 3$.

\begin{exa}
\label{EX(SWW)}Consider the potentials 
\[
V\left( r\right) =\frac{1}{r^{a}},\quad K\left( r\right) =\frac{1}{r^{a-1}}%
,\quad a\leq 2.
\]
Since $V$ satisfies (\ref{stima all'inf}) with $\gamma _{\infty }=a$ (cf.
Remark \ref{RMK: Hardy 1}.\ref{RMK: Hardy 1-best gamma} for the best choice
of $\gamma _{\infty }$), we apply Theorem \ref{THM(cpt)} together with
Theorems \ref{THM0} and \ref{THM2}. Assumptions (\ref{esssup in 0}) and (\ref
{hp all'inf}) hold if and only if $\alpha _{0}\leq a\beta _{0}-a+1$ and $%
\alpha _{\infty }\geq a\beta _{\infty }-a+1$. According to (\ref{th1}) and (%
\ref{th3}), it is convenient to choose $\alpha _{0}$ as large as possible
and $\alpha _{\infty }$ as small as possible, so we take 
\[
\alpha _{0}=a\beta _{0}-a+1,\quad \alpha _{\infty }=a\beta _{\infty }-a+1.
\]
Then $q^{*}=q^{*}\left( \alpha _{0},\beta _{0}\right) $, $q_{*}=q_{*}\left(
\alpha _{\infty },\beta _{\infty },a\right) $ and $q_{**}=q_{**}\left(
\alpha _{\infty },\beta _{\infty },a\right) $ are given by 
\begin{equation}
q^{*}=2\frac{N-a+1-\left( 2-a\right) \beta _{0}}{N-2},\quad q_{*}=2\frac{%
N-a+1}{N-a}\quad \text{and}\quad q_{**}=2\frac{2N-a}{2N-a-2},  \label{qqq}
\end{equation}
where $a\leq 2$ implies $q_{*}\leq q_{**}$. Note that $\alpha _{0}>\alpha
^{*}\left( \beta _{0}\right) $ for every $\beta _{0}$. Since $q^{*}$ is
decreasing in $\beta _{0}$ and $q_{**}$ is independent of $\beta _{\infty }$%
, it is convient to choose $\beta _{0}=\beta _{\infty }=0$, so that Theorems 
\ref{THM0} and \ref{THM2} yield to exponents $q_{1},q_{2}$ such that 
\[
1<q_{1}<q^{*}=2\frac{N-a+1}{N-2},\quad q_{2}>q_{**}=2\frac{2N-a}{2N-a-2}.
\]
If $a<2$, then one has $q_{**}<q^{*}$ and therefore we get the compact
embedding 
\begin{equation}
H_{V,\mathrm{r}}^{1}\hookrightarrow L_{K}^{q}\qquad \text{for}\quad 2\frac{%
2N-a}{2N-a-2}<q<2\frac{N-a+1}{N-2}.  \label{ES(sww): p}
\end{equation}
If $a=2$, then $q_{**}=q^{*}=2\left( N-1\right) /\left( N-2\right) $ and we
have the compact embedding 
\[
H_{V,\mathrm{r}}^{1}\hookrightarrow L_{K}^{q_{1}}+L_{K}^{q_{2}}\qquad \text{%
for}\quad 1<q_{1}<2\frac{N-1}{N-2}<q_{2}.
\]
Since $V$ and $K$ are power potentials, one can also apply the results of 
\cite{Su-Wang-Will p} (or equivalently of \cite{BonMerc11}, which are the
same for power potentials), finding two limit exponents $\underline{q}$ and $%
\overline{q}$ such that the embedding $H_{V,\mathrm{r}}^{1}\hookrightarrow
L_{K}^{q}$ is compact if $\underline{q}<q<\overline{q}$. These exponents $%
\underline{q}$ and $\overline{q}$ are exactly exponents $q_{**}$ and $q^{*}$
of (\ref{qqq}) respectively, so that one obtains (\ref{ES(sww): p}) again,
provided that $a<2$ (which implies $\underline{q}<\overline{q}$). If $a=2$,
instead, one gets $\underline{q}=\overline{q}$ and no result is avaliable in 
\cite{Su-Wang-Will p} (nor in \cite{BonMerc11}). The results of \cite
{SuTian12} do not apply to $V$ and $K$, since the top and bottom exponents
of \cite{SuTian12} turn out to be equal to one another for every $a\leq 2$.
\end{exa}

\begin{exa}
\label{EX: BPR}Taking $V\left( r\right) \equiv 0$, $K$ as in $\left( \mathbf{%
K}\right) $ and $\beta _{0}=\beta _{\infty }=0$, from Theorems \ref{THM0}, 
\ref{THM1} and \ref{THM(cpt)} (see also Remark \ref{RMK: suff12}.\ref{RMK:
suff12-V^0}) we get the compact embedding $D_{\mathrm{rad}}^{1,2}(\mathbb{R}%
^{N})\hookrightarrow L_{K}^{q_{1}}+L_{K}^{q_{2}}$ for 
\begin{equation}
1<q_{1}<2\frac{\alpha _{0}+N}{N-2}\quad \text{and}\quad q_{2}>\max \left\{
1,2\frac{\alpha _{\infty }+N}{N-2}\right\} ,  \label{EX(BPR): th}
\end{equation}
provided that $\exists R_{1},R_{2}>0$ such that 
\begin{equation}
\esssup_{r>R_{2}}\frac{K\left( r\right) }{r^{\alpha _{\infty }}}%
<+\infty \quad \text{and}\quad \esssup_{r\in \left( 0,R_{1}\right) }%
\frac{K\left( r\right) }{r^{\alpha _{0}}}<+\infty \quad \text{with }\alpha
_{0}>-\frac{N+2}{2}.  \label{EX(BPR): hp}
\end{equation}
This result has already been proved in \cite[Theorem 4.1]{BPR}, assuming $%
K\in L_{\mathrm{loc}}^{\infty }(\left( 0,+\infty \right) )$. Of course,
according to (\ref{EX(BPR): th}), in (\ref{EX(BPR): hp}) it is convenient to
choose $\alpha _{0}$ as large as possible and $\alpha _{\infty }$ as small
as possible. For instance, if $K\left( r\right) =r^{d}$ with $d>-\left(
N+2\right) /2$, we choose $\alpha _{0}=\alpha _{\infty }=d$ and obtain the
compact embedding 
\[
D_{\mathrm{rad}}^{1,2}(\mathbb{R}^{N})\hookrightarrow
L_{K}^{q_{1}}+L_{K}^{q_{2}}\qquad \text{for}\quad 1<q_{1}<2\frac{d+N}{N-2}%
<q_{2}.
\]
Observe that $2(d+N)/(N-2)$ is the ``critical exponent'' found by Ni \cite
{Ni} for the Henon equation. We also observe that, if (\ref{EX(BPR): hp})
holds for some $\alpha _{0}>\alpha _{\infty }$, then we can take $q_{1}=q_{2}
$ in (\ref{EX(BPR): th}) and we get the compact embedding 
\[
D_{\mathrm{rad}}^{1,2}(\mathbb{R}^{N})\hookrightarrow L_{K}^{q}\qquad \text{for}%
\quad \max \left\{ 1,2\frac{\alpha _{\infty }+N}{N-2}\right\} <q<2\frac{%
\alpha _{0}+N}{N-2}
\]
(cf. \cite[Corollary 4.6]{BPR}).
\end{exa}

\begin{exa}
\label{EX(nnP1)}Essentially the same result of Example \ref{EX: BPR} holds
if $V$ is not singular at the origin and, roughly speaking, decays at
infinity much faster than $K$ (or is compactly supported). The result
becomes different (and better) if $K$ decays at infinity similarly to $V$,
or much faster. For example, consider the potentials 
\[
V\left( r\right) =e^{-ar},\quad K_{1}\left( r\right) =r^{d},\quad
K_{2}\left( r\right) =r^{d}e^{-br},\quad a,b>0,\quad d>-\frac{N+2}{2}.
\]
Since $V$ does not satisfy (\ref{stima in 0}) or (\ref{stima all'inf}), we
study the embedding properties of $H_{V,\mathrm{r}}^{1}$ by Theorems \ref
{THM0}, \ref{THM1} and \ref{THM(cpt)}. According to Remark \ref{RMK: suff12}.%
\ref{RMK: suff12-Vbdd}, both for $K=K_{1}$ and $K=K_{2}$, Theorem \ref{THM0}
leads to take 
\[
1<q_{1}<2\frac{d+N}{N-2}.
\]
If $K=K_{1}$, the ratio in (\ref{esssup all'inf}) is bounded only if $\beta
_{\infty }=0$ and the best $\alpha _{\infty }$ we can take is $\alpha
_{\infty }=d$, which yields $q_{2}>2\frac{d+N}{N-2}$. Then we get the
compact embedding 
\[
H_{V,\mathrm{r}}^{1}\hookrightarrow
L_{K_{1}}^{q_{1}}+L_{K_{1}}^{q_{2}}\qquad \text{for}\quad 1<q_{1}<2\frac{d+N%
}{N-2}<q_{2}.
\]
If $K=K_{2}$, instead, assumption (\ref{esssup all'inf}) holds with $\beta
_{\infty }=0$ and $\alpha _{\infty }\in \mathbb{R}$ arbitrary, so that we can
take $q_{2}>1$ arbitrary. Picking $q_{2}=q_{1}$, we get the compact
embedding 
\[
H_{V,\mathrm{r}}^{1}\hookrightarrow L_{K_{2}}^{q}\qquad \text{for}\quad 1<q<2%
\frac{d+N}{N-2}.
\]
A similar but weaker result follows from the theorems of \cite{BonMerc11},
which yield the compact embedding 
\[
H_{V,\mathrm{r}}^{1}\hookrightarrow L_{K_{2}}^{q}\qquad \text{for}\quad 2<q<2%
\frac{d+N}{N-2}.
\]
Note that this requires the restriction $d>-2$. Even though they belong to
the \textit{Hardy-Dieudonn\'{e} comparison class}, no result is avaliable in 
\cite{BonMerc11} for the potentials $V$ and $K_{1}$, due to a lack of
compatibility between their behaviours at zero and at infinity.
\end{exa}

\begin{exa}
\label{EX(nnP2)}Consider the potentials 
\[
V\left( r\right) =e^{\frac{1}{r}},\quad K\left( r\right) =e^{\frac{b}{r}%
},\quad 0<b\leq 1.
\]
Since $V$ satisfies (\ref{stima in 0}), we apply Theorem \ref{THM(cpt)}
together with Theorems \ref{THM1} and \ref{THM3}. Assumption (\ref{esssup
all'inf}) holds for $\alpha _{\infty }\geq 0$ and $0\leq \beta _{\infty
}\leq 1$, so that the best choice for $\alpha _{\infty }$, which is $\alpha
_{\infty }=0$, gives 
\[
\max \left\{ 1,2\beta _{\infty },q^{*}\left( 0,\beta _{\infty }\right)
\right\} =2\frac{N-2\beta _{\infty }}{N-2}.
\]
Now we then take $\beta _{\infty }=1$, so that Theorem \ref{THM1} gives $%
q_{2}>2$. Observe that the same result ensues by applying Theorem \ref{THM2}
with $\gamma _{\infty }=0$ in assumption (\ref{stima all'inf}). As to
Theorem \ref{THM3}, hypothesis (\ref{stima in 0}) holds with $\gamma
_{0}\geq 2$ arbitrary and therefore the most convenient choice is to assume $%
\gamma _{0}>2N-2$ (see Remark \ref{RMK: Hardy 2}.\ref{RMK: Hardy 2-best
gamma}). On the other hand, we have 
\[
\frac{K\left( r\right) }{r^{\alpha _{0}}V\left( r\right) ^{\beta _{0}}}=%
\frac{e^{\frac{b-\beta _{0}}{r}}}{r^{\alpha _{0}}}
\]
and thus hypothesis (\ref{hp in 0}) holds for some $\alpha _{0}\in \mathbb{R}$
if and only if $b\leq \beta _{0}\leq 1$. We now distinguish two cases. If $%
b<1$, we can take $\beta _{0}>b$ and thus (\ref{hp in 0}) holds for every $%
\alpha _{0}\in \mathbb{R}$, so that Theorem \ref{THM3} gives $q_{1}>\max
\left\{ 1,2\beta _{0}\right\} $ (see Fig.5), i.e., $q_{1}>\max \left\{
1,2b\right\} $. If $b=1$, then we need to take $\beta _{0}=1$ and thus (\ref
{hp in 0}) holds for $\alpha _{0}\leq 0$. Since $\gamma _{0}>2N-2$ implies 
\begin{eqnarray*}
\mathcal{A}_{1,\gamma _{0}}&=&
\left\{ \left( \alpha ,q\right) \in \mathbb{R}^{2}:q>\max \left\{ 2,q_{**}\left( \alpha ,1,\gamma _{0}\right) \right\}
\right\}\\
&=&\left\{ \left( \alpha ,q\right) \in \mathbb{R}^{2}:q>\max \left\{ 2,2-\frac{4\alpha }{\gamma _{0}-2\left( N-1\right) }\right\} \right\} ,
\end{eqnarray*}
the best choice for $\alpha _{0}\leq 0$ is $\alpha _{0}=0$ and we get that $%
\left( 0,q_{1}\right) \in \mathcal{A}_{1,\gamma _{0}}$ if and only if $%
q_{1}>2$. Hence Theorem \ref{THM3} gives $q_{1}>\max \left\{ 1,2b\right\} $
again. As a conclusion, recalling the condition $q_{2}>2$ and observing that 
$0<b\leq 1$ implies $\max \left\{ 1,2b\right\} \leq 2$, we obtain the
compact embedding 
\[
H_{V,\mathrm{r}}^{1}\hookrightarrow L_{K}^{q}\qquad \text{for}\quad q>2.
\]
If we now modify $V$ by taking a compactly supported potential $V_{1}$ such
that $V_{1}\left( r\right) \sim V\left( r\right) $ as $r\rightarrow 0^{+}$,
everything works as above in applying Theorem \ref{THM3}, but we now need to
take $\beta _{\infty }=0$ and $\alpha _{\infty }\geq 0$ in Theorem \ref{THM1}%
. This gives 
\[
\max \left\{ 1,2\beta _{\infty },q^{*}\left( \alpha _{\infty },\beta
_{\infty }\right) \right\} =2\frac{\alpha _{\infty }+N}{N-2}
\]
and thus, choosing $\alpha _{\infty }=0$, we get the compact embedding 
\[
H_{V_{1},\mathrm{r}}^{1}\hookrightarrow L_{K}^{q}\qquad \text{for}\quad
q>2^{*}.
\]
Similarly, if we modify $V$ by taking a potential $V_{2}$ such that $%
V_{2}\left( r\right) \sim V\left( r\right) $ as $r\rightarrow 0^{+}$ and $%
V_{2}\left( r\right) \sim r^{N}$ as $r\rightarrow +\infty $, Theorem \ref
{THM3} yields $q_{1}>\max \left\{ 1,2b\right\} $ as above and Theorem \ref
{THM1} gives $q_{2}>1$ (apply it for instance with $\alpha _{\infty }=-N/2$
and $\beta _{\infty }=1/2$), so that we get the compact embedding 
\[
H_{V_{2},\mathrm{r}}^{1}\hookrightarrow L_{K}^{q}\qquad \text{for}\quad
q>\max \left\{ 1,2b\right\} .
\]
The compact embeddings $H_{V,\mathrm{r}}^{1}\hookrightarrow L_{K}^{q}$ and $%
H_{V_{2},\mathrm{r}}^{1}\hookrightarrow L_{K}^{q}$ also follow from the
results of \cite{BonMerc11}, but for $q>2$ in both cases. No result is
avaliable in \cite{BonMerc11} for the embedding $H_{V_{1},\mathrm{r}%
}^{1}\hookrightarrow L_{K}^{q}$, since $V_{1}$ has compact support.
\end{exa}

\begin{exa}
\label{EX(ST)}Consider the potential 
\begin{equation}
V\left( r\right) =r^{a},\quad -2\left( N-1\right) <a<-N,  \label{EX1: V}
\end{equation}
and let $K$ be as in $\left( \mathbf{K}\right) $ and such that 
\begin{equation}
K\left( r\right) =O\left( r^{b_{0}}\right) _{r\rightarrow 0^{+}},\quad
K\left( r\right) =O\left( r^{b}\right) _{r\rightarrow +\infty },\quad
b_{0}>a,\quad b\in \mathbb{R}.  \label{EX1: K}
\end{equation}
Since $V$ satisfies (\ref{stima in 0}) with $\gamma _{0}=-a$ (cf. Remark \ref
{RMK: Hardy 2}.\ref{RMK: Hardy 2-best gamma} for the best choice of $\gamma
_{0}$), we apply Theorem \ref{THM(cpt)} together with Theorems \ref{THM1}
and \ref{THM3}, where assumptions (\ref{esssup all'inf}) and (\ref{hp in 0})
hold for $\alpha _{\infty }\geq b-a\beta _{\infty }$ and $\alpha _{0}\leq
b_{0}-a\beta _{0}$ with $0\leq \beta _{\infty }\leq 1$ and $\beta _{0}\leq 1$
arbitrary. Note that $N<\gamma _{0}<2N-2$. According to (\ref{th2}) and (\ref
{th4}) (see in particular Fig.3), it is convenient to choose $\alpha
_{\infty }$ as small as possible and $\alpha _{0}$ as large as possible, so
we take 
\begin{equation}
\alpha _{\infty }=b-a\beta _{\infty },\quad \alpha _{0}=b_{0}-a\beta _{0}.
\label{EX(SWW2):alpha}
\end{equation}
Then $q^{*}=q^{*}\left( \alpha _{\infty },\beta _{\infty }\right) $, $%
q_{*}=q_{*}\left( \alpha _{0},\beta _{0},-a\right) $ and $%
q_{**}=q_{**}\left( \alpha _{0},\beta _{0},-a\right) $ are given by 
\[
q^{*}=2\frac{N+b-\left( a+2\right) \beta _{\infty }}{N-2},\quad q_{*}=2\frac{%
N+b_{0}}{N+a}\quad \text{and}\quad q_{**}=2\frac{2N-2+2b_{0}-a}{2N-2+a}.
\]
Since $a+2<2-N<0$, the exponent $q^{*}$ is increasing in $\beta _{\infty }$
and thus, according to (\ref{th2}) again, the best choice for $\beta
_{\infty }$ is $\beta _{\infty }=0$. This yields 
\begin{equation}
q_{2}>\max \left\{ 1,2\frac{N+b}{N-2}\right\} .  \label{EX(SWW2):q1}
\end{equation}
As to Theorem \ref{THM3}, we observe that, thanks to the choice of $\alpha
_{0}$, the exponents $q_{*}$ and $q_{**}$ are independent of $\beta _{0}$,
so that we can choose $\beta _{0}=0$ in order to get the region $\mathcal{A}%
_{\beta _{0},-a}$ as large as possible (cf. Fig.3 or the third definition in
(\ref{A:=})). Then we get $\alpha _{0}=b_{0}>a=\alpha _{1}$ (recall (\ref
{EX(SWW2):alpha}) and the definition (\ref{alpha_i :=}) of $\alpha _{1}$),
so that $\left( \alpha _{0},q_{1}\right) \in \mathcal{A}_{0,-a}$ if and only
if 
\begin{equation}
\max \left\{ 1,2\frac{N+b_{0}}{N+a}\right\} <q_{1}<2\frac{2N-2+2b_{0}-a}{%
2N-2+a}.  \label{EX(SWW2):q2}
\end{equation}
As a conclusion, we obtain the compact embedding 
\[
H_{V,\mathrm{r}}^{1}\hookrightarrow L_{K}^{q_{1}}+L_{K}^{q_{2}}\qquad \text{%
for every }q_{1},q_{2}\text{\ satisfying (\ref{EX(SWW2):q1}) and (\ref
{EX(SWW2):q2}).}
\]
If furthermore $a,b,b_{0}$ are such that 
\begin{equation}
\frac{N+b}{N-2}<\frac{2N-2+2b_{0}-a}{2N-2+a},  \label{EX1: further}
\end{equation}
then we can take $q_{1}=q_{2}$ and we get the compact embedding 
\begin{equation}
H_{V,\mathrm{r}}^{1}\hookrightarrow L_{K}^{q}\qquad \text{for}\quad \max
\left\{ 1,2\frac{N+b_{0}}{N+a},2\frac{N+b}{N-2}\right\} <q<2\frac{%
2N-2+2b_{0}-a}{2N-2+a}.  \label{EX1: Lq}
\end{equation}
Observe that the potentials $V$ and $K$ behave as a power and thus they fall
into the classes considered in \cite{Su-Wang-Will p,SuTian12,BonMerc11}. In
particular, the results of \cite{Su-Wang-Will p} and \cite{BonMerc11} (which
are the same for $V,K$ as in (\ref{EX1: V}), (\ref{EX1: K})) provide the
compact embedding 
\begin{equation}
H_{V,\mathrm{r}}^{1}\hookrightarrow L_{K}^{q}\qquad \text{for}\quad \max
\left\{ 2,2\frac{N+b}{N-2}\right\} =:\underline{q}<q<\overline{q}:=2\frac{%
2N-2+2b_{0}-a}{2N-2+a}.  \label{EX1: ssw}
\end{equation}
This requires condition (\ref{EX1: further}), which amounts to $\underline{q}%
<\overline{q}$, and no compact embedding is found in \cite{Su-Wang-Will
p,BonMerc11} if (\ref{EX1: further}) fails. Moreover, our result improves (%
\ref{EX1: ssw}) even if (\ref{EX1: further}) holds. Indeed, $b_{0}>a$ and $%
N+a<0$ imply $\frac{N+b_{0}}{N+a}<1$ and thus one has 
\[
\underline{q}=\left\{ 
\begin{array}{ll}
2\frac{N+b}{N-2}=\max \left\{ 1,2\frac{N+b}{N-2},2\frac{N+b_{0}}{N+a}%
\right\} \quad \smallskip  & \text{if }b\geq -2 \\ 
2>\max \left\{ 1,2\frac{N+b}{N-2},2\frac{N+b_{0}}{N+a}\right\}  & \text{if }%
b<-2
\end{array}
\right. ,
\]
so that (\ref{EX1: Lq}) is exactly (\ref{EX1: ssw}) if $b\geq -2$ and it is
better if $b<-2$. This last case actually concerns sub-quadratic exponents,
so it should be also compared with the results of \cite{SuTian12}, where,
setting 
\[
b_{1}:=\frac{a-2-2N}{4},\qquad b_{2}:=\frac{a-2}{2},\qquad b_{3}:=-\frac{N+2%
}{2}
\]
(notice that $-N<b_{1}<b_{2}<b_{3}<-2$ for $a$ as in (\ref{EX1: V})) and 
\begin{equation}
\underline{q}^{\prime }:=\left\{ 
\begin{array}{ll}
2\frac{N+b}{N-2}\quad \quad \smallskip  & \text{if }b\in \left[
b_{3},-2\right)  \\ 
4\frac{N+b}{2N-2+a} & \text{if }b\in \left[ b_{1},b_{2}\right) 
\end{array}
\right. ,\qquad \overline{q}^{\prime }:=\left\{ 
\begin{array}{ll}
2\frac{N+b_{0}}{N-2}\quad \quad \smallskip  & \text{if }b_{0}\in \left(
b_{3},-2\right]  \\ 
4\frac{N+b_{0}}{2N-2+a} & \text{if }b_{0}\in \left( b_{1},b_{2}\right] 
\end{array}
\right. ,  \label{EX1: st-qs}
\end{equation}
the authors find the compact embedding 
\begin{equation}
H_{V,\mathrm{r}}^{1}\hookrightarrow L_{K}^{q}\qquad \text{for}\quad 
\underline{q}^{\prime }<q<\overline{q}^{\prime }.  \label{EX1: st}
\end{equation}
Our result (\ref{EX1: Lq})-(\ref{EX1: further}) extends (\ref{EX1: st}) in
three directions. First, (\ref{EX1: st}) requires that $\underline{q}%
^{\prime }$ and $\overline{q}^{\prime }$ are defined, i.e., $b$ and $b_{0}$
lie in the intervals considered in (\ref{EX1: st-qs}), while (\ref{EX1: Lq})
and (\ref{EX1: further}) do not need such a restriction, also covering cases
of $b\in \left( -\infty ,b_{1}\right) \cup \left[ b_{2},b_{3}\right) $ or $%
b_{0}\in \left( a,b_{1}\right] \cup \left( b_{2},b_{3}\right] $ (take for
instance $b_{0}>a$ arbitrary and $b$ small enough to satisfy (\ref{EX1:
further})). Moreover, (\ref{EX1: st}) asks for the further condition $%
\underline{q}^{\prime }<\overline{q}^{\prime }$, which can be false even if $%
\underline{q}^{\prime }$ and $\overline{q}^{\prime }$ are defined (take for
instance $b_{3}<b=b_{0}<-2$ or $b_{1}<b=b_{0}<b_{2}$, which give $\underline{%
q}^{\prime }=\overline{q}^{\prime }$), while condition (\ref{EX1: further})
does not. Indeed, as soon as $\underline{q}^{\prime }$ and $\overline{q}%
^{\prime }$ are defined, one has $b<-2$ and $b_{0}>-N$, which imply 
\[
\frac{N+b}{N-2}-\frac{2N-2+2b_{0}-a}{2N-2+a}<1+\frac{2+a}{2N-2+a}=2\frac{N+a%
}{2N-2+a}<0.
\]
Finally, setting 
\[
\underline{q}^{\prime \prime }:=\max \left\{ 1,2\frac{N+b_{0}}{N+a},2\frac{%
N+b}{N-2}\right\} 
\]
for brevity, some computations (which we leave to the reader) show that,
whenever $\underline{q}^{\prime }$ and $\overline{q}^{\prime }$ are defined,
one has 
\[
\underline{q}^{\prime }=\left\{ 
\begin{array}{ll}
2\frac{N+b}{N-2}=\underline{q}^{\prime \prime }\medskip  & \text{if }b\in
\left[ b_{3},-2\right)  \\ 
4\frac{N+b}{2N-2+a}=1=\underline{q}^{\prime \prime }~~\medskip  & \text{if }%
b=b_{1} \\ 
4\frac{N+b}{2N-2+a}>1=\underline{q}^{\prime \prime } & \text{if }b\in \left(
b_{1},b_{2}\right) 
\end{array}
\right. \quad \text{and}\quad \overline{q}^{\prime }<2\frac{2N-2+2b_{0}-a}{%
2N-2+a}.
\]
This shows that (\ref{EX1: Lq}) always gives a wider range of exponents $q$
than (\ref{EX1: st}).
\end{exa}

\section{Proof of Theorem \ref{THM(cpt)} \label{SEC:1}}

Assume $N\geq 3$ and let $V$ and $K$ be as in $\left( \mathbf{V}\right) $
and $\left( \mathbf{K}\right) $.

Recall the definitions (\ref{H:=}) and (\ref{Hr:=}) of the Hilbert spaces $%
H_{V}^{1}$ and $H_{V,\mathrm{r}}^{1}$ and fix two constants $S_{N}>0$ and $%
C_{N}>0$, only dependent on $N$, such that 
\begin{equation}
\forall u\in H_{V}^{1}\left( \mathbb{R}^{N}\right) ,\quad \left\| u\right\|
_{L^{2^{*}}(\mathbb{R}^{N})}\leq S_{N}\left\| u\right\|  \label{Sobolev}
\end{equation}
and 
\begin{equation}
\forall u\in H_{V,\mathrm{r}}^{1}\left( \mathbb{R}^{N}\right) ,\quad \left|
u\left( x\right) \right| \leq C_{N}\left\| u\right\| \frac{1}{\left|
x\right| ^{\frac{N-2}{2}}}\quad \text{almost everywhere on }\mathbb{R}^{N}.
\label{PointwiseEstimate}
\end{equation}
The first constant $S_{N}$ exists by Sobolev inequality and the continuous
embedding $H_{V}^{1}\hookrightarrow D^{1,2}(\mathbb{R}^{N})$. The second
constant $C_{N}$ does exist by Ni's inequality \cite{Ni} (see also 
\cite[Lemma 1]{Su-Wang-Will p}) and the continuous embedding $H_{V,\mathrm{r}%
}^{1}\hookrightarrow D_{\mathrm{rad}}^{1,2}(\mathbb{R}^{N})$.\smallskip

Recall from assumption $\left( \mathbf{K}\right) $ that $K\in L_{\mathrm{loc}%
}^{s}\left( \left( 0,+\infty \right) \right) $ for some $s>\frac{2N}{N+2}\ (=%
\frac{2^{*}}{2^{*}-1})$.

\begin{lem}
\label{Lem(corone)}Let $R>r>0$ and $1<q<\infty $. Then there exists $\tilde{C%
}=\tilde{C}\left( N,r,R,q,s\right) >0$ such that $\forall u\in H_{V,\mathrm{r%
}}^{1}$ and $\forall h\in H_{V}^{1}$ one has 
\[
\frac{\int_{B_{R}\setminus B_{r}}K\left( \left| x\right| \right) \left|
u\right| ^{q-1}\left| h\right| dx}{\tilde{C}\left\| K\left( \left| \cdot
\right| \right) \right\| _{L^{s}(B_{R}\setminus B_{r})}}\leq \left\{ 
\begin{array}{ll}
\left( \int_{B_{R}\setminus B_{r}}\left| u\right| ^{2}dx\right) ^{\frac{q-1}{%
2}}\left\| h\right\| \medskip  & \text{if }q\leq \tilde{q} \\ 
\left( \int_{B_{R}\setminus B_{r}}\left| u\right| ^{2}dx\right) ^{\frac{%
\tilde{q}-1}{2}}\left\| u\right\| ^{q-\tilde{q}}\left\| h\right\| \quad
\medskip  & \text{if }q>\tilde{q}
\end{array}
\right. 
\]
where $\tilde{q}:=2\left( 1+\frac{1}{N}-\frac{1}{s}\right) $ (note that $s>%
\frac{2N}{N+2}$ implies $\tilde{q}>1$).
\end{lem}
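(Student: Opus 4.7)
The plan is to combine a three-factor Hölder inequality on the annulus $\Omega:=B_R\setminus B_r$ with the Sobolev bound (\ref{Sobolev}) and, in the supercritical regime, with the radial pointwise estimate (\ref{PointwiseEstimate}). The threshold $\tilde q=2(1+1/N-1/s)$ has a clean algebraic meaning: setting $\alpha:=2/(q-1)$ and $\beta:=2^{*}$, the identity $1/s+1/\alpha+1/\beta=1$ is equivalent to $q=\tilde q$. This identifies $\tilde q$ as the unique exponent at which the three natural Lebesgue scales (the one coming from $K\in L^{s}$, the one from $\|u\|_{L^{2}}^{q-1}$, and the endpoint Sobolev scale for $h$) balance without loss.

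For the subcritical range $1<q\leq\tilde q$, I would set $\alpha:=2/(q-1)$ and let $\beta$ be defined by $1/\beta=1-1/s-(q-1)/2$. The hypothesis $q\leq\tilde q$ is precisely $1/\beta\geq 1/2^{*}$, i.e.\ $\beta\leq 2^{*}$; moreover $\tilde q<3-2/s$ (since $N\geq 3$), so $\alpha,\beta\in[1,\infty)$. Hölder with these exponents yields
\[
\int_{\Omega}K|u|^{q-1}|h|\,dx\leq \|K(|\cdot|)\|_{L^{s}(\Omega)}\,\|u\|_{L^{2}(\Omega)}^{q-1}\,\|h\|_{L^{\beta}(\Omega)},
\]
and since $|\Omega|<\infty$ and $\beta\leq 2^{*}$, a further Hölder step combined with (\ref{Sobolev}) bounds $\|h\|_{L^{\beta}(\Omega)}$ by $|\Omega|^{1/\beta-1/2^{*}}S_{N}\|h\|$. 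This proves the first inequality of the lemma.

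For the supercritical range $q>\tilde q$, I would reduce to the previous step at $q=\tilde q$ by peeling off a factor $|u|^{q-\tilde q}$ via the radial estimate (\ref{PointwiseEstimate}): since $|x|\geq r$ on $\Omega$ and $q-\tilde q>0$,
\[
|u(x)|^{q-\tilde q}\leq C_{N}^{q-\tilde q}\,r^{-\frac{N-2}{2}(q-\tilde q)}\,\|u\|^{q-\tilde q}.
\]
Writing $|u|^{q-1}=|u|^{\tilde q-1}|u|^{q-\tilde q}$, pulling this uniform bound out of the integral, and invoking the subcritical case at the critical value $q=\tilde q$ then produces the second inequality. The derivation is largely mechanical; the only genuinely delicate point is the algebraic identification of $\tilde q$ in the first paragraph, which both dictates the dichotomy in the statement and forces the Hölder triple to lie in the admissible range.
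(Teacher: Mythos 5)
Your argument is correct and is essentially the paper's proof in slightly repackaged form: both rest on H\"{o}lder's inequality with the exponent triple $\left(s,\tfrac{2}{q-1},\beta\right)$ (the paper performs it as two successive two-factor applications with $\sigma=2N/(N+2)$), the Sobolev bound (\ref{Sobolev}) for $h$, the finiteness of $\left|B_{R}\setminus B_{r}\right|$ to absorb the exponent slack, and the pointwise estimate (\ref{PointwiseEstimate}) to peel off $\left|u\right|^{q-\tilde{q}}$ when $q>\tilde{q}$. The only cosmetic difference is that the paper keeps $h$ in $L^{2^{*}}$ throughout and uses the finite measure of the annulus to lower the integrability exponent of $u$ from $2\frac{q-1}{\tilde{q}-1}$ down to $2$, whereas you keep $u$ in $L^{2}$ and use the finite measure to pass $h$ from $L^{\beta}$ up to $L^{2^{*}}$.
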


%TCIMACRO{\TeXButton{Proof}{\proof}}
%BeginExpansion
\proof%
%EndExpansion
For simplicity, we denote by $\sigma $ the H\"{o}lder-conjugate exponent of $%
2^{*}$, i.e., $\sigma :=2N/\left( N+2\right) $. By H\"{o}lder inequality
(note that $\frac{s}{\sigma }>1$), we have 
\begin{eqnarray*}
&&\int_{B_{R}\setminus B_{r}}K\left( \left| x\right| \right) \left| u\right|
^{q-1}\left| h\right| dx \\
&\leq & \left( \int_{B_{R}\setminus B_{r}}K\left(
\left| x\right| \right) ^{\sigma }\left| u\right| ^{\left( q-1\right) \sigma
}dx\right) ^{\frac{1}{\sigma }}\left( \int_{B_{R}\setminus B_{r}}\left|
h\right| ^{2^{*}}dx\right) ^{\frac{1}{2^{*}}} \\
&\leq & \left( \left( \int_{B_{R}\setminus B_{r}}K\left( \left| x\right|
\right) ^{s}dx\right) ^{\frac{\sigma }{s}}\left( \int_{B_{R}\setminus
B_{r}}\left| u\right| ^{\left( q-1\right) \sigma \left( \frac{s}{\sigma }%
\right) ^{\prime }}dx\right) ^{\frac{1}{\left( \frac{s}{\sigma }\right)
^{\prime }}}\right) ^{\frac{1}{\sigma }}S_{N}\left\| h\right\| \\
&\leq & S_{N}\left\| K\left( \left| \cdot \right| \right) \right\|
_{L^{s}(B_{R}\setminus B_{r})}\left\| h\right\| \left( \int_{B_{R}\setminus
B_{r}}\left| u\right| ^{2\frac{q-1}{\tilde{q}-1}}dx\right) ^{\frac{\tilde{q}%
-1}{2}},
\end{eqnarray*}
where we computed $\sigma \left( \frac{s}{\sigma }\right) ^{\prime }=\frac{%
2Ns}{\left( N+2\right) s-2N}=\frac{2}{\tilde{q}-1}$. If $q\leq \tilde{q}$,
then we get 
\begin{eqnarray*}
&&\int_{B_{R}\setminus B_{r}}K\left( \left| x\right| \right) \left| u\right|
^{q-1}\left| h\right| dx \\
&\leq &S_{N}\left\| K\left( \left| \cdot \right|
\right) \right\| _{L^{s}(B_{R}\setminus B_{r})}\left\| h\right\| \left(
\left| B_{R}\setminus B_{r}\right| ^{1-\frac{q-1}{\tilde{q}-1}}\left(
\int_{B_{R}\setminus B_{r}}\left| u\right| ^{2}dx\right) ^{\frac{q-1}{\tilde{%
q}-1}}\right) ^{\frac{\tilde{q}-1}{2}} \\
&=&S_{N}\left\| K\left( \left| \cdot \right| \right) \right\|
_{L^{s}(B_{R}\setminus B_{r})}\left\| h\right\| \left| B_{R}\setminus
B_{r}\right| ^{\frac{\tilde{q}-q}{2}}\left( \int_{B_{R}\setminus
B_{r}}\left| u\right| ^{2}dx\right) ^{\frac{q-1}{2}}.
\end{eqnarray*}
Otherwise, if $q>\tilde{q}$, then by (\ref{PointwiseEstimate}) we obtain 
\begin{eqnarray*}
&&\int_{B_{R}\setminus B_{r}}K\left( \left| x\right| \right) \left| u\right|
^{q-1}\left| h\right| dx \\
&\leq &S_{N}\left\| K\left( \left| \cdot \right|
\right) \right\| _{L^{s}(B_{R}\setminus B_{r})}\left\| h\right\| \left(
\int_{B_{R}\setminus B_{r}}\left| u\right| ^{2\frac{q-1}{\tilde{q}-1}%
-2}\left| u\right| ^{2}dx\right) ^{\frac{\tilde{q}-1}{2}} \\
&\leq &S_{N}\left\| K\left( \left| \cdot \right| \right) \right\|
_{L^{s}(B_{R}\setminus B_{r})}\left\| h\right\| \left( \left( \frac{%
C_{N}\left\| u\right\| }{r^{\frac{N-2}{2}}}\right) ^{2\frac{q-\tilde{q}}{%
\tilde{q}-1}}\int_{B_{R}\setminus B_{r}}\left| u\right| ^{2}dx\right) ^{%
\frac{\tilde{q}-1}{2}} \\
&=&S_{N}\left\| K\left( \left| \cdot \right| \right) \right\|
_{L^{s}(B_{R}\setminus B_{r})}\left\| h\right\| \left( \frac{C_{N}\left\|
u\right\| }{r^{\frac{N-2}{2}}}\right) ^{q-\tilde{q}}\left(
\int_{B_{R}\setminus B_{r}}\left| u\right| ^{2}dx\right) ^{\frac{\tilde{q}-1%
}{2}}.
\end{eqnarray*}
This concludes the proof.%
%TCIMACRO{\TeXButton{End Proof}{\endproof}}
%BeginExpansion
\endproof%
%EndExpansion
%\bigskip

For future reference, we recall here some results from \cite{BPR} concerning
the sum space 
\[
L_{K}^{p_{1}}+L_{K}^{p_{2}}:=L_{K}^{p_{1}}\left( \mathbb{R}^{N}\right)
+L_{K}^{p_{2}}\left( \mathbb{R}^{N}\right) :=\left\{ u_{1}+u_{2}:u_{1}\in
L_{K}^{p_{1}}\left( \mathbb{R}^{N}\right) ,\,u_{2}\in L_{K}^{p_{2}}\left( \mathbb{R%
}^{N}\right) \right\} , 
\]
where we assume $1<p_{1}\leq p_{2}<\infty $. Such a space can be
characterized as the set of the measurable mappings $u:\mathbb{R}%
^{N}\rightarrow \mathbb{R}$ for which there exists a measurable set $E\subseteq 
\mathbb{R}^{N}$ such that $u\in L_{K}^{p_{1}}\left( E\right) \cap
L_{K}^{p_{2}}\left( E^{c}\right) $ (of course $L_{K}^{p_{1}}\left( E\right)
:=L^{p_{1}}(E,K\left( \left| x\right| \right) dx)$, and so for $%
L_{K}^{p_{2}}\left( E^{c}\right) $). It is a Banach space with respect to
the norm 
\[
\left\| u\right\| _{L_{K}^{p_{1}}+L_{K}^{p_{2}}}:=\inf_{u_{1}+u_{2}=u}\max
\left\{ \left\| u_{1}\right\| _{L_{K}^{p_{1}}},\left\| u_{2}\right\|
_{L_{K}^{p_{2}}}\right\} 
\]
and the continuous embedding $L_{K}^{p}\hookrightarrow
L_{K}^{p_{1}}+L_{K}^{p_{2}}$ holds for all $p\in \left[ p_{1},p_{2}\right] $.

\begin{prop}[see {\cite[Proposition 2.7]{BPR}}] 
\label{Prop(->0)}
Let $\left\{ u_{n}\right\}
\subseteq L_{K}^{p_{1}}+L_{K}^{p_{2}}$ be a sequence such that $\forall
\varepsilon >0$ there exist $n_{\varepsilon }>0$ and a sequence of
measurable sets $E_{\varepsilon ,n}\subseteq \mathbb{R}^{N}$ satisfying 
\begin{equation}
\forall n>n_{\varepsilon },\quad \int_{E_{\varepsilon ,n}}K\left( \left|
x\right| \right) \left| u_{n}\right| ^{p_{1}}dx+\int_{E_{\varepsilon
,n}^{c}}K\left( \left| x\right| \right) \left| u_{n}\right|
^{p_{2}}dx<\varepsilon .  \label{Prop(->0): cond}
\end{equation}
Then $u_{n}\rightarrow 0$ in $L_{K}^{p_{1}}+L_{K}^{p_{2}}$.
\end{prop}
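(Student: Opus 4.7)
The proof proposal is essentially to exploit the definition of the sum norm as an infimum over decompositions, and to use the obvious decomposition suggested by the sets $E_{\varepsilon,n}$ themselves.

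The plan is as follows. Fix $\varepsilon > 0$ and let $n_\varepsilon$ and $\{E_{\varepsilon,n}\}$ be as provided by the hypothesis. For each $n > n_\varepsilon$, I would split
\[
u_n = u_n \chi_{E_{\varepsilon,n}} + u_n \chi_{E_{\varepsilon,n}^c} =: u_{n,1} + u_{n,2},
\]
so that $u_{n,1}$ is supported in $E_{\varepsilon,n}$ and $u_{n,2}$ in $E_{\varepsilon,n}^c$. Because the two integrals in (\ref{Prop(->0): cond}) are nonnegative and sum to less than $\varepsilon$, we have simultaneously
\[
\int_{\mathbb{R}^N} K(|x|)\,|u_{n,1}|^{p_1}\,dx = \int_{E_{\varepsilon,n}} K(|x|)\,|u_n|^{p_1}\,dx < \varepsilon
\]
and similarly $\int_{\mathbb{R}^N} K(|x|)\,|u_{n,2}|^{p_2}\,dx < \varepsilon$. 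In particular $u_{n,1} \in L_K^{p_1}$ and $u_{n,2} \in L_K^{p_2}$, with norms bounded by $\varepsilon^{1/p_1}$ and $\varepsilon^{1/p_2}$ respectively.

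From the definition of the sum norm as an infimum, this decomposition gives
\[
\|u_n\|_{L_K^{p_1}+L_K^{p_2}} \le \max\bigl\{\|u_{n,1}\|_{L_K^{p_1}},\,\|u_{n,2}\|_{L_K^{p_2}}\bigr\} \le \max\bigl\{\varepsilon^{1/p_1},\,\varepsilon^{1/p_2}\bigr\}
\]
for all $n > n_\varepsilon$. Since the right-hand side depends only on $\varepsilon$ and tends to $0$ as $\varepsilon \to 0^+$, this yields $\|u_n\|_{L_K^{p_1}+L_K^{p_2}} \to 0$, i.e.\ the desired convergence.

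There is essentially no obstacle here: the content of the proposition is that the infimum-type norm on the sum space is controlled by \emph{any} admissible decomposition, and the hypothesis is tailor-made to supply one. The only small care needed is to notice that the hypothesis bounds the \emph{sum} of the two integrals by $\varepsilon$, so each one is individually less than $\varepsilon$, and that $\max\{\varepsilon^{1/p_1},\varepsilon^{1/p_2}\} \to 0$ regardless of whether $p_1 \le p_2$ or not (here we use $p_1, p_2 \in (1,\infty)$). No additional structure of $K$, radial symmetry, or measurability beyond what is assumed on $E_{\varepsilon,n}$ is required.
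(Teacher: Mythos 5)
Your proof is correct: the paper does not actually prove this proposition (it is quoted from \cite[Proposition 2.7]{BPR}), and your argument via the decomposition $u_n=u_n\chi_{E_{\varepsilon,n}}+u_n\chi_{E_{\varepsilon,n}^c}$ together with the infimum definition of $\|\cdot\|_{L_K^{p_1}+L_K^{p_2}}$ is exactly the standard route. All the details check out, including the observation that each integral in (\ref{Prop(->0): cond}) is separately less than $\varepsilon$ and that $\max\{\varepsilon^{1/p_1},\varepsilon^{1/p_2}\}\to 0$ as $\varepsilon\to 0^+$.
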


\begin{prop}[{\cite[Propositions 2.17 and 2.14, Corollary 2.19]{BPR}}]
\label{Prop(L+L)}
Let $E\subseteq \mathbb{R}^{N}$ be a measurable set.

\begin{itemize}
\item[(i)]  If $\int_{E}K\left( \left| x\right| \right) dx<\infty $, then $%
L_{K}^{p_{1}}+L_{K}^{p_{2}}$ is continuously embedded into $%
L_{K}^{p_{1}}\left( E\right) $.

\item[(ii)]  Every $u\in (L_{K}^{p_{1}}+L_{K}^{p_{2}})\cap L^{\infty }\left(
E\right) $ satisfies 
\begin{equation}
\left\| u\right\| _{L_{K}^{p_{2}}\left( E\right) }^{p_{2}/p_{1}}\leq \left(
\left\| u\right\| _{L^{\infty }\left( E\right) }^{p_{2}/p_{1}-1}+\left\|
u\right\| _{L_{K}^{p_{2}}\left( E\right) }^{p_{2}/p_{1}-1}\right) \left\|
u\right\| _{L_{K}^{p_{1}}+L_{K}^{p_{2}}}.  \label{PropLL:1}
\end{equation}
If moreover $\left\| u\right\| _{L^{\infty }\left( E\right) }\leq 1$, then 
\begin{equation}
\left\| u\right\| _{L_{K}^{p_{2}}\left( E\right) }\leq 2\left\| u\right\|
_{L_{K}^{p_{1}}+L_{K}^{p_{2}}}+1.  \label{PropLL:2}
\end{equation}
\end{itemize}
\end{prop}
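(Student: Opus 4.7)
For (i), I would fix a common radius $R > 0$ at which both $\mathcal{S}_0(q_1, \cdot)$ and $\mathcal{S}_\infty(q_2, \cdot)$ are finite. The monotonicity of these functions together with Lemma \ref{Lem(corone)} (applied with $u = h$ on an annulus) ensures that $\mathcal{S}_0(q_1, R_1) < \infty$ propagates to any larger radius, so taking $R := \max\{R_1,R_2\}$ works. Then for any $u \in H^1_{V,\mathrm{r}}$, the decomposition $u = u\mathbf{1}_{B_R} + u\mathbf{1}_{B_R^c}$ together with the very definitions of $\mathcal{S}_0, \mathcal{S}_\infty$ yields
\[
\|u\|_{L^{q_1}_K + L^{q_2}_K} \le \max\bigl\{\mathcal{S}_0(q_1,R)^{1/q_1}, \mathcal{S}_\infty(q_2,R)^{1/q_2}\bigr\} \|u\|,
\]
proving continuity.

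For (ii), I plan to invoke Proposition \ref{Prop(->0)}. Given a bounded sequence in $H^1_{V,\mathrm{r}}$, one may pass to a weakly convergent subsequence and, by subtracting the weak limit, reduce to showing that a sequence $\{v_n\}$ with $v_n \rightharpoonup 0$ in $H^1_{V,\mathrm{r}}$ and $\|v_n\| \le M$ tends to $0$ in $L^{q_1}_K + L^{q_2}_K$. Fix $\varepsilon > 0$ and use hypothesis $(\mathcal{S}_{q_1,q_2}'')$ to pick $R_0 > 0$ small and $R_\infty > R_0$ large so that $\mathcal{S}_0(q_1,R_0) M^{q_1} < \varepsilon/3$ and $\mathcal{S}_\infty(q_2,R_\infty) M^{q_2} < \varepsilon/3$. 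Taking $E_{\varepsilon,n} := B_{R_\infty}$, the tail integral over $E_{\varepsilon,n}^c$ is at most $\varepsilon/3$ uniformly in $n$; inside $E_{\varepsilon,n}$, the contribution from $B_{R_0}$ is again at most $\varepsilon/3$, while the annular contribution from $B_{R_\infty} \setminus B_{R_0}$ can be made $<\varepsilon/3$ for $n$ large, because Lemma \ref{Lem(corone)} controls $\int_{B_{R_\infty}\setminus B_{R_0}} K|v_n|^{q_1} dx$ by a positive power of $\|v_n\|_{L^2(B_{R_\infty}\setminus B_{R_0})}$ (times bounded powers of $\|v_n\|$), and $v_n \rightharpoonup 0$ in $D^{1,2}_{\mathrm{rad}}$ combined with Rellich--Kondrachov on bounded annuli yields $\|v_n\|_{L^2(B_{R_\infty}\setminus B_{R_0})} \to 0$.

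For (iii), my approach is by contradiction, leveraging Proposition \ref{Prop(L+L)}. Necessity of $(\mathcal{S}_{q_1,q_2}')$ at the origin is immediate from part (i) of that proposition: since $K(|\cdot|) \in L^1(B_1)$, the composition $H^1_{V,\mathrm{r}} \hookrightarrow L^{q_1}_K + L^{q_2}_K \hookrightarrow L^{q_1}_K(B_1)$ gives $\mathcal{S}_0(q_1, 1) < \infty$. The necessity at infinity is more delicate and I expect this to be the main obstacle: using the radial pointwise estimate (\ref{PointwiseEstimate}), for $\|u\|=1$ one has $\|u\|_{L^\infty(B_R^c)} \le C_N R^{-(N-2)/2} \le 1$ for $R$ large, and then inequality (\ref{PropLL:2}) of Proposition \ref{Prop(L+L)}(ii), together with the already-proven continuous embedding, bounds $\|u\|_{L^{q_2}_K(B_R^c)}$ by a constant independent of $u$. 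For the compact-embedding necessity of $(\mathcal{S}_{q_1,q_2}'')$ at infinity, suppose $\mathcal{S}_\infty(q_2,R_n) \ge \delta > 0$ along $R_n \to \infty$ with witnesses $\|u_n\|=1$; compactness gives $u_n \to u$ strongly in $L^{q_1}_K + L^{q_2}_K$, and I would apply Proposition \ref{Prop(L+L)}(ii) to $v_n := u_n \mathbf{1}_{B_{R_n}^c}$. The pointwise estimate forces $\|v_n\|_{L^\infty} \to 0$, while the triangle inequality together with a decomposition $u = u_1 + u_2 \in L^{q_1}_K \times L^{q_2}_K$ and dominated convergence shows $\|v_n\|_{L^{q_1}_K + L^{q_2}_K} \to 0$; an elementary two-case analysis applied to (\ref{PropLL:1}) then forces $\|v_n\|_{L^{q_2}_K} \to 0$, contradicting $\delta > 0$. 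The analogous necessity at the origin uses dominated convergence in $L^{q_1}_K(B_1)$, guaranteed by $K(|\cdot|) \in L^1(B_1)$.
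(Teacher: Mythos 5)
Your proposal does not address the statement you were asked to prove. Proposition \ref{Prop(L+L)} is a pair of elementary facts about the sum space itself: (i) that $L_{K}^{p_{1}}+L_{K}^{p_{2}}$ embeds continuously into $L_{K}^{p_{1}}(E)$ whenever $\int_{E}K\left( \left| x\right| \right) dx<\infty $ (this is essentially a H\"{o}lder-inequality argument applied to an arbitrary decomposition $u=u_{1}+u_{2}$, followed by taking the infimum over decompositions), and (ii) the pointwise norm inequalities (\ref{PropLL:1})--(\ref{PropLL:2}) for $u\in (L_{K}^{p_{1}}+L_{K}^{p_{2}})\cap L^{\infty }(E)$. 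Nothing in your text touches either of these claims. What you have instead sketched is a proof of Theorem \ref{THM(cpt)}: your three parts track the conditions $(\mathcal{S}_{q_{1},q_{2}}^{\prime })$, $(\mathcal{S}_{q_{1},q_{2}}^{\prime \prime })$ and the necessity claim (iii) of that theorem, and they use the functions $\mathcal{S}_{0}$, $\mathcal{S}_{\infty }$, Lemma \ref{Lem(corone)} and the pointwise estimate (\ref{PointwiseEstimate}), none of which appear in, or are relevant to, Proposition \ref{Prop(L+L)}. The mismatch is already visible in the structure: the proposition has two parts, while you prove three.

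The confusion is made worse by the fact that your part (iii) explicitly \emph{invokes} Proposition \ref{Prop(L+L)} (both its part (i) and the inequalities (\ref{PropLL:1})--(\ref{PropLL:2})) as a known tool; if this text were offered as a proof of that proposition it would be circular. For the record, the paper itself does not prove Proposition \ref{Prop(L+L)} at all: it is imported verbatim from \cite{BPR} (Propositions 2.17 and 2.14 and Corollary 2.19 there), so a correct submission would either reproduce the arguments of \cite{BPR} or give an independent proof of the two stated items. As it stands, your proposal must be redone from scratch for this statement.
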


Recall the definitions (\ref{S_o :=})-(\ref{S_i :=}) of the functions $\mathcal{S}_{0}$ and $%
\mathcal{S}_{\infty }$.%\bigskip

%\noindent \textbf{Proof of Theorem \ref{THM(cpt)}.}\quad We prove each part of the theorem separately.\smallskip
\proof[Proof of Theorem \ref{THM(cpt)}]
We prove each part of the theorem separately.\smallskip

\noindent (i) By the monotonicity of $\mathcal{S}_{0}$ and $\mathcal{S}%
_{\infty }$, it is not restrictive to assume $R_{1}<R_{2}$ in hypothesis $%
\left( \mathcal{S}_{q_{1},q_{2}}^{\prime }\right) $. In order to prove the
continuous embedding, let $u\in H_{V,\mathrm{r}}^{1}$, $u\neq 0$. Then we
have 
\begin{equation}
\int_{B_{R_{1}}}K\left( \left| x\right| \right) \left| u\right|
^{q_{1}}dx=\left\| u\right\| ^{q_{1}}\int_{B_{R_{1}}}K\left( \left| x\right|
\right) \frac{\left| u\right| ^{q_{1}}}{\left\| u\right\| ^{q_{1}}}dx\leq
\left\| u\right\| ^{q_{1}}\mathcal{S}_{0}\left( q_{1},R_{1}\right) 
\label{pf1}
\end{equation}
and, similarly, 
\begin{equation}
\int_{B_{R_{2}}^{c}}K\left( \left| x\right| \right) \left| u\right|
^{q_{2}}dx\leq \left\| u\right\| ^{q_{2}}\mathcal{S}_{\infty }\left(
q_{2},R_{2}\right) .  \label{pf2}
\end{equation}
On the other hand, by Lemma \ref{Lem(corone)} and the continuous embedding $%
D_{\mathrm{rad}}^{1,2}(\mathbb{R}^{N})\hookrightarrow L_{\mathrm{loc}}^{2}(\mathbb{%
R}^{N})$, we deduce that there exists a constant $C_{1}>0$, independent from 
$u$, such that 
\begin{equation}
\int_{B_{R_{2}}\setminus B_{R_{1}}}K\left( \left| x\right| \right) \left|
u\right| ^{q_{1}}dx\leq C_{1}\left\| u\right\| ^{q_{1}}.  \label{pf3}
\end{equation}
Hence $u\in L_{K}^{q_{1}}(B_{R_{2}})\cap L_{K}^{q_{2}}(B_{R_{2}}^{c})$ and
thus $u\in L_{K}^{q_{1}}+L_{K}^{q_{2}}$. Moreover, if $u_{n}\rightarrow 0$
in $H_{V,\mathrm{r}}^{1}$, then, using (\ref{pf1}), (\ref{pf2}) and (\ref
{pf3}), we get 
\[
\int_{B_{R_{2}}}K\left( \left| x\right| \right) \left| u_{n}\right|
^{q_{1}}dx+\int_{B_{R_{2}}^{c}}K\left( \left| x\right| \right) \left|
u_{n}\right| ^{q_{2}}dx=o\left( 1\right) _{n\rightarrow \infty },
\]
which means $u_{n}\rightarrow 0$ in $L_{K}^{q_{1}}+L_{K}^{q_{2}}$ by
Proposition \ref{Prop(->0)}. \emph{\smallskip }

\noindent (ii) Assume hypothesis $\left( \mathcal{S}_{q_{1},q_{2}}^{\prime
\prime }\right) $. Let $\varepsilon >0$ and let $u_{n}\rightharpoonup 0$ in $%
H_{V,\mathrm{r}}^{1}$. Then $\left\{ \left\| u_{n}\right\| \right\} $ is
bounded and, arguing as for (\ref{pf1}) and (\ref{pf2}), we can take $%
r_{\varepsilon }>0$ and $R_{\varepsilon }>r_{\varepsilon }$ such that for
all $n$ one has 
\[
\int_{B_{r_{\varepsilon }}}K\left( \left| x\right| \right) \left|
u_{n}\right| ^{q_{1}}dx\leq \left\| u_{n}\right\| ^{q_{1}}\mathcal{S}%
_{0}\left( q_{1},r_{\varepsilon }\right) \leq \sup_{n}\left\| u_{n}\right\|
^{q_{1}}\mathcal{S}_{0}\left( q_{1},r_{\varepsilon }\right) <\frac{%
\varepsilon }{3}
\]
and 
\[
\int_{B_{R_{\varepsilon }}^{c}}K\left( \left| x\right| \right) \left|
u_{n}\right| ^{q_{2}}dx\leq \sup_{n}\left\| u_{n}\right\| ^{q_{2}}\mathcal{S}%
_{\infty }\left( q_{2},R_{\varepsilon }\right) <\frac{\varepsilon }{3}.
\]
Using Lemma \ref{Lem(corone)} and the boundedness of $\left\{ \left\|
u_{n}\right\| \right\} $ again, we infer that there exist two constants $%
C_{2},l>0$, independent from $n$, such that 
\[
\int_{B_{R_{\varepsilon }}\setminus B_{r_{\varepsilon }}}K\left( \left|
x\right| \right) \left| u_{n}\right| ^{q_{1}}dx\leq C_{2}\left(
\int_{B_{R_{\varepsilon }}\setminus B_{r_{\varepsilon }}}\left| u_{n}\right|
^{2}dx\right) ^{l},
\]
where 
\[
\int_{B_{R_{\varepsilon }}\setminus B_{r_{\varepsilon }}}\left| u_{n}\right|
^{2}dx\rightarrow 0\quad \text{as }n\rightarrow \infty \quad \text{(}%
\varepsilon ~\text{fixed)}
\]
thanks to the compactness of the embedding $D_{\mathrm{rad}}^{1,2}(\mathbb{R}%
^{N})\hookrightarrow L_{\mathrm{loc}}^{2}(\mathbb{R}^{N})$. Therefore we obtain 
\[
\int_{B_{R_{\varepsilon }}}K\left( \left| x\right| \right) \left|
u_{n}\right| ^{q_{1}}dx+\int_{B_{R_{\varepsilon }}^{c}}K\left( \left|
x\right| \right) \left| u_{n}\right| ^{q_{2}}dx<\varepsilon 
\]
for all $n$ sufficiently large, which means $u_{n}\rightarrow 0$ in $%
L_{K}^{q_{1}}+L_{K}^{q_{2}}$ (Proposition \ref{Prop(->0)}). This concludes
the proof of part (ii).\smallskip

\noindent (iii) First we observe that $K\left( \left| \cdot \right| \right)
\in L^{1}(B_{1})$ and assumption $\left( \mathbf{K}\right) $ imply $K\left(
\left| \cdot \right| \right) \in L_{\mathrm{loc}}^{1}(\mathbb{R}^{N})$. Assume $%
H_{V,\mathrm{r}}^{1}\hookrightarrow L_{K}^{q_{1}}+L_{K}^{q_{2}}$ with $%
q_{1}\leq q_{2}$. Fix $R_{1}>0$. Then, by (i) of Proposition \ref{Prop(L+L)}%
, there exist two constants $c_{1},c_{2}>0$ such that $\forall u\in H_{V,%
\mathrm{r}}^{1}$ we have 
\[
\int_{B_{R_{1}}}K\left( \left| x\right| \right) \left| u\right|
^{q_{1}}dx\leq c_{1}\left\| u\right\|
_{L_{K}^{q_{1}}+L_{K}^{q_{2}}}^{q_{1}}\leq c_{2}\left\| u\right\| ^{q_{1}}, 
\]
which implies $\mathcal{S}_{0}\left( q_{1},R_{1}\right) \leq c_{2}$. By (\ref
{PointwiseEstimate}), fix $R_{2}>0$ such that every $u\in H_{V,\mathrm{r}%
}^{1}$ with $\left\| u\right\| =1$ satisfies $\left| u\left( x\right)
\right| \leq 1$ almost everywhere on $B_{R_{2}}^{c}$. Then, by (\ref
{PropLL:2}), we have 
\[
\int_{B_{R_{2}}^{c}}K\left( \left| x\right| \right) \left| u\right|
^{q_{2}}dx\leq \left( 2\left\| u\right\|
_{L_{K}^{q_{1}}+L_{K}^{q_{2}}}+1\right) ^{q_{2}}\leq \left( c_{3}\left\|
u\right\| +1\right) ^{q_{2}}=\left( c_{3}+1\right) ^{q_{2}} 
\]
for some constant $c_{3}>0$. This gives $\mathcal{S}_{\infty }\left(
q_{2},R_{2}\right) <\infty $ and thus $\left( \mathcal{S}_{q_{1},q_{2}}^{%
\prime }\right) $ holds (with $R_{1}>0$ arbitrary and $R_{2}$ large enough).
Now assume that the embedding $H_{V,\mathrm{r}}^{1}\hookrightarrow
L_{K}^{q_{1}}+L_{K}^{q_{2}}$ is compact and, by contradiction, that $%
\lim_{R\rightarrow 0^{+}}\mathcal{S}_{0}\left( q_{1},R\right) >\varepsilon
_{1}>0$ (the limit exists by monotonicity). Then for every $n\in \mathbb{N}%
\setminus \left\{ 0\right\} $ we have $\mathcal{S}_{0}\left(
q_{1},1/n\right) >\varepsilon _{1}$ and thus there exists $u_{n}\in H_{V,%
\mathrm{r}}^{1}$ such that $\left\| u_{n}\right\| =1$ and 
\[
\int_{B_{1/n}}K\left( \left| x\right| \right) \left| u_{n}\right|
^{q_{1}}dx>\varepsilon _{1}. 
\]
Since $\left\{ u_{n}\right\} $ is bounded in $H_{V,\mathrm{r}}^{1}$, by the
compactness assumption together with the continuous embedding $%
L_{K}^{q_{1}}+L_{K}^{q_{2}}\hookrightarrow L_{K}^{q_{1}}(B_{1})$ ((i) of
Proposition \ref{Prop(L+L)}), we get that there exists $u\in H_{V,\mathrm{r}%
}^{1}$ such that, up to a subsequence, $u_{n}\rightarrow u$ in $L_{K}^{q_{1}}(B_{1})$. This
implies 
\[
\int_{B_{1/n}}K\left( \left| x\right| \right) \left| u_{n}\right|
^{q_{1}}dx\rightarrow 0\quad \text{as }n\rightarrow \infty , 
\]
which is a contradiction. Similarly, if $\lim_{R\rightarrow +\infty }%
\mathcal{S}_{\infty }\left( q_{2},R\right) >\varepsilon _{2}>0$, then there
exists a sequence $\left\{ u_{n}\right\} \subset H_{V,\mathrm{r}}^{1}$ such
that $\left\| u_{n}\right\| =1$ and 
\begin{equation}
\int_{B_{n}^{c}}K\left( \left| x\right| \right) \left| u_{n}\right|
^{q_{2}}dx>\varepsilon _{2}.  \label{>eps2}
\end{equation}
Moreover, we can assume that $\exists u\in H_{V,\mathrm{r}}^{1}$ such that $%
u_{n}\rightharpoonup u$ in $H_{V,\mathrm{r}}^{1}$, $u_{n}\rightarrow u$ in $%
L_{K}^{q_{1}}+L_{K}^{q_{2}}$ and 
\begin{equation}
\left\| u_{n}-u\right\| \leq \left\| u_{n}\right\| +\left\| u\right\| \leq
1+\liminf_{n\rightarrow \infty }\left\| u_{n}\right\| =2.  \label{un-u}
\end{equation}
Now, by (\ref{un-u}) and (\ref{PointwiseEstimate}), fix $R_{2}>0$ such that $%
\left| u_{n}\left( x\right) -u\left( x\right) \right| \leq 1$ almost
everywhere on $B_{R_{2}}^{c}$. Then $\left\{ u_{n}-u\right\} $ is bounded in 
$L_{K}^{q_{2}}(B_{R_{2}}^{c})$ by (\ref{PropLL:2}) and therefore (\ref
{PropLL:1}) gives 
\[
\int_{B_{R_{2}}^{c}}K\left( \left| x\right| \right) \left| u_{n}-u\right|
^{q_{2}}dx\leq c_{4}\left( \left\| u_{n}-u\right\|
_{L_{K}^{q_{1}}+L_{K}^{q_{2}}}\right) ^{q_{1}}\rightarrow 0\quad \text{as }%
n\rightarrow \infty 
\]
for some constant $c_{4}>0$. 
Since $u\in L_{K}^{q_{2}}(B_{R_{2}}^{c})$ by (\ref{PointwiseEstimate}) and (\ref{PropLL:1}), 
this implies 
\[
\int_{B_{n}^{c}}K\left( \left| x\right| \right) \left| u_{n}\right|
^{q_{2}}dx\rightarrow 0\quad \text{as }n\rightarrow \infty , 
\]
which contradicts (\ref{>eps2}). Hence we conclude $\lim_{R\rightarrow 0^{+}}%
\mathcal{S}_{0}\left( q_{1},R\right) =\lim_{R\rightarrow +\infty }\mathcal{S}%
_{\infty }\left( q_{2},R\right)$ $=0$, which completes the proof.%
%TCIMACRO{\TeXButton{End Proof}{\endproof}}
%BeginExpansion
\endproof%
%EndExpansion
%\bigskip

\section{Proof of Theorems \ref{THM0}\thinspace -\thinspace \ref{THM3} \label%
{SEC:2}}

Assume $N\geq 3$ and let $V$ and $K$ be as in $\left( \mathbf{V}\right) $
and $\left( \mathbf{K}\right) $ with $V\left( r\right) <+\infty $.
As in the previous section, we fix a constant $S_{N}>0$ such that (\ref{Sobolev}) holds.

\begin{lem}
\label{Lem(Omega)}Let $\Omega \subseteq \mathbb{R}^{N}$ be a nonempty
measurable set and assume that 
\[
\Lambda :=\esssup_{x\in \Omega }\frac{K\left( \left| x\right|
\right) }{\left| x\right| ^{\alpha }V\left( \left| x\right| \right) ^{\beta }%
}<+\infty \quad \text{for some }0\leq \beta \leq 1\text{~and }\alpha \in 
\mathbb{R}.
\]
Let $u\in H_{V}^{1}$ and assume that there exist $\nu \in \mathbb{R}$ and $m>0$
such that 
\[
\left| u\left( x\right) \right| \leq \frac{m}{\left| x\right| ^{\nu }}\quad 
\text{almost everywhere on }\Omega .
\]
Then $\forall h\in H_{V}^{1}$ and $\forall q>\max \left\{ 1,2\beta \right\} $%
, one has 

$\displaystyle\int_{\Omega }K\left( \left| x\right| \right) \left| u\right| ^{q-1}\left|
h\right| dx$
\[
%\int_{\Omega }K\left( \left| x\right| \right) \left| u\right| ^{q-1}\left|h\right| dx
\leq \left\{ 
\begin{array}{ll}
\Lambda m^{q-1}S_{N}^{1-2\beta }\left( \int_{\Omega }\left| x\right| ^{\frac{%
\alpha -\nu \left( q-1\right) }{N+2\left( 1-2\beta \right) }2N}dx\right) ^{%
\frac{N+2\left( 1-2\beta \right) }{2N}}\left\| h\right\| \quad \medskip  & 
\text{if }0\leq \beta \leq \frac{1}{2} \\ 
\Lambda m^{q-2\beta }\left( \int_{\Omega }\left| x\right| ^{\frac{\alpha
-\nu \left( q-2\beta \right) }{1-\beta }}dx\right) ^{1-\beta }\left\|
u\right\| ^{2\beta -1}\left\| h\right\| \medskip  & \text{if }\frac{1}{2}%
<\beta <1 \\ 
\Lambda m^{q-2}\left( \int_{\Omega }\left| x\right| ^{2\alpha -2\nu \left(
q-2\right) }V\left( \left| x\right| \right) \left| u\right| ^{2}dx\right) ^{%
\frac{1}{2}}\left\| h\right\|  & \text{if }\beta =1.
\end{array}
\right. 
\]
\end{lem}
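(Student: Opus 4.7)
My plan is to treat all three cases uniformly up to a point: start by using the hypothesis on $K$ to replace $K(|x|)$ by $\Lambda |x|^{\alpha} V(|x|)^{\beta}$, and then exploit the pointwise estimate $|u(x)| \le m/|x|^{\nu}$ to extract from $|u|^{q-1}$ whatever power is needed to make the remaining factor integrable via Hölder against $\|u\|$ and $\|h\|$. Concretely, in the three cases I would split
\[
|u|^{q-1} = |u|^{q-1},\qquad |u|^{q-1} = |u|^{q-2\beta}\,|u|^{2\beta-1},\qquad |u|^{q-1}=|u|^{q-2}\,|u|,
\]
and bound the first factor in each split by $(m/|x|^\nu)^{q-1}$, $(m/|x|^\nu)^{q-2\beta}$, $(m/|x|^\nu)^{q-2}$ respectively. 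The conditions $q>1$, $q>2\beta$, $q>2$ (which in the first two cases reduce to $q>\max\{1,2\beta\}$) are precisely what make these exponents nonnegative.

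For $0\le \beta\le 1/2$, after the substitution I would write the remaining integrand as the product of three factors
\[
|x|^{\alpha-\nu(q-1)}\cdot \bigl(V\,|h|^{2}\bigr)^{\beta}\cdot |h|^{1-2\beta},
\]
and apply Hölder's inequality with exponents $p_1=\frac{2N}{N+2(1-2\beta)}$, $p_2=\frac{1}{\beta}$, $p_3=\frac{2^{*}}{1-2\beta}$. A routine verification shows $1/p_1+1/p_2+1/p_3=1$. The second factor then contributes $\bigl(\int V h^{2}dx\bigr)^{\beta}\le \|h\|^{2\beta}$, the third contributes $\|h\|_{L^{2^*}}^{1-2\beta}\le S_{N}^{1-2\beta}\|h\|^{1-2\beta}$ by Sobolev (\ref{Sobolev}), and these multiply to $S_N^{1-2\beta}\|h\|$, as claimed. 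The limiting cases $\beta=0$ and $\beta=1/2$ are interpreted by dropping the vanishing factor.

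For $1/2<\beta<1$, after extracting $(m/|x|^\nu)^{q-2\beta}$ I would rewrite
\[
V^{\beta}|u|^{2\beta-1}|h| = |x|^{\alpha-\nu(q-2\beta)}\cdot \bigl(V|u|^{2}\bigr)^{(2\beta-1)/2}\cdot V^{1/2}|h|,
\]
and apply three-factor Hölder with exponents $\frac{1}{1-\beta}$, $\frac{2}{2\beta-1}$, $2$ (whose reciprocals again sum to $1$). The middle factor yields $\|u\|^{2\beta-1}$ and the last factor yields $\|h\|$, giving the stated bound. The case $\beta=1$ is simplest: after extracting $(m/|x|^\nu)^{q-2}$ (using $q>2$), apply Cauchy--Schwarz to the product $|x|^{\alpha-\nu(q-2)} V^{1/2}|u|\cdot V^{1/2}|h|$, so that the second factor directly becomes $\|h\|$.

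The only real obstacle is bookkeeping: checking that in each of the three cases the chosen Hölder exponents are admissible (positive, reciprocals summing to $1$) and that the arithmetic of the exponents of $|x|$ after raising to $1/p_1$ produces exactly the exponents $\frac{2N(\alpha-\nu(q-1))}{N+2(1-2\beta)}$ and $\frac{\alpha-\nu(q-2\beta)}{1-\beta}$ appearing in the statement. The degenerate boundary values $\beta\in\{0,1/2,1\}$ must be read with the convention that a factor raised to the exponent $0$ disappears; under this convention the three cases match continuously at the thresholds.
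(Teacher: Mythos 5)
Your proposal is correct and follows essentially the same route as the paper's proof: the same splittings of $|u|^{q-1}$ in the three regimes, the same use of the pointwise decay to convert $|u|$ into a power of $|x|$, and the same pairing of $V^{\beta}$ with $\|h\|$ and $\|u\|$ via the $V$-weighted $L^{2}$ norm together with the Sobolev inequality. The only cosmetic difference is that you apply a single three-factor H\"older inequality where the paper iterates two two-factor H\"older inequalities, which yields identical exponents.
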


%TCIMACRO{\TeXButton{Proof}{\proof}}
%BeginExpansion
\begin{proof}
%\proof%
%EndExpansion
We distinguish several cases, where we will use H\"{o}lder inequality many
times, without explicitly noting it. \smallskip

\noindent \emph{Case }$\beta =0$\emph{. }%\smallskip

\noindent We have 
{\allowdisplaybreaks
\begin{eqnarray*}
\frac{1}{\Lambda }\int_{\Omega }K\left( \left| x\right| \right) \left|
u\right| ^{q-1}\left| h\right| dx 
&\leq & \int_{\Omega }\left| x\right|^{\alpha }\left| u\right| ^{q-1}\left| h\right| dx \\
&\leq & \left( \int_{\Omega
}\left( \left| x\right| ^{\alpha }\left| u\right| ^{q-1}\right) ^{\frac{2N}{%
N+2}}dx\right) ^{\frac{N+2}{2N}}\left( \int_{\Omega }\left| h\right|
^{2^{*}}dx\right) ^{\frac{1}{2^{*}}} \\
&\leq & m^{q-1}S_{N}\left( \int_{\Omega }\left| x\right| ^{\frac{\alpha -\nu
\left( q-1\right) }{N+2}2N}dx\right) ^{\frac{N+2}{2N}}\left\| h\right\| .
\end{eqnarray*}
}
%\smallskip

\noindent \emph{Case }$0<\beta <1/2$\emph{.}%\smallskip

\noindent One has $\frac{1}{\beta }>1$ and $\frac{1-\beta }{1-2\beta }%
2^{*}>1 $, with H\"{o}lder conjugate exponents $\left( \frac{1}{\beta }%
\right) ^{\prime }=\frac{1}{1-\beta }$ and $\left( \frac{1-\beta }{1-2\beta }%
2^{*}\right) ^{\prime }=\frac{2N\left( 1-\beta \right) }{N+2\left( 1-2\beta
\right) }$. Then we get 
{\allowdisplaybreaks
\begin{eqnarray*}
&&\frac{1}{\Lambda }\int_{\Omega }K\left( \left| x\right| \right) \left|
u\right| ^{q-1}\left| h\right| dx \\
&\leq &\int_{\Omega }\left| x\right| ^{\alpha }V\left( \left| x\right|
\right) ^{\beta }\left| u\right| ^{q-1}\left| h\right| dx=\int_{\Omega
}\left| x\right| ^{\alpha }\left| u\right| ^{q-1}\left| h\right| ^{1-2\beta
}V\left( \left| x\right| \right) ^{\beta }\left| h\right| ^{2\beta }dx \\
&\leq &\left( \int_{\Omega }\left( \left| x\right| ^{\alpha }\left| u\right|
^{q-1}\left| h\right| ^{1-2\beta }\right) ^{\frac{1}{1-\beta }}dx\right)
^{1-\beta }\left( \int_{\Omega }V\left( \left| x\right| \right) \left|
h\right| ^{2}dx\right) ^{\beta } \\
&\leq &\left( \int_{\Omega }\left( \left| x\right| ^{\alpha }\left| u\right|
^{q-1}\left| h\right| ^{1-2\beta }\right) ^{\frac{1}{1-\beta }}dx\right)
^{1-\beta }\left\| h\right\| ^{2\beta } \\
&\leq &\left( \left( \int_{\Omega }\left( \left| x\right| ^{\frac{\alpha }{%
1-\beta }}\left| u\right| ^{\frac{q-1}{1-\beta }}\right) ^{\left( \frac{%
1-\beta }{1-2\beta }2^{*}\right) ^{\prime }}dx\right) ^{\frac{1}{\left( 
\frac{1-\beta }{1-2\beta }2^{*}\right) ^{\prime }}}\left( \int_{\Omega
}\left| h\right| ^{2^{*}}dx\right) ^{\frac{1-2\beta }{\left( 1-\beta \right)
2^{*}}}\right) ^{1-\beta }\left\| h\right\| ^{2\beta } \\
&\leq &m^{q-1}\left( \left( \int_{\Omega }\left( \left| x\right| ^{\frac{%
\alpha }{1-\beta }-\nu \frac{q-1}{1-\beta }}\right) ^{\left( \frac{1-\beta }{%
1-2\beta }2^{*}\right) ^{\prime }}dx\right) ^{\frac{1}{\left( \frac{1-\beta 
}{1-2\beta }2^{*}\right) ^{\prime }}}S_{N}^{\frac{1-2\beta }{1-\beta }%
}\left\| h\right\| ^{\frac{1-2\beta }{1-\beta }}\right) ^{1-\beta }\left\|
h\right\| ^{2\beta } \\
&=&m^{q-1}\left( \int_{\Omega }\left| x\right| ^{\frac{\alpha -\nu \left(
q-1\right) }{N+2\left( 1-2\beta \right) }2N}dx\right) ^{\frac{N+2\left(
1-2\beta \right) }{2N}}S_{N}^{1-2\beta }\left\| h\right\| .
\end{eqnarray*}
}
%\smallskip

\noindent \emph{Case }$\beta =\frac{1}{2}$\emph{.}%\smallskip

\noindent We have 
{\allowdisplaybreaks
\begin{eqnarray*}
\frac{1}{\Lambda }\int_{\Omega }K\left( \left| x\right| \right) \left|
u\right| ^{q-1}\left| h\right| dx &\leq &\int_{\Omega }\left| x\right|
^{\alpha }\left| u\right| ^{q-1}V\left( \left| x\right| \right) ^{\frac{1}{2}%
}\left| h\right| dx \\
&\leq &\left( \int_{\Omega }\left| x\right| ^{2\alpha }\left| u\right|
^{2\left( q-1\right) }dx\right) ^{\frac{1}{2}}\left( \int_{\Omega }V\left(
\left| x\right| \right) \left| h\right| ^{2}dx\right) ^{\frac{1}{2}} \\
&\leq &m^{q-1}\left( \int_{\Omega }\left| x\right| ^{2\alpha -2\nu \left(
q-1\right) }dx\right) ^{\frac{1}{2}}\left\| h\right\| .
\end{eqnarray*}
}
%\smallskip

\noindent \emph{Case }$1/2<\beta <1$\emph{.}%\smallskip

\noindent One has $\frac{1}{2\beta -1}>1$, with H\"{o}lder conjugate
exponent $\left( \frac{1}{2\beta -1}\right) ^{\prime }=\frac{1}{2\left(
1-\beta \right) }$. Then 
{\allowdisplaybreaks
\begin{eqnarray*}
&&\frac{1}{\Lambda }\int_{\Omega }K\left( \left| x\right| \right) \left|
u\right| ^{q-1}\left| h\right| dx \\
&\leq &\int_{\Omega }\left| x\right|
^{\alpha }V\left( \left| x\right| \right) ^{\beta }\left| u\right|
^{q-1}\left| h\right| dx=\int_{\Omega }\left| x\right| ^{\alpha }V\left(
\left| x\right| \right) ^{\frac{2\beta -1}{2}}\left| u\right| ^{q-1}V\left(
\left| x\right| \right) ^{\frac{1}{2}}\left| h\right| dx \\
&\leq &\left( \int_{\Omega }\left| x\right| ^{2\alpha }V\left( \left|
x\right| \right) ^{2\beta -1}\left| u\right| ^{2\left( q-1\right) }dx\right)
^{\frac{1}{2}}\left( \int_{\Omega }V\left( \left| x\right| \right) \left|
h\right| ^{2}dx\right) ^{\frac{1}{2}} \\
&\leq &\left( \int_{\Omega }\left| x\right| ^{2\alpha }\left| u\right|
^{2\left( q-2\beta \right) }V\left( \left| x\right| \right) ^{2\beta
-1}\left| u\right| ^{2\left( 2\beta -1\right) }dx\right) ^{\frac{1}{2}%
}\left\| h\right\| \\
&\leq &\left( \left( \int_{\Omega }\left| x\right| ^{\frac{\alpha }{1-\beta }%
}\left| u\right| ^{\frac{q-2\beta }{1-\beta }}dx\right) ^{2\left( 1-\beta
\right) }\left( \int_{\Omega }V\left( \left| x\right| \right) \left|
u\right| ^{2}dx\right) ^{2\beta -1}\right) ^{\frac{1}{2}}\left\| h\right\| \\
&\leq &m^{q-2\beta }\left( \left( \int_{\Omega }\left| x\right| ^{\frac{%
\alpha }{1-\beta }-\nu \frac{q-2\beta }{1-\beta }}dx\right) ^{2\left(
1-\beta \right) }\left( \int_{\Omega }V\left( \left| x\right| \right) \left|
u\right| ^{2}dx\right) ^{2\beta -1}\right) ^{\frac{1}{2}}\left\| h\right\| \\
&=&m^{q-2\beta }\left( \int_{\Omega }\left| x\right| ^{\frac{\alpha -\nu
(q-2\beta )}{1-\beta }}dx\right) ^{1-\beta }\left( \int_{\Omega }V\left(
\left| x\right| \right) \left| u\right| ^{2}dx\right) ^{\frac{2\beta -1}{2}%
}\left\| h\right\| \\
&\leq &m^{q-2\beta }\left( \int_{\Omega }\left| x\right| ^{\frac{\alpha -\nu
(q-2\beta )}{1-\beta }}dx\right) ^{1-\beta }\left\| u\right\| ^{2\beta
-1}\left\| h\right\| .
\end{eqnarray*}
}
%\smallskip

\noindent \emph{Case }$\beta =1$\emph{.}%\smallskip

\noindent Assumption $q>\max \left\{ 1,2\beta \right\} $ means $q>2$ and
thus we have 
{\allowdisplaybreaks
\begin{eqnarray*}
\frac{1}{\Lambda }\int_{\Omega }K\left( \left| x\right| \right) \left|
u\right| ^{q-1}\left| h\right| dx 
&\leq &\int_{\Omega }\left| x\right|
^{\alpha }V\left( \left| x\right| \right) \left| u\right| ^{q-1}\left|
h\right| dx\\
&=&\int_{\Omega }\left| x\right| ^{\alpha }V\left( \left| x\right|
\right) ^{\frac{1}{2}}\left| u\right| ^{q-1}V\left( \left| x\right| \right)
^{\frac{1}{2}}\left| h\right| dx \\
&\leq &\left( \int_{\Omega }\left| x\right| ^{2\alpha }V\left( \left|
x\right| \right) \left| u\right| ^{2\left( q-1\right) }dx\right) ^{\frac{1}{2%
}}\left( \int_{\Omega }V\left( \left| x\right| \right) \left| h\right|
^{2}dx\right) ^{\frac{1}{2}} \\
&\leq &\left( \int_{\Omega }\left| x\right| ^{2\alpha }\left| u\right|
^{2\left( q-2\right) }V\left( \left| x\right| \right) \left| u\right|
^{2}dx\right) ^{\frac{1}{2}}\left\| h\right\| \\
&\leq &m^{q-2}\left( \int_{\Omega }\left| x\right| ^{2\alpha -2\nu \left(
q-2\right) }V\left( \left| x\right| \right) \left| u\right| ^{2}dx\right) ^{%
\frac{1}{2}}\left\| h\right\| .
\end{eqnarray*}
}
\end{proof}
%TCIMACRO{\TeXButton{End Proof}{\endproof}}
%BeginExpansion
%\endproof%
%EndExpansion
%\bigskip

As in the previous section, we fix a constant $C_{N}>0$ such that (\ref
{PointwiseEstimate}) holds. Recall the definitions (\ref{N_o})-(\ref{N_i})
of the functions $\mathcal{R}_{0}$ and $\mathcal{R}_{\infty }$.%\bigskip

%\noindent \textbf{Proof of Theorem \ref{THM0}.}\quad 
\begin{proof}[Proof of Theorem \ref{THM0}]
%\proof[Proof of Theorem \ref{THM0}]
Assume the hypotheses of the theorem and let $u\in H_{V,\mathrm{r}}^{1}$ and $h\in H_{V}^{1}$ be
such that $\left\| u\right\| =\left\| h\right\| =1$. Let $0<R\leq R_{1}$. We
will denote by $C$ any positive constant which does not depend on $u$, $h$
and $R$.

By (\ref{PointwiseEstimate}) and the fact that 
\[
\esssup_{x\in B_{R}}\frac{K\left( \left| x\right| \right) }{\left|
x\right| ^{\alpha _{0}}V\left( \left| x\right| \right) ^{\beta _{0}}}\leq 
\esssup_{r\in \left( 0,R_{1}\right) }\frac{K\left( r\right) }{%
r^{\alpha _{0}}V\left( r\right) ^{\beta _{0}}}<+\infty , 
\]
we can apply Lemma \ref{Lem(Omega)} with $\Omega =B_{R}$, $\alpha =\alpha
_{0}$, $\beta =\beta _{0}$, $m=C_{N}\left\| u\right\| =C_{N}$ and $\nu =%
\frac{N-2}{2}$. If $0\leq \beta _{0}\leq 1/2$ we get 
\begin{eqnarray*}
\int_{B_{R}}K\left( \left| x\right| \right) \left| u\right| ^{q_{1}-1}\left|
h\right| dx &\leq &C\left( \int_{B_{R}}\left| x\right| ^{\frac{\alpha _{0}-%
\frac{N-2}{2}\left( q_{1}-1\right) }{N+2\left( 1-2\beta _{0}\right) }%
2N}dx\right) ^{\frac{N+2\left( 1-2\beta _{0}\right) }{2N}} \\
&\leq &C\left( \int_{0}^{R}r^{\frac{2\alpha _{0}-\left( N-2\right) \left(
q_{1}-1\right) }{N+2\left( 1-2\beta _{0}\right) }N+N-1}dr\right) ^{\frac{%
N+2\left( 1-2\beta _{0}\right) }{2N}} \\
&=&C\left( R^{\frac{2\alpha _{0}-4\beta _{0}+2N-\left( N-2\right) q_{1}}{%
N+2\left( 1-2\beta _{0}\right) }N}\right) ^{\frac{N+2\left( 1-2\beta
_{0}\right) }{2N}},
\end{eqnarray*}
since 
\[
2\alpha _{0}-4\beta _{0}+2N-\left( N-2\right) q_{1}=\left( N-2\right) \left(
q^{*}\left( \alpha _{0},\beta _{0}\right) -q_{1}\right) >0. 
\]
On the other hand, if $1/2<\beta _{0}<1$ we have 
\begin{eqnarray*}
\int_{B_{R}}K\left( \left| x\right| \right) \left| u\right| ^{q_{1}-1}\left|
h\right| dx 
&\leq & C\left( \int_{B_{R}}\left| x\right| ^{\frac{\alpha _{0}-%
\frac{N-2}{2}\left( q_{1}-2\beta _{0}\right) }{1-\beta _{0}}}dx\right)^{1-\beta _{0}}\\
&\leq & C\left( \int_{0}^{R}r^{\frac{\alpha _{0}-\frac{N-2}{2}%
\left( q_{1}-2\beta _{0}\right) }{1-\beta _{0}}+N-1}dr\right) ^{1-\beta _{0}}
\\
&=&C\left( R^{\frac{2\alpha _{0}-\left( N-2\right) \left( q_{1}-2\beta
_{0}\right) }{2\left( 1-\beta _{0}\right) }+N}\right) ^{1-\beta _{0}},
\end{eqnarray*}
since 
\[
\frac{2\alpha _{0}-\left( N-2\right) \left( q_{1}-2\beta _{0}\right) }{%
2\left( 1-\beta _{0}\right) }+N=\frac{N-2}{2\left( 1-\beta _{0}\right) }%
\left( q^{*}\left( \alpha _{0},\beta _{0}\right) -q_{1}\right) >0. 
\]
Finally, if $\beta _{0}=1$, we obtain 
\begin{eqnarray*}
\int_{B_{R}}K\left( \left| x\right| \right) \left| u\right| ^{q_{1}-1}\left|
h\right| dx &\leq &C\left( \int_{B_{R}}\left| x\right| ^{2\alpha _{0}-\left(
N-2\right) \left( q_{1}-2\right) }V\left( \left| x\right| \right) \left|
u\right| ^{2}dx\right) ^{\frac{1}{2}} \\
&\leq &C\left( R^{2\alpha _{0}-\left( N-2\right) \left( q_{1}-2\right)
}\int_{B_{R}}V\left( \left| x\right| \right) \left| u\right| ^{2}dx\right) ^{%
\frac{1}{2}}\\
&\leq & CR^{\frac{2\alpha _{0}-\left( N-2\right) \left(q_{1}-2\right) }{2}},
\end{eqnarray*}
since 
\[
2\alpha _{0}-\left( N-2\right) \left( q_{1}-2\right) =\left( N-2\right)
\left( q^{*}\left( \alpha _{0},1\right) -q_{1}\right) >0. 
\]
So, in any case, we deduce $\mathcal{R}_{0}\left( q_{1},R\right) \leq
CR^{\delta }$ for some $\delta =\delta \left( N,\alpha _{0},\beta
_{0},q_{1}\right) >0$ and this concludes the proof.%
%TCIMACRO{\TeXButton{End Proof}{\endproof}}
%BeginExpansion
\end{proof}
%\endproof%
%EndExpansion
%\bigskip

%\noindent \textbf{Proof of Theorem \ref{THM1}.}\quad 
\proof[Proof of Theorem \ref{THM1}]
Assume the hypotheses of the theorem and let $u\in H_{V,\mathrm{r}}^{1}$ and $h\in H_{V}^{1}$ be
such that $\left\| u\right\| =\left\| h\right\| =1$. Let $R\geq R_{2}$. We
will denote by $C$ any positive constant which does not depend on $u$, $h$
and $R$.

By (\ref{PointwiseEstimate}) and the fact that 
\[
\esssup_{x\in B_{R}^{c}}\frac{K\left( \left| x\right| \right) }{%
\left| x\right| ^{\alpha _{\infty }}V\left( \left| x\right| \right) ^{\beta
_{\infty }}}\leq \esssup_{r>R_{2}}\frac{K\left( r\right) }{%
r^{\alpha _{\infty }}V\left( r\right) ^{\beta _{\infty }}}<+\infty , 
\]
we can apply Lemma \ref{Lem(Omega)} with $\Omega =B_{R}^{c}$, $\alpha
=\alpha _{\infty }$, $\beta =\beta _{\infty }$, $m=C_{N}\left\| u\right\|
=C_{N}$ and $\nu =\frac{N-2}{2}$. If $0\leq \beta _{\infty }\leq 1/2$ we get 
\begin{eqnarray*}
\int_{B_{R}^{c}}K\left( \left| x\right| \right) \left| u\right|
^{q_{2}-1}\left| h\right| dx &\leq &C\left( \int_{B_{R}^{c}}\left| x\right|
^{\frac{\alpha _{\infty }-\frac{N-2}{2}\left( q_{2}-1\right) }{N+2\left(
1-2\beta _{\infty }\right) }2N}dx\right) ^{\frac{N+2\left( 1-2\beta _{\infty
}\right) }{2N}} \\
&\leq &C\left( \int_{R}^{+\infty }r^{\frac{2\alpha _{\infty }-\left(
N-2\right) \left( q_{2}-1\right) }{N+2\left( 1-2\beta _{\infty }\right) }%
N+N-1}dr\right) ^{\frac{N+2\left( 1-2\beta _{\infty }\right) }{2N}} \\
&=&C\left( R^{\frac{2\alpha _{\infty }-4\beta _{\infty }+2N-\left(
N-2\right) q_{2}}{N+2\left( 1-2\beta _{\infty }\right) }N}\right) ^{\frac{%
N+2\left( 1-2\beta _{\infty }\right) }{2N}},
\end{eqnarray*}
since 
\[
2\alpha _{\infty }-4\beta _{\infty }+2N-\left( N-2\right) q_{2}=\left(
N-2\right) \left( q^{*}\left( \alpha _{\infty },\beta _{\infty }\right)
-q_{2}\right) <0. 
\]
On the other hand, if $1/2<\beta _{\infty }<1$ we have 
\begin{eqnarray*}
\int_{B_{R}^{c}}K\left( \left| x\right| \right) \left| u\right|
^{q_{2}-1}\left| h\right| dx &\leq &C\left( \int_{B_{R}^{c}}\left| x\right|
^{\frac{\alpha _{\infty }-\frac{N-2}{2}\left( q_{2}-2\beta _{\infty }\right) 
}{1-\beta _{\infty }}}dx\right) ^{1-\beta _{\infty }} \\
&\leq &C\left( \int_{R}^{+\infty }r^{\frac{\alpha _{\infty }-\frac{N-2}{2}%
\left( q_{2}-2\beta _{\infty }\right) }{1-\beta _{\infty }}+N-1}dr\right)
^{1-\beta _{\infty }} \\
&=&C\left( R^{\frac{2\alpha _{\infty }-\left( N-2\right) \left( q_{2}-2\beta
_{\infty }\right) }{2\left( 1-\beta _{\infty }\right) }+N}\right) ^{1-\beta
_{\infty }},
\end{eqnarray*}
since 
\[
\frac{2\alpha _{\infty }-\left( N-2\right) \left( q_{2}-2\beta _{\infty
}\right) }{2\left( 1-\beta _{\infty }\right) }+N=\frac{N-2}{2\left( 1-\beta
_{_{\infty }}\right) }\left( q^{*}\left( \alpha _{_{\infty }},\beta
_{_{\infty }}\right) -q_{2}\right) <0. 
\]
Finally, if $\beta _{\infty }=1$, we obtain 
\begin{eqnarray*}
\int_{B_{R}^{c}}K\left( \left| x\right| \right) \left| u\right|
^{q_{2}-1}\left| h\right| dx &\leq &C\left( \int_{B_{R}^{c}}\left| x\right|
^{2\alpha _{\infty }-\left( N-2\right) \left( q_{2}-2\right) }V\left( \left|
x\right| \right) \left| u\right| ^{2}dx\right) ^{\frac{1}{2}} \\
&\leq &C\left( R^{2\alpha _{\infty }-\left( N-2\right) \left( q_{2}-2\right)
}\int_{B_{R}^{c}}V\left( \left| x\right| \right) \left| u\right|
^{2}dx\right) ^{\frac{1}{2}}\\
&\leq & CR^{\frac{2\alpha _{\infty }-\left(
N-2\right) \left( q_{2}-2\right) }{2}},
\end{eqnarray*}
since 
\[
2\alpha _{\infty }-\left( N-2\right) \left( q_{2}-2\right) =\left(
N-2\right) \left( q^{*}\left( \alpha _{\infty },1\right) -q_{2}\right) <0. 
\]
This completes the proof, since in any case we get $\mathcal{R}_{\infty }\left(
q_{2},R\right) \leq CR^{\delta }$ for some $\delta =\delta \left( N,\alpha
_{\infty },\beta _{\infty },q_{2}\right) <0$.%
%TCIMACRO{\TeXButton{End Proof}{\endproof}}
%BeginExpansion
\endproof%
%EndExpansion
%\bigskip

In proving Theorem \ref{THM2}, we will need the following lemma.

\begin{lem}
Assume that there exists $R_{2}>0$ such that 
\[
\lambda _{\infty }:=\essinf_{r>R_{2}}r^{\gamma _{\infty }}V\left(
r\right) >0\quad \text{for some }\gamma _{\infty }\leq 2.
\]
Then there exists a constant $c_{\infty }>0$, only dependent on $N$ and $%
\gamma _{\infty }$, such that 
\begin{equation}
\forall u\in H_{V,\mathrm{r}}^{1}\left( \mathbb{R}^{N}\right) ,\quad \left|
u\left( x\right) \right| \leq c_{\infty }\lambda _{\infty }^{-\frac{1}{4}%
}\left\| u\right\| \left| x\right| ^{-\frac{2(N-1)-\gamma _{\infty }}{4}%
}\quad \text{almost everywhere in }B_{R_{2}}^{c}.  \label{PointwiseInfty}
\end{equation}
\end{lem}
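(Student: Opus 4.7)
The plan is to adapt the standard radial Ni-type argument used for (\ref{PointwiseEstimate}), but with the weight $V$ inserted into the Cauchy--Schwarz step so that the two factors produce the improved decay exponent $(2(N-1)-\gamma_\infty)/4$. By definition of $H_{V,\mathrm{r}}^{1}$ as the closure of $C_{\mathrm{c,rad}}^{\infty}(\mathbb{R}^{N})$ in $H_{V}^{1}$, it suffices to establish the inequality for $u\in C_{\mathrm{c,rad}}^{\infty}(\mathbb{R}^{N})$ and then pass to the limit via an almost everywhere convergent subsequence, combined with the already known pointwise bound (\ref{PointwiseEstimate}) to control the convergence uniformly on $B_{R_{2}}^{c}$.

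For smooth compactly supported radial $u$, identify $u(x)=u(r)$ with $r=|x|$. Since $u$ vanishes near infinity, the fundamental theorem of calculus gives
\[
u(r)^{2}=-\int_{r}^{+\infty}\frac{d}{ds}\bigl(u(s)^{2}\bigr)\,ds=-2\int_{r}^{+\infty}u(s)\,u'(s)\,ds
\quad\text{for every }r>0.
\]
Fix any $r\geq R_{2}$ and split, on the integration range $s>R_{2}$, the integrand as
\[
|u(s)\,u'(s)|=\bigl(|u(s)|\,V(s)^{1/2}s^{(N-1)/2}\bigr)\cdot\bigl(|u'(s)|\,V(s)^{-1/2}s^{-(N-1)/2}\bigr).
\]
Cauchy--Schwarz then yields
\[
u(r)^{2}\leq 2\left(\int_{r}^{+\infty}u(s)^{2}V(s)s^{N-1}\,ds\right)^{\!1/2}\left(\int_{r}^{+\infty}\frac{u'(s)^{2}}{V(s)s^{N-1}}\,ds\right)^{\!1/2}.
\]

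For the first factor, passing to Cartesian coordinates gives $\int_{r}^{+\infty}u^{2}Vs^{N-1}ds=\omega_{N-1}^{-1}\int_{B_{r}^{c}}V(|x|)u^{2}dx\leq\omega_{N-1}^{-1}\|u\|^{2}$, where $\omega_{N-1}$ denotes the surface measure of the unit sphere. For the second factor, rewrite the integrand as $u'(s)^{2}s^{N-1}\cdot\bigl(V(s)s^{2(N-1)}\bigr)^{-1}$ and use the assumption: since $N\geq 3$ and $\gamma_{\infty}\leq 2\leq 2(N-1)$, the lower bound $V(s)s^{2(N-1)}\geq\lambda_{\infty}s^{2(N-1)-\gamma_{\infty}}$ is nondecreasing on $(R_{2},+\infty)$, so its reciprocal attains its supremum on $(r,+\infty)$ at $s=r$. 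Hence
\[
\int_{r}^{+\infty}\frac{u'(s)^{2}}{V(s)s^{N-1}}\,ds\leq\frac{1}{\lambda_{\infty}r^{2(N-1)-\gamma_{\infty}}}\int_{r}^{+\infty}u'(s)^{2}s^{N-1}\,ds\leq\frac{\|u\|^{2}}{\omega_{N-1}\lambda_{\infty}r^{2(N-1)-\gamma_{\infty}}}.
\]
Combining the two estimates gives $u(r)^{2}\leq(2/\omega_{N-1})\lambda_{\infty}^{-1/2}r^{-(2(N-1)-\gamma_{\infty})/2}\|u\|^{2}$, which is precisely (\ref{PointwiseInfty}) with $c_{\infty}:=\sqrt{2/\omega_{N-1}}$.

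There is no real obstacle: the argument is a direct weighted reworking of Ni's inequality. The one point requiring attention is the monotonicity step, where one must use \emph{exactly} the hypothesis $\gamma_{\infty}\leq 2$ together with $N\geq 3$ to ensure that $V(s)s^{2(N-1)}$ is nondecreasing; this is why the estimate at $s=r$ controls the tail integral. The density step at the end is standard, since the right-hand side of (\ref{PointwiseInfty}) is a continuous function of $\|u\|$ and the convergence in $H_{V}^{1}$ implies, up to a subsequence, almost everywhere pointwise convergence on $B_{R_{2}}^{c}$.
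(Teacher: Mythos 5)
Your proof is correct. The paper itself does not reprove this lemma --- the stated proof is only the observation that \cite[Lemma 4]{Su-Wang-Will p} goes through when the hypothesis on $V$ is imposed only for $r>R_{2}$ --- so your write-up is in effect a correct, self-contained reconstruction of that cited argument: the identity $u(r)^{2}=-2\int_{r}^{+\infty}u\,u'\,ds$ for smooth compactly supported radial $u$, Cauchy--Schwarz with the weight $V(s)^{1/2}s^{(N-1)/2}$ inserted, the bound $V(s)s^{2(N-1)}\geq\lambda_{\infty}s^{2(N-1)-\gamma_{\infty}}$ together with $2(N-1)-\gamma_{\infty}>0$ (which holds since $N\geq3$ and $\gamma_{\infty}\leq2$) to pull the factor $r^{-(2(N-1)-\gamma_{\infty})}$ out of the tail integral, and both remaining radial integrals controlled by $\|u\|^{2}/\omega_{N-1}$; the resulting constant $c_{\infty}=\sqrt{2/\omega_{N-1}}$ depends only on $N$, as required. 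Two minor remarks. First, the pointwise factorization preceding Cauchy--Schwarz divides by $V(s)^{1/2}$, which is legitimate because the hypothesis forces $V(s)\geq\lambda_{\infty}s^{-\gamma_{\infty}}>0$ for almost every $s>R_{2}$; this deserves a sentence. Second, in the density step you do not need (\ref{PointwiseEstimate}) or any uniform control: convergence in $H_{V}^{1}$ implies convergence in $L^{2^{*}}(\mathbb{R}^{N})$, hence almost everywhere convergence of a subsequence, which combined with $\|u_{n}\|\rightarrow\|u\|$ passes the pointwise inequality to the limit.
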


%TCIMACRO{\TeXButton{Proof}{\proof}}
%BeginExpansion
\proof%
%EndExpansion
The lemma is proved in \cite[Lemma 4]{Su-Wang-Will p} with a global
assumption on $V$, but, checking the proof, we see that the result actually
holds in the form given here.%
%TCIMACRO{\TeXButton{End Proof}{\endproof}}
%BeginExpansion
\endproof%
%EndExpansion
%\bigskip

Observe that $2(N-1)-\gamma _{\infty }>0$ in (\ref{PointwiseInfty}), since $%
\gamma _{\infty }\leq 2$ and $N\geq 3$.%\bigskip

%\noindent \textbf{Proof of Theorem \ref{THM2}.}\quad 
\proof[Proof of Theorem \ref{THM2}]
Assume the hypotheses of the theorem and denote 
\[
\Lambda _{\infty }:=\esssup_{r>R_{2}}\frac{K\left( r\right) }{%
r^{\alpha _{\infty }}V\left( r\right) ^{\beta _{\infty }}}\quad \text{and}%
\quad \lambda _{\infty }:=\essinf_{r>R_{2}}r^{\gamma _{\infty
}}V\left( r\right) . 
\]
Let $u\in H_{V,\mathrm{r}}^{1}$ and $h\in H_{V}^{1}$ be such that $\left\|
u\right\| =\left\| h\right\| =1$. Let $R\geq R_{2}$ and observe that $%
\forall \xi \geq 0$ one has 
\begin{equation}
\esssup_{r>R}\frac{K\left( r\right) }{r^{\alpha _{\infty }+\xi
\gamma _{\infty }}V\left( r\right) ^{\beta _{\infty }+\xi }}\leq 
\esssup_{r>R_{2}}\frac{K\left( r\right) }{r^{\alpha _{\infty }}V\left(
r\right) ^{\beta _{\infty }}\left( r^{\gamma _{\infty }}V\left( r\right)
\right) ^{\xi }}\leq \frac{\Lambda _{\infty }}{\lambda _{\infty }^{\xi }}%
<+\infty .  \label{stimaETA}
\end{equation}
We will denote by $C$ any positive constant which does not depend on $u$, $h$
or $R$ (such as $\Lambda _{\infty }/\lambda _{\infty }^{\xi }$ if $\xi $
does not depend on $u$, $h$ or $R$).

Denoting $\alpha _{1}=\alpha _{1}\left( \beta _{\infty },\gamma _{\infty
}\right) $, $\alpha _{2}=\alpha _{2}\left( \beta _{\infty }\right) $ and $%
\alpha _{3}=\alpha _{3}\left( \beta _{\infty },\gamma _{\infty }\right) $,
as defined in (\ref{alpha_i :=}), we will distinguish several cases,
according to the description (\ref{descrizioneThm2}). In each of such cases,
we will choose a suitable $\xi \geq 0$ and, thanks to (\ref{stimaETA}) and (%
\ref{PointwiseInfty}), we will apply Lemma \ref{Lem(Omega)} with $\Omega
=B_{R}^{c}$, $\alpha =\alpha _{\infty }+\xi \gamma _{\infty }$, $\beta
=\beta _{\infty }+\xi $ (whence $\Lambda $ will be given by the left hand
side of (\ref{stimaETA})), $m=c_{\infty }\lambda _{\infty }^{-\frac{1}{4}%
}\left\| u\right\| =c_{\infty }\lambda _{\infty }^{-\frac{1}{4}}$ and $\nu =%
\frac{2(N-1)-\gamma _{\infty }}{4}$. We will obtain that 
\[
\int_{B_{R}^{c}}K\left( \left| x\right| \right) \left| u\right|
^{q_{2}-1}\left| h\right| dx\leq CR^{\delta }
\]
for some $\delta <0$, not dependent on $R$, so that the result
follows.\medskip%\medskip 

\noindent \emph{Case }$\alpha _{\infty }\geq \alpha _{1}$.\smallskip

\noindent We take $\xi =1-\beta _{\infty }$ and apply Lemma \ref{Lem(Omega)}
with $\beta =\beta _{\infty }+\xi =1$ and $\alpha =\alpha _{\infty }+\xi
\gamma _{\infty }=\alpha _{\infty }+\left( 1-\beta _{\infty }\right) \gamma
_{\infty }$. We get 
\begin{eqnarray*}
\int_{B_{R}^{c}}K\left( \left| x\right| \right) \left| u\right|
^{q_{2}-1}\left| h\right| dx &\leq &C\left( \int_{B_{R}^{c}}\left| x\right|
^{2\alpha -2\nu \left( q_{2}-2\right) }V\left( \left| x\right| \right)
\left| u\right| ^{2}dx\right) ^{\frac{1}{2}} \\
&\leq &C\left( R^{2\alpha -2\nu \left( q_{2}-2\right)
}\int_{B_{R}^{c}}V\left( \left| x\right| \right) \left| u\right|
^{2}dx\right) ^{\frac{1}{2}}\leq CR^{\alpha -\nu \left( q_{2}-2\right) },
\end{eqnarray*}
since 
\begin{eqnarray*}
\alpha -\nu \left( q_{2}-2\right) &=& \alpha _{\infty }+\left( 1-\beta _{\infty
}\right) \gamma _{\infty }-\frac{2(N-1)-\gamma _{\infty }}{4}\left(
q_{2}-2\right) \\
&=&\frac{2(N-1)-\gamma _{\infty }}{4}\left( q_{**}-q_{2}\right)
<0. 
\end{eqnarray*}
%\smallskip
%\pagebreak

\noindent \emph{Case }$\max \left\{ \alpha _{2},\alpha _{3}\right\} <\alpha
_{\infty }<\alpha _{1}$.\smallskip

\noindent Take $\xi =\frac{\alpha _{\infty }+\left( 1-\beta _{\infty
}\right) N}{N-\gamma _{\infty }}>0$ and apply Lemma \ref{Lem(Omega)} with $%
\beta =\beta _{\infty }+\xi $ and $\alpha =\alpha _{\infty }+\xi \gamma
_{\infty }$. For doing this, observe that $\alpha _{3}<\alpha _{\infty
}<\alpha _{1}$ implies 
\[
\beta =\beta _{\infty }+\xi =\frac{\alpha _{\infty }-\gamma _{\infty }\beta
_{\infty }+N}{N-\gamma _{\infty }}\in \left( \frac{1}{2},1\right) . 
\]
We get 
\[
\int_{B_{R}^{c}}K\left( \left| x\right| \right) \left| u\right|
^{q_{2}-1}\left| h\right| dx\leq C\left( \int_{B_{R}^{c}}\left| x\right| ^{%
\frac{\alpha -\nu \left( q_{2}-2\beta \right) }{1-\beta }}dx\right)
^{1-\beta }\leq C\left( R^{\frac{\alpha -\nu \left( q_{2}-2\beta \right) }{%
1-\beta }+N}\right) ^{1-\beta }, 
\]
since 
\[
\frac{\alpha -\nu \left( q_{2}-2\beta \right) }{1-\beta }+N=\frac{\nu }{%
1-\beta }\left( 2\frac{\alpha _{\infty }-\beta _{\infty }\gamma _{\infty }+N%
}{N-\gamma _{\infty }}-q_{2}\right) =\frac{\nu }{1-\beta }\left(
q_{*}-q_{2}\right) <0. 
\]
%\smallskip

\noindent \emph{Case }$\beta _{\infty }=1$\emph{\ and }$\alpha _{\infty
}\leq 0=\alpha _{2}\,\left( =\max \left\{ \alpha _{2},\alpha _{3}\right\}
\right) $.\smallskip

\noindent Take $\xi =0$ and apply Lemma \ref{Lem(Omega)} with $\beta =\beta
_{\infty }+\xi =1$ and $\alpha =\alpha _{\infty }+\xi \gamma _{\infty
}=\alpha _{\infty }$. We get 
\[
\int_{B_{R}^{c}}K\left( \left| x\right| \right) \left| u\right|
^{q_{2}-1}\left| h\right| dx\leq C\left( \int_{B_{R}^{c}}\left| x\right|
^{2\alpha _{\infty }-2\nu \left( q_{2}-2\right) }V\left( \left| x\right|
\right) \left| u\right| ^{2}dx\right) ^{\frac{1}{2}}\leq CR^{\alpha _{\infty
}-\nu \left( q_{2}-2\right) }, 
\]
since $\alpha _{\infty }-\nu \left( q_{2}-2\right) \leq -\nu \left(
q_{2}-2\right) <0.$\medskip

\noindent \emph{Case }$\frac{1}{2}<\beta _{\infty }<1$\emph{\ and }$\alpha
_{\infty }\leq \alpha _{2}\,\left( =\max \left\{ \alpha _{2},\alpha
_{3}\right\} \right) $.\smallskip

\noindent Take $\xi =0$ again and apply Lemma \ref{Lem(Omega)} with $\beta
=\beta _{\infty }\in \left( \frac{1}{2},1\right) $ and $\alpha =\alpha
_{\infty }$. We get 
\begin{eqnarray*}
\int_{B_{R}^{c}}K\left( \left| x\right| \right) \left| u\right|^{q_{2}-1}\left| h\right| dx 
&\leq & C\left( \int_{B_{R}^{c}}\left| x\right| ^{\frac{\alpha _{\infty }-\nu \left( q_{2}-2\beta _{\infty }\right) }{1-\beta
_{\infty }}}dx\right) ^{1-\beta _{\infty }}\\
&\leq & C\left( R^{\frac{\alpha_{\infty }-\nu \left( q_{2}-2\beta _{\infty }\right) }{1-\beta _{\infty }}%
+N}\right) ^{1-\beta _{\infty }}, 
\end{eqnarray*}
since 
\[
\frac{\alpha _{\infty }-\nu \left( q_{2}-2\beta _{\infty }\right) }{1-\beta
_{\infty }}+N=\frac{\alpha _{\infty }-\alpha _{2}-\nu \left( q_{2}-2\beta
_{\infty }\right) }{1-\beta _{\infty }}<0 
\]
%\smallskip

\noindent \emph{Case }$\beta _{\infty }\leq \frac{1}{2}$\emph{\ and }$\alpha
_{\infty }\leq \alpha _{3}\,\left( =\max \left\{ \alpha _{2},\alpha
_{3}\right\} \right) $.\smallskip

\noindent Take $\xi =\frac{1-2\beta _{\infty }}{2}\geq 0$, we can apply
Lemma \ref{Lem(Omega)} with $\beta =\beta _{\infty }+\xi =\frac{1}{2}$ and $%
\alpha =\alpha _{\infty }+\xi \gamma _{\infty }$. We get 
\[
\int_{B_{R}^{c}}K\left( \left| x\right| \right) \left| u\right|
^{q_{2}-1}\left| h\right| dx\leq C\left( \int_{B_{R}^{c}}\left| x\right|
^{2\alpha -2\nu \left( q_{2}-1\right) }dx\right) ^{\frac{1}{2}}\leq
CR^{\alpha -\nu \left( q_{2}-1\right) +\frac{N}{2}}, 
\]
since 
\[
\alpha -\nu \left( q_{2}-1\right) +\frac{N}{2}=\alpha _{\infty }+\frac{%
1-2\beta _{\infty }}{2}\gamma _{\infty }+\frac{N}{2}-\nu \left(
q_{2}-1\right) =\alpha _{\infty }-\alpha _{3}-\nu \left( q_{2}-1\right) <0. 
\]
%TCIMACRO{\TeXButton{End Proof}{\endproof}}
%BeginExpansion
\endproof%
%EndExpansion
%\bigskip

The proof of Theorem \ref{THM3} will be achieved by several lemmas.

\begin{lem}
\label{LEM(pointwise0)}Assume that there exists $R>0$ such that 
\[
\lambda \left( R\right) :=\essinf_{r\in \left( 0,R\right)
}r^{\gamma _{0}}V\left( r\right) >0\quad \text{for some }\gamma _{0}\geq 2.
\]
Then there exists a constant $c_{0}>0$, only dependent on $N$ and $\gamma
_{0}$, such that $\forall u\in H_{V,\mathrm{r}}^{1}\cap D_{0}^{1,2}\left(
B_{R}\right) $ one has 
\begin{equation}
\left| u\left( x\right) \right| \leq c_{0}\left( \frac{1}{\sqrt{\lambda
\left( R\right) }}+\frac{R^{\frac{\gamma _{0}-2}{2}}}{\lambda \left(
R\right) }\right) ^{\frac{1}{2}}\left\| u\right\| \left| x\right| ^{-\frac{%
2N-2-\gamma _{0}}{4}}\quad \text{almost everywhere in }B_{R}.
\label{Pointwise0}
\end{equation}
\end{lem}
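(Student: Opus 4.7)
\textbf{Proof plan for Lemma \ref{LEM(pointwise0)}.} The strategy is to refine the classical radial pointwise estimate of Ni by integrating by parts against a power weight $s^{\theta}$ tuned to the singular bound $V(s)\geq \lambda(R)\,s^{-\gamma_0}$ on $(0,R)$. By a standard density/approximation argument (smooth nonnegative radial functions compactly supported in $B_R$, followed by passage to an almost-everywhere convergent subsequence), I may assume $u\in C^{\infty}_{\mathrm{c,rad}}(B_R)$ with $u\geq 0$, so that $u=u(r)$ is $C^1$ on $(0,R)$ and satisfies $u(R)=0$.

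Set $\theta:=(2N-2-\gamma_0)/2$, which may have either sign. Integration by parts gives
\[
r^{\theta}u(r)^2 \;=\; -\int_r^R \frac{d}{ds}\bigl(s^{\theta}u(s)^2\bigr)\,ds \;=\; -\theta\int_r^R s^{\theta-1}u(s)^2\,ds \;-\;2\int_r^R s^{\theta}u(s)u'(s)\,ds,
\]
whence
\[
r^{\theta}u(r)^2 \;\leq\; \max\{-\theta,0\}\int_r^R s^{\theta-1}u^2\,ds \;+\; 2\int_r^R s^{\theta}|u|\,|u'|\,ds.
\]
The key algebraic identity is $\theta-1 = (\gamma_0-2)/2 + (N-1-\gamma_0)$; combined with $\gamma_0\geq 2$ it yields $s^{\theta-1}\leq R^{(\gamma_0-2)/2}s^{N-1-\gamma_0}$ on $(0,R)$. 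Using $V(s)\geq \lambda(R)s^{-\gamma_0}$ one estimates the first integral by
\[
\int_r^R s^{\theta-1}u^2\,ds \;\leq\; \frac{R^{(\gamma_0-2)/2}}{\lambda(R)}\int_0^R V(s)\,u(s)^2\,s^{N-1}\,ds \;\leq\; \frac{R^{(\gamma_0-2)/2}\,\|u\|^2}{\omega_{N-1}\,\lambda(R)},
\]
where $\omega_{N-1}$ is the surface measure of the unit sphere. For the second integral, Cauchy--Schwarz with the natural split $s^{\theta}=s^{(N-1-\gamma_0)/2}\cdot s^{(N-1)/2}$ gives
\[
2\int_r^R s^{\theta}|u||u'|\,ds \;\leq\; 2\left(\int_0^R s^{N-1-\gamma_0}u^2\,ds\right)^{1/2}\!\left(\int_0^R s^{N-1}u'^2\,ds\right)^{1/2} \;\leq\; \frac{2\,\|u\|^2}{\omega_{N-1}\,\sqrt{\lambda(R)}}.
\]
Dividing the combined bound by $r^{\theta}$ and taking square roots then produces the claimed inequality, with $c_0$ depending only on $N$ and $\gamma_0$ (through $|\theta|$ and $\omega_{N-1}$).

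The delicate point is uniformity in the sign of $\theta$: when $\gamma_0>2N-2$ one has $\theta<0$, so the boundary contribution $-\theta\int_r^R s^{\theta-1}u^2\,ds$ cannot simply be discarded as it can for $\theta\geq 0$, and it genuinely produces the additional term $R^{(\gamma_0-2)/2}/\lambda(R)$ in the stated coefficient. The factorization $s^{\theta-1}=s^{(\gamma_0-2)/2}\,s^{N-1-\gamma_0}$ is tailored precisely so that the increasing factor $s^{(\gamma_0-2)/2}$ (monotone because $\gamma_0\geq 2$) can be paid by the upper bound $R^{(\gamma_0-2)/2}$, while the remaining weight $s^{N-1-\gamma_0}$ is exactly the one absorbed by $V$; this single identity is what makes the two contributions fit into the same dimensional constant $c_0=c_0(N,\gamma_0)$.
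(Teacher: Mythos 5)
Your proof is correct. The paper itself does not prove this lemma directly: it cites \cite{Su-Wang-Will p}, Lemma 5, observing that the argument there localizes to $(0,R)$; your computation is precisely that localized argument (integration by parts of $s^{\theta}u^{2}$ with $\theta=(2N-2-\gamma_{0})/2$, absorption of $s^{-\gamma_{0}}$ into $V/\lambda(R)$, and Cauchy--Schwarz), and it correctly produces both terms of the constant, including the extra $R^{(\gamma_{0}-2)/2}/\lambda(R)$ contribution from the case $\theta<0$. The only step you gloss over is the density/approximation argument; it can be bypassed entirely by working with the locally absolutely continuous representative of the radial profile, which vanishes at $r=R$ because $u\in D_{0}^{1,2}(B_{R})$, so that the identity $r^{\theta}u(r)^{2}=-\int_{r}^{R}\frac{d}{ds}\bigl(s^{\theta}u(s)^{2}\bigr)\,ds$ holds directly.
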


%TCIMACRO{\TeXButton{Proof}{\proof}}
%BeginExpansion
\proof%
%EndExpansion
The lemma is proved in \cite[Lemma 5]{Su-Wang-Will p} with a global
assumption on $V$, but, checking the proof, we see that the result actually
holds in the form given here.%
%TCIMACRO{\TeXButton{End Proof}{\endproof}}
%BeginExpansion
\endproof%
%EndExpansion
%\bigskip

\begin{lem}
\label{Lem(Omega0)}Assume that there exists $R>0$ such that 
\begin{equation}
\Lambda _{\alpha ,\beta }\left( R\right) :=\esssup_{r\in \left(
0,R\right) }\frac{K\left( r\right) }{r^{\alpha }V\left( r\right) ^{\beta }}%
<+\infty \quad \text{for some }\frac{1}{2}\leq \beta \leq 1\text{~and }%
\alpha \in \mathbb{R}  \label{Lem(Omega0): hp1}
\end{equation}
and 
\[
\lambda \left( R\right) :=\essinf_{r\in \left( 0,R\right)
}r^{\gamma _{0}}V\left( r\right) >0\quad \text{for some }\gamma _{0}>2.
\]
Assume also that there exists $q>2\beta $ such that 
\[
\left( 2N-2-\gamma _{0}\right) q<4\alpha +4N-2\left( \gamma _{0}+2\right)
\beta .
\]
Then $\forall u\in H_{V,\mathrm{r}}^{1}\cap D_{0}^{1,2}\left( B_{R}\right) $
and $\forall h\in H_{V}^{1}$ one has 
\[
\int_{B_{R}}K\left( \left| x\right| \right) \left| u\right| ^{q-1}\left|
h\right| dx\leq c_{0}^{q-2\beta }a\left( R\right) R^{\frac{4\alpha
+4N-2\left( \gamma _{0}+2\right) \beta -\left( 2N-2-\gamma _{0}\right) q}{4}%
}\left\| u\right\| ^{q-1}\left\| h\right\| ,
\]
where $c_{0}$ is the constant of Lemma \ref{LEM(pointwise0)} and $a\left(
R\right) :=\Lambda _{\alpha ,\beta }\left( R\right) \left( \frac{1}{\sqrt{%
\lambda \left( R\right) }}+\frac{R^{\frac{\gamma _{0}-2}{2}}}{\lambda \left(
R\right) }\right) ^{\frac{q-2\beta }{2}}$.
\end{lem}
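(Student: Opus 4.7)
The plan is to combine the pointwise decay estimate from Lemma~\ref{LEM(pointwise0)} with the integral inequality of Lemma~\ref{Lem(Omega)}, applied on $\Omega = B_R$. By Lemma~\ref{LEM(pointwise0)}, every $u \in H^1_{V,\mathrm{r}} \cap D^{1,2}_0(B_R)$ with $\|u\|=1$ satisfies $|u(x)| \leq m\,|x|^{-\nu}$ a.e.\ on $B_R$ with
\[
m = c_0\left(\tfrac{1}{\sqrt{\lambda(R)}} + \tfrac{R^{(\gamma_0-2)/2}}{\lambda(R)}\right)^{1/2}, \qquad \nu = \frac{2N-2-\gamma_0}{4}.
\]
Since $\gamma_0 > 2$, the hypothesis (\ref{Lem(Omega0): hp1}) gives the required essential supremum for Lemma~\ref{Lem(Omega)} with $\alpha$ and $\beta$ as stated, and $q > 2\beta \geq \max\{1,2\beta\}$ is assumed. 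Applying Lemma~\ref{Lem(Omega)} separately in the three sub-cases $\beta=1/2$, $1/2<\beta<1$, and $\beta=1$, the desired bound will follow by an explicit evaluation of the resulting radial integrals.

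The integrability of those integrals and the matching of powers of $R$ is where one must check that the algebraic hypothesis $(2N-2-\gamma_0)q < 4\alpha + 4N - 2(\gamma_0+2)\beta$ is precisely what is needed. Rewriting this as $\nu q < \alpha + N - \tfrac{(\gamma_0+2)\beta}{2}$, one verifies:
\begin{itemize}
\item For $\beta=1/2$: the radial integral $\int_0^R r^{2(\alpha-\nu(q-1))+N-1}\,dr$ is finite iff $\alpha-\nu(q-1)+N/2>0$, which is equivalent to the hypothesis at $\beta=1/2$, and evaluating gives $R^{\alpha-\nu(q-1)+N/2}$;
\item For $1/2<\beta<1$: the integral $\int_0^R r^{(\alpha-\nu(q-2\beta))/(1-\beta)+N-1}\,dr$ requires $\alpha-\nu(q-2\beta)+N(1-\beta)>0$, which (using $N-2\nu=(\gamma_0+2)/2$) is again the hypothesis, and gives $R^{\alpha-\nu(q-2\beta)+N(1-\beta)}$ before raising to the $(1-\beta)$-th power;
\item For $\beta=1$: the bound becomes $m^{q-2}\Lambda_{\alpha,1}(R)\bigl(\int_{B_R}|x|^{2\alpha-2\nu(q-2)}V(|x|)|u|^2\,dx\bigr)^{1/2}\|h\|$, and a direct check shows $2\alpha-2\nu(q-2)>0$ under the hypothesis, so this exponent is nonnegative and $|x|^{2\alpha-2\nu(q-2)} \leq R^{2\alpha-2\nu(q-2)}$, after which $\int V|u|^2\,dx \leq \|u\|^2$ finishes the bound.
\end{itemize}
In each case, a short arithmetic verification shows that the resulting power of $R$ coincides with $\alpha + N - \tfrac{(\gamma_0+2)\beta}{2} - \nu q$, which is exactly $\tfrac{4\alpha+4N-2(\gamma_0+2)\beta-(2N-2-\gamma_0)q}{4}$; that the $m$-factor produces $c_0^{q-2\beta}$ together with the $\bigl(\tfrac{1}{\sqrt{\lambda(R)}}+\tfrac{R^{(\gamma_0-2)/2}}{\lambda(R)}\bigr)^{(q-2\beta)/2}$ piece of $a(R)$; and that the $\|u\|$-powers combine to $\|u\|^{q-1}$ (using $m^{q-2\beta}\|u\|^{2\beta-1}=\|u\|^{q-1}$ when $1/2<\beta<1$, and similar bookkeeping in the other two cases). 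For $u$ of arbitrary norm the inequality follows by homogeneity in $u$ and $h$.

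The main obstacle is purely bookkeeping: one must verify that the single algebraic assumption $(2N-2-\gamma_0)q<4\alpha+4N-2(\gamma_0+2)\beta$ is simultaneously (i) the integrability condition for the two Lebesgue integrals arising when $\beta<1$, (ii) the nonnegativity of the exponent $2\alpha-2\nu(q-2)$ in the Hardy-type case $\beta=1$, and (iii) produces the uniform exponent of $R$ written in the statement. The uniformity across the three cases is the only subtle point; once the identity $N-2\nu=(\gamma_0+2)/2$ is noted, all three computations collapse to the same final expression.
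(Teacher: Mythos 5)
Your proposal is correct and follows essentially the same route as the paper: apply Lemma \ref{LEM(pointwise0)} to get the pointwise bound with $\nu=\frac{2N-2-\gamma_0}{4}$ and $m=c_0\bigl(\tfrac{1}{\sqrt{\lambda(R)}}+\tfrac{R^{(\gamma_0-2)/2}}{\lambda(R)}\bigr)^{1/2}\|u\|$, feed it into Lemma \ref{Lem(Omega)} on $\Omega=B_R$, and check that the single algebraic hypothesis is exactly the integrability/positivity condition producing the stated power of $R$ in each sub-case. The only cosmetic difference is that the paper handles $\tfrac12\leq\beta<1$ in one stroke via the middle-case formula of Lemma \ref{Lem(Omega)} (which coincides with the $\beta=\tfrac12$ formula there), whereas you split off $\beta=\tfrac12$ separately; the computations are identical.
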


%TCIMACRO{\TeXButton{Proof}{\proof}}
%BeginExpansion
\proof%
%EndExpansion
Let $u\in H_{V,\mathrm{r}}^{1}\cap D_{0}^{1,2}\left( B_{R}\right) $ and $%
h\in H_{V}^{1}$. By assumption (\ref{Lem(Omega0): hp1}) and Lemma \ref
{LEM(pointwise0)}, we can apply Lemma \ref{Lem(Omega)} with $\Omega =B_{R}$, 
$\Lambda =\Lambda _{\alpha ,\beta }\left( R\right) $, $\nu =\frac{%
2N-2-\gamma _{0}}{4}$ and 
\[
m=c_{0}\left( \frac{1}{\sqrt{\lambda \left( R\right) }}+\frac{R^{\frac{%
\gamma _{0}-2}{2}}}{\lambda \left( R\right) }\right) ^{\frac{1}{2}}\left\|
u\right\| . 
\]
If $\frac{1}{2}\leq \beta <1$, we get 
\begin{eqnarray*}
\int_{B_{R}}K\left( \left| x\right| \right) \left| u\right| ^{q-1}\left|
h\right| dx &\leq &\Lambda m^{q-2\beta }\left( \int_{\Omega }\left| x\right|
^{\frac{\alpha -\nu \left( q-2\beta \right) }{1-\beta }}dx\right) ^{1-\beta
}\left\| u\right\| ^{2\beta -1}\left\| h\right\| \\
&=&c_{0}^{q-2\beta }a\left( R\right) \left( \int_{B_{R}}\left| x\right| ^{%
\frac{4\alpha -\left( 2N-2-\gamma _{0}\right) \left( q-2\beta \right) }{%
4\left( 1-\beta \right) }}dx\right) ^{1-\beta }\left\| u\right\|
^{q-1}\left\| h\right\| \\
&\leq &c_{0}^{q-2\beta }a\left( R\right) \left( R^{\frac{4\alpha -\left(
2N-2-\gamma _{0}\right) \left( q-2\beta \right) }{4\left( 1-\beta \right) }%
+N}\right) ^{1-\beta }\left\| u\right\| ^{q-1}\left\| h\right\| ,
\end{eqnarray*}
since 
\[
\frac{4\alpha -\left( 2N-2-\gamma _{0}\right) \left( q-2\beta \right) }{%
4\left( 1-\beta \right) }+N=\frac{4\alpha +4N-2\left( \gamma _{0}+2\right)
\beta -\left( 2N-2-\gamma _{0}\right) q}{4\left( 1-\beta \right) }>0. 
\]
If instead we have $\beta =1$, we get 
\begin{eqnarray*}
&&\int_{B_{R}}K\left( \left| x\right| \right) \left| u\right| ^{q-1}\left|
h\right| dx \\
&\leq &\Lambda m^{q-2}\left( \int_{\Omega }\left| x\right|
^{2\alpha -2\nu \left( q-2\right) }V\left( \left| x\right| \right) \left|
u\right| ^{2}dx\right) ^{\frac{1}{2}}\left\| h\right\| \\
&=&c_{0}^{q-2}a\left( R\right) \left( \int_{B_{R}}\left| x\right| ^{\frac{%
4\alpha -\left( 2N-2-\gamma _{0}\right) \left( q-2\right) }{2}}V\left(
\left| x\right| \right) \left| u\right| ^{2}dx\right) ^{\frac{1}{2}}\left\|
u\right\| ^{q-2}\left\| h\right\| \\
&\leq &c_{0}^{q-2}a\left( R\right) \left( R^{\frac{4\alpha -\left(
2N-2-\gamma _{0}\right) \left( q-2\right) }{2}}\int_{B_{R}}V\left( \left|
x\right| \right) \left| u\right| ^{2}dx\right) ^{\frac{1}{2}}\left\|
u\right\| ^{q-2}\left\| h\right\| \\
&\leq &c_{0}^{q-2}a\left( R\right) R^{\frac{4\alpha -\left( 2N-2-\gamma
_{0}\right) \left( q-2\right) }{4}}\left\| u\right\| ^{q-1}\left\| h\right\|
,
\end{eqnarray*}
since 
\[
4\alpha -\left( 2N-2-\gamma _{0}\right) \left( q-2\right) =4\alpha
+4N-2\left( \gamma _{0}+2\right) -\left( 2N-2-\gamma _{0}\right) q>0. 
\]
%TCIMACRO{\TeXButton{End Proof}{\endproof}}
%BeginExpansion
\endproof%
%EndExpansion
%\bigskip

\begin{lem}
\label{Lem(zero0)}Assume that there exists $R_{1}>0$ such that (\ref{hp in 0}%
) and (\ref{stima in 0}) hold with $\gamma _{0}>2$ and let $q_{1}\in \mathbb{R}$
be such that $\left( \alpha _{0},q_{1}\right) \in \mathcal{A}_{\beta
_{0},\gamma _{0}}$. Then for every $0<R\leq R_{1}$ there exists $b\left(
R\right) >0$ such that $b\left( R\right) \rightarrow 0$ as $R\rightarrow
0^{+}$ and 
\[
\int_{B_{R}}K\left( \left| x\right| \right) \left| u\right| ^{q_{1}-1}\left|
h\right| dx\leq b\left( R\right) \left\| u\right\| ^{q_{1}-1}\left\|
h\right\|,
\quad 
\forall u\in H_{V,\mathrm{r}}^{1}\cap D_{0}^{1,2}\left( B_{R}\right),\ \forall h\in H_{V}^{1}.
%\text{for every }u\in H_{V,\mathrm{r}}^{1}\cap D_{0}^{1,2}\left( B_{R}\right) \ \text{and }h\in H_{V}^{1}.
\]
\end{lem}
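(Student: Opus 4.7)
The plan is to reduce the claim to Lemma~\ref{Lem(Omega0)} via a shifting trick enabled by the lower bound on $V$. Setting $\lambda := \essinf_{r \in (0,R_1)} r^{\gamma_0}V(r) > 0$ and $\Lambda_0 := \esssup_{r \in (0,R_1)} K(r)\, r^{-\alpha_0}V(r)^{-\beta_0}$, one has, for every $\xi \geq 0$ with $\beta_0 + \xi \leq 1$,
\[
\esssup_{r \in (0,R_1)} \frac{K(r)}{r^{\alpha_0 + \xi\gamma_0}\, V(r)^{\beta_0 + \xi}} = \esssup_{r \in (0,R_1)} \frac{K(r)}{r^{\alpha_0} V(r)^{\beta_0}} \cdot \frac{1}{(r^{\gamma_0} V(r))^\xi} \leq \frac{\Lambda_0}{\lambda^\xi} < +\infty.
\]
Hence hypothesis (\ref{Lem(Omega0): hp1}) of Lemma~\ref{Lem(Omega0)} is satisfied with the shifted parameters $(\alpha', \beta') := (\alpha_0 + \xi\gamma_0,\, \beta_0 + \xi)$, provided additionally that $\beta_0 + \xi \geq 1/2$.

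To apply Lemma~\ref{Lem(Omega0)} with $q = q_1$ we also need $q_1 > 2\beta'$ and $(2N-2-\gamma_0)\,q_1 < 4\alpha' + 4N - 2(\gamma_0+2)\beta'$. Since $\gamma_0 > 2$, expanding the latter as a linear function of $\xi$ rewrites it as $\xi > \xi_2$, where
\[
\xi_2 := \frac{(2N-2-\gamma_0)\,q_1 - 4\alpha_0 - 4N + 2(\gamma_0+2)\beta_0}{2(\gamma_0 - 2)},
\]
while $q_1 > 2\beta'$ reads $\xi < \xi_1 := (q_1 - 2\beta_0)/2$, and $\beta' \in [1/2, 1]$ means $\xi \in [\max\{0,\, 1/2 - \beta_0\},\, 1 - \beta_0]$. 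The proof thus reduces to showing that $(\xi_2, \xi_1) \cap [\max\{0, 1/2 - \beta_0\},\, 1 - \beta_0] \neq \emptyset$. A direct computation yields
\[
\xi_2 - (1-\beta_0) = \frac{(2N-2-\gamma_0)(q_1 - q_{**})}{2(\gamma_0-2)}, \qquad
\xi_2 - \xi_1 = \frac{(N-\gamma_0)(q_1 - q_*)}{\gamma_0 - 2},
\]
so the inequalities $\xi_2 < 1-\beta_0$ and $\xi_2 < \xi_1$ translate, case-by-case in $\gamma_0$, into exactly the $q_{**}$- and $q_*$-conditions appearing in the five-case definition (\ref{A:=}) of $\mathcal{A}_{\beta_0, \gamma_0}$ (with appropriate sign flips at $\gamma_0 = N$ and $\gamma_0 = 2N-2$); together with $q_1 > \max\{1, 2\beta_0\}$ (which ensures $\max\{0, 1/2-\beta_0\} < \xi_1$), these recover precisely the hypothesis $(\alpha_0, q_1) \in \mathcal{A}_{\beta_0, \gamma_0}$.

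Once such a $\xi$ is fixed, the prefactor $a(R)$ from Lemma~\ref{Lem(Omega0)} stays uniformly bounded for $R \in (0, R_1]$, since $\essinf_{r \in (0, R)} r^{\gamma_0}V(r) \geq \lambda > 0$ and $R^{(\gamma_0-2)/2}$ is bounded on $(0,R_1]$ (using $\gamma_0 > 2$). Lemma~\ref{Lem(Omega0)} then delivers
\[
\int_{B_R} K(|x|)\, |u|^{q_1-1}\, |h|\,dx \leq C\, R^\delta\, \|u\|^{q_1-1}\, \|h\|, \qquad \delta := \frac{4\alpha' + 4N - 2(\gamma_0+2)\beta' - (2N-2-\gamma_0)q_1}{4} > 0,
\]
with $C$ independent of $R$, $u$ and $h$; setting $b(R) := C R^\delta$ gives the claim, since $b(R) \to 0$ as $R \to 0^+$. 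The main obstacle is the algebraic bookkeeping in the second paragraph: one must patiently verify across the five regimes in the definition (\ref{A:=}) of $\mathcal{A}_{\beta_0, \gamma_0}$ that the conditions $\xi_2 < 1 - \beta_0$ and $\xi_2 < \xi_1$, re-expressed via the two identities above, reproduce exactly the membership in $\mathcal{A}_{\beta_0, \gamma_0}$.
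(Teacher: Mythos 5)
Your proof is correct and takes essentially the same route as the paper's: the same shift $\left( \alpha _{0},\beta _{0}\right) \mapsto \left( \alpha _{0}+\xi \gamma _{0},\beta _{0}+\xi \right) $ enabled by the lower bound on $r^{\gamma _{0}}V\left( r\right) $, followed by an application of Lemma \ref{Lem(Omega0)} with $\xi $ chosen in the admissible window $[\max \left\{ 0,\tfrac{1}{2}-\beta _{0}\right\} ,1-\beta _{0}]\cap \left( \xi _{2},\xi _{1}\right) $. The only difference is organizational: your two identities for $\xi _{2}-\left( 1-\beta _{0}\right) $ and $\xi _{2}-\xi _{1}$ (both of which check out) condense into a single computation the five case-by-case systems that the paper solves in its Appendix, with the degenerate cases $\gamma _{0}=N$ and $\gamma _{0}=2N-2$ handled, as you indicate, by evaluating the corresponding numerators directly.
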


%TCIMACRO{\TeXButton{Proof}{\proof}}
%BeginExpansion
\proof%
%EndExpansion
Denote 
\[
\Lambda _{0}:=\esssup_{r\in \left( 0,R_{1}\right) }\frac{K\left(
r\right) }{r^{\alpha _{0}}V\left( r\right) ^{\beta _{0}}}\quad \text{and}%
\quad \lambda _{0}:=\essinf_{r\in \left( 0,R_{1}\right) }r^{\gamma
_{0}}V\left( r\right) 
\]
and let $0<R\leq R_{1}$. Then 
\begin{equation}
\lambda \left( R\right) :=\essinf_{r\in \left( 0,R\right)
}r^{\gamma _{0}}V\left( r\right) \geq \lambda _{0}>0  \label{stimaETA1}
\end{equation}
and for every $\xi \geq 0$ we have 
\begin{eqnarray}
\Lambda _{\alpha _{0}+\xi \gamma _{0},\beta _{0}+\xi }\left( R\right) &:=&
\esssup_{r\in \left( 0,R\right) }\frac{K\left( r\right) }{r^{\alpha
_{0}+\xi \gamma _{0}}V\left( r\right) ^{\beta _{0}+\xi }}\leq 
\esssup_{r\in \left( 0,R_{1}\right) }\frac{K\left( r\right) }{r^{\alpha
_{0}}V\left( r\right) ^{\beta _{0}}\left( r^{\gamma _{0}}V\left( r\right)
\right) ^{\xi }}\nonumber \\
&\leq &\frac{\Lambda _{0}}{\lambda _{0}^{\xi }}<+\infty .
\label{stimaETA0}
\end{eqnarray}
Denoting $\alpha _{1}=\alpha _{1}\left( \beta _{0},\gamma _{0}\right) $, $%
\alpha _{2}=\alpha _{2}\left( \beta _{0}\right) $ and $\alpha _{3}=\alpha
_{3}\left( \beta _{0},\gamma _{0}\right) $, as defined in (\ref{alpha_i :=}%
), we will now distinguish five cases, which reflect the five definitions (%
\ref{A:=}) of the set $\mathcal{A}_{\beta _{0},\gamma _{0}}$. For the sake
of clarity, some computations will be displaced in the Appendix.\medskip

\noindent \emph{Case }$2<\gamma _{0}<N$\emph{.}\smallskip

\noindent In this case, $\left( \alpha _{0},q_{1}\right) \in \mathcal{A}_{\beta _{0},\gamma _{0}}$ means 
\[
\begin{tabular}{l}
$\alpha _{0}>\max \left\{ \alpha _{2},\alpha _{3}\right\}$\quad and \smallskip
\\
%\lefteqn 
$\max \left\{ 1,2\beta _{0}\right\} <q_{1}<\displaystyle\min \left\{ 2\frac{\alpha
_{0}-\beta _{0}\gamma _{0}+N}{N-\gamma _{0}},2\frac{2\alpha _{0}+\left(
1-2\beta _{0}\right) \gamma _{0}+2N-2}{2N-2-\gamma _{0}}\right\}$\nonumber
\end{tabular}
\]
and these conditions ensure that we can fix $\xi \geq 0$, independent of $R$
(and $u$ and $h$), in such a way that $\alpha =\alpha _{0}+\xi \gamma _{0}$
and $\beta =\beta _{0}+\xi $ satisfy 
\begin{equation}
\frac{1}{2}\leq \beta \leq 1\quad \text{and}\quad 2\beta <q_{1}<\frac{%
4\alpha +4N-2\left( \gamma _{0}+2\right) \beta }{2N-2-\gamma _{0}}
\label{Lem(zero): cond1}
\end{equation}
(see Appendix). Hence, by (\ref{stimaETA0}) and (\ref{stimaETA1}), we can
apply Lemma \ref{Lem(Omega0)} (with $q=q_{1}$), so that $\forall u\in H_{V,%
\mathrm{r}}^{1}\cap D_{0}^{1,2}\left( B_{R}\right) $ and $\forall h\in
H_{V}^{1}$ we get 
\[
\int_{B_{R}}K\left( \left| x\right| \right) \left| u\right| ^{q_{1}-1}\left|
h\right| dx\leq c_{0}^{q_{1}-2\beta }a\left( R\right) R^{\frac{4\alpha
+4N-2\left( \gamma _{0}+2\right) \beta -\left( 2N-2-\gamma _{0}\right) q_{1}%
}{4}}\left\| u\right\| ^{q_{1}-1}\left\| h\right\| . 
\]
This gives the result, since $R^{4\alpha +4N-2\left( \gamma _{0}+2\right)
\beta -\left( 2N-2-\gamma _{0}\right) q_{1}}\rightarrow 0$ as $R\rightarrow
0^{+}$ and 
\[
a\left( R\right) =\Lambda _{\alpha _{0}+\xi \gamma _{0},\beta _{0}+\xi
}\left( R\right) \left( \frac{1}{\sqrt{\lambda \left( R\right) }}+\frac{R^{%
\frac{\gamma _{0}-2}{2}}}{\lambda \left( R\right) }\right) ^{\frac{%
q_{1}-2\beta }{2}}\leq \frac{\Lambda _{0}}{\lambda _{0}^{\xi }}\left( \frac{1%
}{\sqrt{\lambda _{0}}}+\frac{R_{1}^{\frac{\gamma _{0}-2}{2}}}{\lambda _{0}}%
\right) ^{\frac{q_{1}-2\beta }{2}}. 
\]
%\smallskip

\noindent \emph{Case }$\gamma _{0}=N$. \smallskip

\noindent In this case, $\left( \alpha _{0},q_{1}\right) \in \mathcal{A}%
_{\beta _{0},\gamma _{0}}$ means 
\[
\alpha _{0}>\alpha _{1}\,\left( =\alpha _{2}=\alpha _{3}\right) \quad \text{%
and}\quad \max \left\{ 1,2\beta _{0}\right\} <q_{1}<2\frac{2\alpha
_{0}+\left( 1-2\beta _{0}\right) \gamma _{0}+2N-2}{2N-2-\gamma _{0}} 
\]
and these conditions still ensure that we can fix $\xi \geq 0$ in such a way
that $\alpha =\alpha _{0}+\xi \gamma _{0}$ and $\beta =\beta _{0}+\xi $
satisfy (\ref{Lem(zero): cond1}) (see Appendix), so that the result ensues
again by Lemma \ref{Lem(Omega0)}.\medskip

\noindent \emph{Case }$N<\gamma _{0}<2N-2$.\smallskip

\noindent In this case, $\left( \alpha _{0},q_{1}\right) \in \mathcal{A}%
_{\beta _{0},\gamma _{0}}$ means 
\[
\begin{tabular}{l}
$\alpha _{0}>\alpha _{1}$\quad and\smallskip\\
$\displaystyle\max \left\{ 1,2\beta _{0},2\frac{\alpha _{0}-\beta _{0}\gamma _{0}+N}{N-\gamma _{0}}\right\} <q_{1}
<\displaystyle 2\frac{2\alpha _{0}+\left( 1-2\beta _{0}\right) \gamma _{0}+2N-2}{2N-2-\gamma_{0}}$
\end{tabular}
\]
and the conclusion then follows as in the former cases (see
Appendix).\medskip

\noindent \emph{Case }$\gamma _{0}=2N-2$.\smallskip

\noindent In this case, $\left( \alpha _{0},q_{1}\right) \in \mathcal{A}%
_{\beta _{0},\gamma _{0}}$ means 
\[
\alpha _{0}>\alpha _{1}\quad \text{and}\quad \max \left\{ 1,2\beta _{0},2%
\frac{\alpha _{0}-\beta _{0}\gamma _{0}+N}{N-\gamma _{0}}\right\} <q_{1} 
\]
and these conditions ensure that we can fix $\xi \geq 0$ in such a way that $%
\alpha =\alpha _{0}+\xi \gamma _{0}$ and $\beta =\beta _{0}+\xi $ satisfy 
\[
\frac{1}{2}\leq \beta \leq 1,\quad q_{1}>2\beta \quad \text{and}\quad
0<2\alpha +2N-\left( \gamma _{0}+2\right) \beta 
\]
(see Appendix). The result then follows again from Lemma \ref{Lem(Omega0)}.\medskip

\noindent \emph{Case }$\gamma _{0}>2N-2$.\smallskip

\noindent In this case, $\left( \alpha _{0},q_{1}\right) \in \mathcal{A}%
_{\beta _{0},\gamma _{0}}$ means 
\[
q_{1}>\max \left\{ 1,2\beta _{0},2\frac{\alpha _{0}-\beta _{0}\gamma _{0}+N}{%
N-\gamma _{0}},2\frac{2\alpha _{0}+\left( 1-2\beta _{0}\right) \gamma
_{0}+2N-2}{2N-2-\gamma _{0}}\right\} 
\]
and this condition ensures that we can fix $\xi \geq 0$ in such a way that $%
\alpha =\alpha _{0}+\xi \gamma _{0}$ and $\beta =\beta _{0}+\xi $ satisfy 
\[
\frac{1}{2}\leq \beta \leq 1\quad \text{and}\quad q_{1}>\max \left\{ 2\beta
,2\frac{2\alpha +2N-\left( \gamma _{0}+2\right) \beta }{2N-2-\gamma _{0}}%
\right\} 
\]
(see Appendix). The result still follows from Lemma \ref{Lem(Omega0)}.%
%TCIMACRO{\TeXButton{End Proof}{\endproof}}
%BeginExpansion
\endproof%
%EndExpansion
%\bigskip

%\noindent \textbf{Proof of Theorem \ref{THM3}.}\quad 
\proof[Proof of Theorem \ref{THM3}]
Assume the hypotheses of the theorem and denote 
\[
\lambda _{0}:=\essinf_{r\in (0,R_{1})}r^{\gamma _{0}}V\left(
r\right) . 
\]
If $\gamma _{0}=2$ the thesis of the theorem is true by Theorem \ref{THM0}
(see Remark \ref{RMK: Hardy 2}.\ref{RMK: Hardy 2-improve}), whence we can
assume $\gamma _{0}>2$ without restriction. Let $u\in H_{V,\mathrm{r}}^{1}$
and $h\in H_{V}^{1}$ be such that $\left\| u\right\| =\left\| h\right\| =1$.
Let $R\leq R_{1}$ and observe that $\forall x\in B_{R}$ we have 
\begin{equation}
1\leq \frac{1}{\lambda _{0}}\left| x\right| ^{\gamma _{0}}V\left( \left|
x\right| \right) \leq \frac{R^{\gamma _{0}}}{\lambda _{0}}V\left( \left|
x\right| \right) .  \label{THM3-pf: magg}
\end{equation}
Take a function $\varphi \in C^{\infty }\left( \mathbb{R}\right) $ such that $%
0\leq \varphi \leq 1$ on $\mathbb{R}$, $\varphi \left( r\right) =1$ for $r\leq
1 $ and $\varphi \left( r\right) =0$ for $r\geq 2$. Define $M:=\max_{\mathbb{R}%
}\left| \varphi ^{\prime }\right| $ and $\phi _{R}\left( x\right) :=\varphi
\left( 2\left| x\right| /R\right) $, in such a way that $\left| \nabla \phi
_{R}\right| \leq 2M/R$ on $\mathbb{R}^{N}$. Then $\phi _{R}u\in H_{V,\mathrm{r}%
}^{1}\cap D_{0}^{1,2}\left( B_{R}\right) $ and 
{\allowdisplaybreaks
\begin{eqnarray*}
\left\| \phi _{R}u\right\| ^{2} &=&\int_{B_{R}}\left( \left| \nabla \left(
\phi _{R}u\right) \right| ^{2}+V\left( \left| x\right| \right) \left| \phi
_{R}u\right| ^{2}\right) dx\\
&\leq & \int_{B_{R}}\left( 2\left| \nabla u\right|
^{2}+2\left| \nabla \phi _{R}\right| ^{2}\left| u\right| ^{2}+V\left( \left|
x\right| \right) \left| u\right| ^{2}\right) dx \\
&\leq &2\int_{B_{R}}\left( \left| \nabla u\right| ^{2}+\frac{4M^{2}}{R^{2}}%
\left| u\right| ^{2}+V\left( \left| x\right| \right) \left| u\right|
^{2}\right) dx \\
&\leq &2\int_{B_{R}}\left( \left| \nabla u\right| ^{2}+4M^{2}\frac{R^{\gamma
_{0}-2}}{\lambda _{0}}V\left( \left| x\right| \right) \left| u\right|
^{2}+V\left( \left| x\right| \right) \left| u\right| ^{2}\right) dx\\
&\leq & 2\left( 1+4M^{2}\frac{R_{1}^{\gamma _{0}-2}}{\lambda _{0}}\right) ,
\end{eqnarray*}
}
where (\ref{THM3-pf: magg}) has been used. Hence, by Lemma \ref{Lem(zero0)},
we get 
\begin{eqnarray*}
\int_{B_{R/2}}K\left( \left| x\right| \right) \left| u\right|
^{q_{1}-1}\left| h\right| dx &\leq& \int_{B_{R}}K\left( \left| x\right| \right)
\left| \phi _{R}u\right| ^{q_{1}-1}\left| h\right| dx\leq b\left( R\right)
\left\| \phi _{R}u\right\| ^{q_{1}-1}\\
&\leq & b\left( R\right) \left( 2+8M^{2}\frac{R_{1}^{\gamma _{0}-2}}{\lambda _{0}}\right) 
\end{eqnarray*}
where $b\left( R\right) \rightarrow 0$ as $R\rightarrow 0^{+}$. We thus
conclude that $\displaystyle \lim_{R\rightarrow 0^{+}}\mathcal{R}_{0}\left(
q_{1},\frac{R}{2}\right) =0$, which is equivalent to the thesis of the
theorem.%
%TCIMACRO{\TeXButton{End Proof}{\endproof}}
%BeginExpansion
\endproof%
%EndExpansion

\section{Appendix}

This Appendix is devoted to complete the computations of the proof of Lemma 
\ref{Lem(zero0)}. We still distinguish the same cases considered there.\medskip

\noindent \emph{Case }$2<\gamma _{0}<N$.\smallskip

\noindent In this case, $\left( \alpha _{0},q_{1}\right) \in \mathcal{A}%
_{\beta _{0},\gamma _{0}}$ means 
\[
\begin{tabular}{l}
$\alpha _{0}>\max \left\{ \alpha _{2},\alpha _{3}\right\}$ \quad and \smallskip\\
$\max \left\{ 1,2\beta _{0}\right\} <q_{1}<\displaystyle\min \left\{ 2\frac{\alpha
_{0}-\beta _{0}\gamma _{0}+N}{N-\gamma _{0}},2\frac{2\alpha _{0}+\left(
1-2\beta _{0}\right) \gamma _{0}+2N-2}{2N-2-\gamma _{0}}\right\}.$
\end{tabular}
\]
This ensures that we can find $\xi \geq 0$ such that 
\[
\frac{1}{2}\leq \beta _{0}+\xi \leq 1\quad \text{and}\quad 2\left( \beta
_{0}+\xi \right) <q_{1}<\frac{4\left( \alpha _{0}+\xi \gamma _{0}\right)
+4N-2\left( \gamma _{0}+2\right) \left( \beta _{0}+\xi \right) }{2N-2-\gamma
_{0}}, 
\]
i.e., 
\[
\frac{1}{2}-\beta _{0}\leq \xi \leq 1-\beta _{0}\quad \text{and}\quad 2\beta
_{0}+2\xi <q_{1}<2\frac{\gamma _{0}-2}{2N-2-\gamma _{0}}\xi +\frac{4\alpha
_{0}+4N-2\left( \gamma _{0}+2\right) \beta _{0}}{2N-2-\gamma _{0}}. 
\]
Indeed, this amounts to find $\xi $ such that 
\[
\left\{ 
\begin{array}{l}
\max \left\{ 0,\frac{1-2\beta _{0}}{2}\right\} \leq \xi \leq 1-\beta
_{0}\medskip \\ 
\xi <\frac{q_{1}-2\beta _{0}}{2}\medskip \\ 
q_{1}-\frac{4\alpha _{0}+4N-2\left( \gamma _{0}+2\right) \beta _{0}}{%
2N-2-\gamma _{0}}<2\frac{\gamma _{0}-2}{2N-2-\gamma _{0}}\xi ,
\end{array}
\right. 
\]
which, since $\frac{\gamma _{0}-2}{2N-2-\gamma _{0}}>0$, is equivalent to 
\[
\left\{ 
\begin{array}{l}
\max \left\{ 0,\frac{1-2\beta _{0}}{2}\right\} \leq \xi \leq 1-\beta
_{0}\medskip \\ 
q_{1}\frac{2N-2-\gamma _{0}}{2\left( \gamma _{0}-2\right) }-\frac{2\alpha
_{0}+2N-\left( \gamma _{0}+2\right) \beta _{0}}{\gamma _{0}-2}<\xi <\frac{%
q_{1}-2\beta _{0}}{2}.
\end{array}
\right. 
\]
Since $\frac{1}{2}-\beta _{0}\leq 1-\beta _{0}$ is obvious and $1-\beta
_{0}\geq 0$ holds by assumption, such a system has a solution $\xi $ if and
only if 
\[
\left\{ 
\begin{array}{l}
\frac{1-2\beta _{0}}{2}<\frac{q_{1}-2\beta _{0}}{2}\medskip \\ 
q_{1}\frac{2N-2-\gamma _{0}}{2\left( \gamma _{0}-2\right) }-\frac{2\alpha
_{0}+2N-\left( \gamma _{0}+2\right) \beta _{0}}{\gamma _{0}-2}<1-\beta
_{0}\medskip \\ 
q_{1}\frac{2N-2-\gamma _{0}}{2\left( \gamma _{0}-2\right) }-\frac{2\alpha
_{0}+2N-\left( \gamma _{0}+2\right) \beta _{0}}{\gamma _{0}-2}<\frac{%
q_{1}-2\beta _{0}}{2}\medskip \\ 
\frac{q_{1}-2\beta _{0}}{2}>0,
\end{array}
\right. 
\]
which is equivalent to 
\[
\left\{ 
\begin{array}{l}
1<q_{1}\medskip \\ 
q_{1}<2\frac{2\alpha _{0}+2N+\left( 1-2\beta _{0}\right) \gamma _{0}-2}{%
2N-2-\gamma _{0}}\medskip \\ 
\frac{q_{1}}{2}<\frac{\alpha _{0}+N-\gamma _{0}\beta _{0}}{N-\gamma _{0}}%
\medskip \\ 
q_{1}>2\beta _{0}.
\end{array}
\right. 
\]

\noindent \emph{Case }$\gamma _{0}=N$.\smallskip

\noindent In this case, $\left( \alpha _{0},q_{1}\right) \in \mathcal{A}%
_{\beta _{0},\gamma _{0}}$ means 
\[
\begin{tabular}{l}
$\alpha _{0}>\alpha _{1}\,\left( =\alpha _{2}=\alpha _{3}\right)$\quad and \smallskip\\
$\max \left\{ 1,2\beta _{0}\right\} <q_{1}<\displaystyle
2\frac{2\alpha_{0}+\left( 1-2\beta _{0}\right) \gamma _{0}+2N-2}{2N-2-\gamma _{0}}=2\frac{%
2\alpha _{0}+3N-2\beta _{0}N-2}{N-2}$ 
\end{tabular}
\]
and this ensures that we can find $\xi \geq 0$ such that 
\[
\frac{1}{2}\leq \beta _{0}+\xi \leq 1\quad \text{and}\quad 2\left( \beta
_{0}+\xi \right) <q_{1}<\frac{4\left( \alpha _{0}+\xi \gamma _{0}\right)
+4N-2\left( \gamma _{0}+2\right) \left( \beta _{0}+\xi \right) }{2N-2-\gamma
_{0}}, 
\]
i.e., 
\[
\frac{1}{2}-\beta _{0}\leq \xi \leq 1-\beta _{0}\quad \text{and}\quad 2\beta
_{0}+2\xi <q_{1}<2\xi +\frac{4\alpha _{0}+4N-2\left( N+2\right) \beta _{0}}{%
N-2}. 
\]
Indeed, this amounts to find $\xi $ such that 
\[
\left\{ 
\begin{array}{l}
\max \left\{ 0,\frac{1-2\beta _{0}}{2}\right\} \leq \xi \leq 1-\beta
_{0}\medskip \\ 
\frac{q_{1}}{2}-\frac{2\alpha _{0}+2N-\left( N+2\right) \beta _{0}}{N-2}<\xi
<\frac{q_{1}-2\beta _{0}}{2},
\end{array}
\right. 
\]
which has a solution $\xi $ if and only if 
\[
\left\{ 
\begin{array}{l}
\frac{1-2\beta _{0}}{2}<\frac{q_{1}-2\beta _{0}}{2}\medskip \\ 
\frac{q_{1}}{2}-\frac{2\alpha _{0}+2N-\left( N+2\right) \beta _{0}}{N-2}%
<1-\beta _{0}\medskip \\ 
\frac{q_{1}}{2}-\frac{2\alpha _{0}+2N-\left( N+2\right) \beta _{0}}{N-2}<%
\frac{q_{1}-2\beta _{0}}{2}\medskip \\ 
0<\frac{q_{1}-2\beta _{0}}{2}.
\end{array}
\right. 
\]
These conditions are equivalent to 
\[
\left\{ 
\begin{array}{l}
1<q_{1}\medskip \\ 
\frac{q_{1}}{2}<\frac{2\alpha _{0}+3N-2-2\beta _{0}N}{N-2}\medskip \\ 
0<\frac{2\alpha _{0}+2N-\left( N+2\right) \beta _{0}}{N-2}-\beta _{0}=2\frac{%
\alpha _{0}+N\left( 1-\beta _{0}\right) }{N-2}=2\frac{\alpha _{0}-\alpha _{1}%
}{N-2}\medskip \\ 
2\beta _{0}<q_{1}.
\end{array}
\right. 
\]
%\bigskip

\noindent \emph{Case }$N<\gamma _{0}<2N-2$.\smallskip 

\noindent In this case, $\left( \alpha _{0},q_{1}\right) \in \mathcal{A}%
_{\beta _{0},\gamma _{0}}$ means 
\[
\begin{tabular}{l}
$\alpha _{0}>\alpha _{1}$\quad and \smallskip\\
$\displaystyle\max \left\{ 1,2\beta _{0},2\frac{\alpha _{0}-\beta _{0}\gamma _{0}+N}{N-\gamma _{0}}\right\} <q_{1}<2%
\frac{2\alpha _{0}+\left( 1-2\beta _{0}\right) \gamma _{0}+2N-2}{2N-2-\gamma_{0}}$
\end{tabular}
\]
and these conditions ensure that we can find $\xi \geq 0$ such that 
\[
\frac{1}{2}\leq \beta _{0}+\xi \leq 1\quad \text{and}\quad 2\left( \beta
_{0}+\xi \right) <q_{1}<\frac{4\left( \alpha _{0}+\xi \gamma _{0}\right)
+4N-2\left( \gamma _{0}+2\right) \left( \beta _{0}+\xi \right) }{2N-2-\gamma
_{0}}, 
\]
i.e., 
\[
\frac{1}{2}-\beta _{0}\leq \xi \leq 1-\beta _{0}\quad \text{and}\quad 2\beta
_{0}+2\xi <q_{1}<2\frac{\gamma _{0}-2}{2N-2-\gamma _{0}}\xi +\frac{4\alpha
_{0}+4N-2\left( \gamma _{0}+2\right) \beta _{0}}{2N-2-\gamma _{0}}. 
\]
Indeed, this is equivalent to find $\xi $ such that 
\[
\left\{ 
\begin{array}{l}
\max \left\{ 0,\frac{1-2\beta _{0}}{2}\right\} \leq \xi \leq 1-\beta
_{0}\medskip \\ 
\xi <\frac{q_{1}-2\beta _{0}}{2}\medskip \\ 
q_{1}-\frac{4\alpha _{0}+4N-2\left( \gamma _{0}+2\right) \beta _{0}}{%
2N-2-\gamma _{0}}<2\frac{\gamma _{0}-2}{2N-2-\gamma _{0}}\xi ,
\end{array}
\right. 
\]
which, since $\frac{\gamma _{0}-2}{2N-2-\gamma _{0}}>0$, amounts to 
\[
\left\{ 
\begin{array}{l}
\max \left\{ 0,\frac{1-2\beta _{0}}{2}\right\} \leq \xi \leq 1-\beta
_{0}\medskip \\ 
\xi <\frac{q_{1}-2\beta _{0}}{2}\medskip \\ 
q_{1}\frac{2N-2-\gamma _{0}}{2\left( \gamma _{0}-2\right) }-\frac{2\alpha
_{0}+2N-\left( \gamma _{0}+2\right) \beta _{0}}{\gamma _{0}-2}<\xi .
\end{array}
\right. 
\]
Such a system has a solution $\xi $ if and only if 
\[
\left\{ 
\begin{array}{l}
0<\frac{q_{1}-2\beta _{0}}{2}\medskip \\ 
\frac{1-2\beta _{0}}{2}<\frac{q_{1}-2\beta _{0}}{2}\medskip \\ 
q_{1}\frac{2N-2-\gamma _{0}}{2\left( \gamma _{0}-2\right) }-\frac{2\alpha
_{0}+2N-\left( \gamma _{0}+2\right) \beta _{0}}{\gamma _{0}-2}<1-\beta
_{0}\medskip \\ 
q_{1}\frac{2N-2-\gamma _{0}}{2\left( \gamma _{0}-2\right) }-\frac{2\alpha
_{0}+2N-\left( \gamma _{0}+2\right) \beta _{0}}{\gamma _{0}-2}<\frac{%
q_{1}-2\beta _{0}}{2},
\end{array}
\right. 
\]
which is equivalent to 
\[
\left\{ 
\begin{array}{l}
q_{1}>2\beta _{0}\medskip \\ 
q_{1}>1\medskip \\ 
q_{1}<2\frac{2\alpha _{0}+2N-2\beta _{0}\gamma _{0}+\gamma _{0}-2}{%
2N-2-\gamma _{0}}\medskip \\ 
-2\frac{\alpha _{0}+N-\beta _{0}\gamma _{0}}{\gamma _{0}-2}<q_{1}\frac{%
\gamma _{0}-N}{\gamma _{0}-2}.
\end{array}
\right. 
\]
%\bigskip

\noindent \emph{Case }$\gamma _{0}=2N-2$\emph{.\smallskip }

\noindent In this case, $\left( \alpha _{0},q_{1}\right) \in \mathcal{A}%
_{\beta _{0},\gamma _{0}}$ means 
\[
\alpha _{0}>\alpha _{1}\quad \text{and}\quad q_{1}>\max \left\{ 1,2\beta
_{0},-2\frac{\alpha _{0}-2\left( N-1\right) \beta _{0}+N}{N-2}\right\} . 
\]
This ensures that we can find $\xi \geq 0$ such that 
\[
\frac{1}{2}\leq \beta _{0}+\xi \leq 1,\quad q_{1}>2\left( \beta _{0}+\xi
\right) \quad \text{and}\quad 2\left( \alpha _{0}+\xi \gamma _{0}\right)
+2N-\left( \gamma _{0}+2\right) \left( \beta _{0}+\xi \right) >0, 
\]
i.e., 
\[
\frac{1}{2}-\beta _{0}\leq \xi \leq 1-\beta _{0},\quad q_{1}>2\beta
_{0}+2\xi \quad \text{and}\quad \alpha _{0}+N\left( 1-\beta _{0}\right)
+\left( N-2\right) \xi >0. 
\]
Indeed, this amounts to find $\xi $ such that 
\[
\left\{ 
\begin{array}{l}
\max \left\{ 0,\frac{1-2\beta _{0}}{2}\right\} \leq \xi \leq 1-\beta
_{0}\medskip \\ 
-\frac{\alpha _{0}+N\left( 1-\beta _{0}\right) }{N-2}<\xi <\frac{%
q_{1}-2\beta _{0}}{2}
\end{array}
\right. 
\]
and such a system has a solution $\xi $ if and only if 
\[
\left\{ 
\begin{array}{l}
\frac{1-2\beta _{0}}{2}<\frac{q_{1}-2\beta _{0}}{2}\medskip \\ 
-\frac{\alpha _{0}+N\left( 1-\beta _{0}\right) }{N-2}<1-\beta _{0}\medskip
\\ 
-\frac{\alpha _{0}+N\left( 1-\beta _{0}\right) }{N-2}<\frac{q_{1}-2\beta _{0}%
}{2}\medskip \\ 
0<\frac{q_{1}-2\beta _{0}}{2},
\end{array}
\right. 
\]
which means 
\[
\left\{ 
\begin{array}{l}
1<q_{1}\medskip \\ 
\alpha _{0}>-\left( 2N-2\right) \left( 1-\beta _{0}\right) =\alpha
_{1}\medskip \\ 
\frac{q_{1}}{2}>\beta _{0}-\frac{\alpha _{0}+N\left( 1-\beta _{0}\right) }{%
N-2}=-\frac{\alpha _{0}+N-2\left( N-1\right) \beta _{0}}{N-2}\medskip \\ 
2\beta _{0}<q_{1}.
\end{array}
\right. 
\]
%\bigskip

\noindent \emph{Case }$\gamma _{0}>2N-2$\emph{.\smallskip }

\noindent In this case, $\left( \alpha _{0},q_{1}\right) \in \mathcal{A}%
_{\beta _{0},\gamma _{0}}$ means 
\[
q_{1}>\max \left\{ 1,2\beta _{0},2\frac{\alpha _{0}-\beta _{0}\gamma _{0}+N}{%
N-\gamma _{0}},2\frac{2\alpha _{0}+\left( 1-2\beta _{0}\right) \gamma
_{0}+2N-2}{2N-2-\gamma _{0}}\right\} 
\]
and this condition ensures that we can find $\xi \geq 0$ such that 
\[
\frac{1}{2}\leq \beta _{0}+\xi \leq 1\quad \text{and}\quad q_{1}>2\max
\left\{ \beta _{0}+\xi ,\frac{2\left( \alpha _{0}+\xi \gamma _{0}\right)
+2N-\left( \gamma _{0}+2\right) \left( \beta _{0}+\xi \right) }{2N-2-\gamma
_{0}}\right\} , 
\]
which amounts to find $\xi $ such that 
\begin{equation}
\left\{ 
\begin{array}{l}
\max \left\{ 0,\frac{1-2\beta _{0}}{2}\right\} \leq \xi \leq 1-\beta
_{0}\medskip \\ 
\frac{q_{1}}{2}>\max \left\{ \beta _{0}+\xi ,\frac{\gamma _{0}-2}{%
2N-2-\gamma _{0}}\xi +\frac{2\alpha _{0}+2N-\left( \gamma _{0}+2\right)
\beta _{0}}{2N-2-\gamma _{0}}\right\} .
\end{array}
\right.  \label{ineq}
\end{equation}
In order to check this, we take into account that $\gamma _{0}>2N-2$ implies 
$\gamma _{0}>N$, and observe that 
\[
\beta _{0}+\xi =\frac{\gamma _{0}-2}{2N-2-\gamma _{0}}\xi +\frac{2\alpha
_{0}+2N-\left( \gamma _{0}+2\right) \beta _{0}}{2N-2-\gamma _{0}}%
\Longleftrightarrow \xi =\frac{\alpha _{0}+\left( 1-\beta _{0}\right) N}{%
N-\gamma _{0}}. 
\]
Accordingly, we distinguish three subcases:$\medskip $

\noindent (I) $\frac{\alpha _{0}+\left( 1-\beta _{0}\right) N}{N-\gamma _{0}}%
\geq 1-\beta _{0}$, i.e., $\alpha _{0}\leq -\gamma _{0}\left( 1-\beta
_{0}\right) =\alpha _{1}$;$\medskip $

\noindent (II) $\frac{\alpha _{0}+\left( 1-\beta _{0}\right) N}{N-\gamma _{0}%
}\leq \max \left\{ 0,\frac{1-2\beta _{0}}{2}\right\} $, i.e., 
\[
\alpha _{0}+\left( 1-\beta _{0}\right) N\geq \left( N-\gamma _{0}\right)
\max \left\{ 0,\frac{1-2\beta _{0}}{2}\right\} =\min \left\{ 0,\left(
N-\gamma _{0}\right) \frac{1-2\beta _{0}}{2}\right\} , 
\]
i.e., 
\[
\alpha _{0}\geq \min \left\{ 0,\left( N-\gamma _{0}\right) \frac{1-2\beta
_{0}}{2}\right\} -\left( 1-\beta _{0}\right) N=\min \left\{ \alpha
_{2},\alpha _{3}\right\} ; 
\]

\noindent (III) $\max \left\{ 0,\frac{1-2\beta _{0}}{2}\right\} <\frac{%
\alpha _{0}+\left( 1-\beta _{0}\right) N}{N-\gamma _{0}}<1-\beta _{0}$,
i.e., $\alpha _{1}<\alpha _{0}<\min \left\{ \alpha _{2},\alpha _{3}\right\}
.\medskip $

\noindent \emph{Subcase (I).\smallskip }

\noindent Since $\xi \leq 1-\beta _{0}$ implies 
$
\max \left\{ \beta _{0}+\xi ,\frac{\gamma _{0}-2}{2N-2-\gamma _{0}}\xi +%
\frac{2\alpha _{0}+2N-\left( \gamma _{0}+2\right) \beta _{0}}{2N-2-\gamma
_{0}}\right\}=\frac{\gamma _{0}-2}{2N-2-\gamma _{0}}\xi +\frac{2\alpha
_{0}+2N-\left( \gamma _{0}+2\right) \beta _{0}}{2N-2-\gamma _{0}}, 
$
the inequalities (\ref{ineq}) become 
\[
\left\{ 
\begin{array}{l}
\max \left\{ 0,\frac{1-2\beta _{0}}{2}\right\} \leq \xi \leq 1-\beta
_{0}\medskip \\ 
\frac{q_{1}}{2}>\frac{\gamma _{0}-2}{2N-2-\gamma _{0}}\xi +\frac{2\alpha
_{0}+2N-\left( \gamma _{0}+2\right) \beta _{0}}{2N-2-\gamma _{0}},
\end{array}
\right. 
\]
i.e., 
\[
\left\{ 
\begin{array}{l}
\max \left\{ 0,\frac{1-2\beta _{0}}{2}\right\} \leq \xi \leq 1-\beta
_{0}\medskip \\ 
q_{1}\frac{2N-2-\gamma _{0}}{2\left( \gamma _{0}-2\right) }-\frac{4\alpha
_{0}+4N-2\left( \gamma _{0}+2\right) \beta _{0}}{2\left( \gamma
_{0}-2\right) }<\xi ,
\end{array}
\right. 
\]
which, since $\max \left\{ 0,\frac{1-2\beta _{0}}{2}\right\} \leq 1-\beta
_{0}$ is clearly true, has a solution $\xi $ if and only if 
\[
q_{1}\frac{2N-2-\gamma _{0}}{2\left( \gamma _{0}-2\right) }-\frac{4\alpha
_{0}+4N-2\left( \gamma _{0}+2\right) \beta _{0}}{2\left( \gamma
_{0}-2\right) }<1-\beta _{0}, 
\]
i.e., 
\[
q_{1}>\frac{4\alpha _{0}+4N-2\left( \gamma _{0}+2\right) \beta _{0}+2\left(
\gamma _{0}-2\right) \left( 1-\beta _{0}\right) }{2N-2-\gamma _{0}}=2\frac{%
2\alpha _{0}+2N-2+\gamma _{0}\left( 1-2\beta _{0}\right) }{2N-2-\gamma _{0}}%
. 
\]
%\medskip 

\noindent \emph{Subcase (II).\smallskip }

\noindent Since $\xi \geq \max \left\{ 0,\frac{1-2\beta _{0}}{2}\right\} $
implies $\max \left\{ \beta _{0}+\xi ,\frac{\gamma _{0}-2}{2N-2-\gamma _{0}}%
\xi +\frac{2\alpha _{0}+2N-\left( \gamma _{0}+2\right) \beta _{0}}{%
2N-2-\gamma _{0}}\right\} =\beta _{0}+\xi $, the inequalities (\ref{ineq})
become 
\[
\left\{ 
\begin{array}{l}
\max \left\{ 0,\frac{1-2\beta _{0}}{2}\right\} \leq \xi \leq 1-\beta
_{0}\medskip \\ 
\xi <\frac{q_{1}}{2}-\beta _{0},
\end{array}
\right. 
\]
which has a solution $\xi $ if and only if $\max \left\{ 0,\frac{1-2\beta
_{0}}{2}\right\} \leq \frac{q_{1}}{2}-\beta _{0}$, i.e., $q_{1}>\max \left\{
1,2\beta _{0}\right\} $.$\medskip $

\noindent \emph{Subcase (III).\smallskip }

\noindent We take $\xi =\frac{\alpha _{0}+\left( 1-\beta _{0}\right) N}{%
N-\gamma _{0}}$ and thus the inequalities (\ref{ineq}) are equivalent just
to 
\[
\frac{q_{1}}{2}>\beta _{0}+\frac{\alpha _{0}+\left( 1-\beta _{0}\right) N}{%
N-\gamma _{0}}=\frac{\alpha _{0}+N-\gamma _{0}\beta _{0}}{N-\gamma _{0}}. 
\]

\end{document}